\newcommand*{\C}{\ensuremath{\mathbb C}\xspace}
\newcommand*{\E}{\ensuremath{\mathcal E}\xspace}
\newcommand*{\Ll}{\ensuremath{\mathcal L}\xspace}
\newcommand*{\Oo}{\ensuremath{\mathcal O}\xspace}
\newcommand*{\PP}{\ensuremath{\mathbb P}\xspace}
\newcommand*{\Q}{\ensuremath{\mathbb Q}\xspace}
\newcommand*{\T}{\ensuremath{\mathcal T}\xspace}
\newcommand*{\U}{\ensuremath{\mathcal U}\xspace}
\newcommand*{\V}{\ensuremath{\mathcal V}\xspace}
\newcommand*{\Z}{\ensuremath{\mathbb Z}\xspace}
\DeclareMathOperator{\Aut}{Aut}
\DeclareMathOperator{\ev}{ev}
\DeclareMathOperator{\Ev}{Ev}
\DeclareMathOperator{\GL}{GL}
\DeclareMathOperator{\Ker}{Ker}
\DeclareMathOperator{\Pic}{Pic}
\DeclareMathOperator{\rank}{rank}
\DeclareMathOperator{\Sym}{Sym}
\let\ph\varphi
\newcommand*{\lst}[3][1]{\ensuremath{#2_{#1}, \ldots, #2_{#3}}\xspace}
\newtheorem{proposition}{Proposition}[section]
\newtheorem{theorem}[proposition]{Theorem}
\newtheorem{lemma}[proposition]{Lemma}
\theoremstyle{remark}
\newtheorem{note}[proposition]{Remark}
\theoremstyle{definition}
\newtheorem{definition}[proposition]{Definition}
\newtheorem{Not}[proposition]{Notation}
\author{Serge Lvovski}
\address{National Research University Higher School of Economics,
  Moscow, Russia\hfil\break
Federal State Institution ``Scientific-Research Institute for
System Analysis of the Russian Academy of Sciences'' (SRISA),
Nakhimovskii pr. 36--1, Moscow, 117218 Russia}
\email{lvovski@gmail.com}
\title[Threefolds with the smallest monodromy group]{On threefolds with
  the smallest nontrivial monodromy group}  
\keywords{Vanishing cycles, monodromy, Picard--Lefschetz theory, adjunction} 
\subjclass{14D05, 14N30} 
\thanks{This study was supported in part by SRISA research project No. 0580-2021-0007
  (Reg. No. 121031300051--3).} 
\begin{document}

\begin{abstract}
Using an adjunction-theoretic result due to A.~J.~Sommese together
with a proposition from SGA7, we obtain a complete list of smooth
threefolds for which the monodromy group acting on $H^2$ of its smooth
hyperplane section is~$\Z/2\Z$. The possibility of such a
classification was announced by F.~L.~Zak in 1991.
\end{abstract}

\maketitle

\section{Introduction}

Suppose that $X\subset\PP^N$ is a smooth projective variety of
dimension~$n$ (throughout the paper, we will assume that the base
field is \C and $X$ is not a linear subspace of $\PP^N$) and that
$Y\subset X$ is its smooth hyperplane section. As $Y$ varies in the
family of smooth hyperplane sections of $X$, a monodromy action on
$H^{n-1}(Y,\Q)$ arises; its image in $\GL(H^{n-1}(Y,\Q))$ will be
called \emph{monodromy group} of~$X$. 

If $\dim X$ is odd, this group is trivial if and only if the dual
variety $X^*\subset (\PP^N)^*$ is not a hypersurface (indeed, if $X^*$
is not a hypersurface, then $\pi_1((\PP^N)^*\setminus X^*)$ is
trivial, and if $X^*$ is a hypersurface and $\dim X$ is odd then the
monodromy group contains reflections in non-zero vanishing cycles;
see~\cite[6.3.3]{Lamotke}).  It is known that, for a smooth
threefold~$X$, its dual $X^*$ is not a hypersurface if and only if $X$
is a $\PP^2$-scroll over a curve (see \cite[Theorem 3.2]{Ein1}). In
this paper we obtain a complete description of the next natural class
of threefolds, that is, of those with monodromy group~$\Z/2\Z$.

\begin{theorem}\label{3folds}
Suppose that $X\subset\PP^N$ is a smooth projective threefold. Then
the monodromy group of $X$ is isomorphic to $\Z/2\Z$ if and only if
$X$ is one of the following varieties:
\begin{enumerate}
\item the quadric $Q\subset\PP^4$;

\item the Veronese variety $v_2(\PP^3)\subset\PP^9$
or its isomorphic projection; 

\item
the blowup of $\PP^3$ at a point, embedded in $\PP^8$
  by the complete linear system $|2H-E|$, where $H$ is the preimage of
  a plane in $\PP^3$ and $E$ is the exceptional divisor, or an isomorphic
  projection of this variety;

\item the Segre variety
$\PP^1\times\PP^1\times\PP^1\subset\PP^7$.
\end{enumerate}
\end{theorem}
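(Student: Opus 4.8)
The plan is to reduce the monodromy condition to a geometric/numerical condition via Picard–Lefschetz theory, and then invoke classification results from adjunction theory. Since $\dim X = 3$ is odd, the relevant cohomology group is $H^2$ of a smooth hyperplane section $Y$ (a surface), and the monodromy group is generated by the Picard–Lefschetz reflections $x\mapsto x\pm\langle x,\delta\rangle\delta$ in the vanishing cycles $\delta$. The monodromy group being $\Z/2\Z$ forces (i) the dual variety $X^*$ to be a hypersurface, so that at least one nonzero vanishing cycle exists, and (ii) all vanishing cycles, under the monodromy action, to span a one-dimensional subspace (more precisely, they must all be proportional as classes in $H^2(Y,\Q)$, since otherwise two non-proportional vanishing cycles already generate a non-abelian reflection group, or at least a group strictly larger than $\Z/2\Z$). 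The appropriate technical input here is the cited proposition from SGA7 governing the span of vanishing cycles and the irreducibility of the monodromy action on the vanishing cohomology: the vanishing cycles form a single orbit, so if they are all proportional then in fact the vanishing cohomology is one-dimensional. Thus $e := \dim(\text{vanishing cohomology of }Y) = 1$, equivalently $b_2(Y) - b_2(X) $ (the right comparison of Betti/Hodge numbers, accounting for the Lefschetz hyperplane theorem in the other degrees) equals $1$.

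Next I would translate ``$e=1$'' into an inequality on the degree of the dual variety and then into adjunction-theoretic data. The class of $X^*$ (when it is a hypersurface) has degree expressible via Chern numbers of $X$ and the hyperplane class $h$; more usefully, the dimension of the vanishing cohomology of $Y$ equals the number of singular members in a general Lefschetz pencil, which is $\deg X^*$, and this is also computed by a standard formula $\deg X^* = \sum_{i} (i+1)\, d_i$-type expression, or directly in terms of $c_1, c_2, c_3$ of $X$ and $h = c_1(\Oo_X(1))$. Setting this equal to $1$ is an extremely restrictive condition. The cleaner route, which I expect is the one the paper takes, is: $e=1$ means the generic hyperplane section $Y$ has exactly one more dimension of transcendental (or algebraic, but it is a $(1,1)$-class by the monodromy being $\Z/2\Z$ and fixing it only up to sign — actually it is anti-invariant, hence the Hodge structure forces it to be of type $(1,1)$ and a $-2$-class) $H^2$-class than $X$. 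By Sommese's adjunction-theoretic classification of polarized threefolds $(X,h)$ with small invariants — specifically the classification of $(X,h)$ for which $K_X + 2h$ or $K_X + h$ fails to be nef or has small Kodaira-type dimension, or for which the discrepancy $h^{1,1}(Y) - h^{1,1}(X)$ is minimal — one gets a finite list. One then checks that the list of such $(X,h)$ is exactly: $(\PP^3, \Oo(2))$ reembedded (Veronese, item 2), its isomorphic projections, the quadric $Q\subset\PP^4$ with $\Oo(1)$ (item 1), the conic bundle / linear system $|2H-E|$ on $\mathrm{Bl}_{pt}\PP^3$ (item 3), and $\PP^1\times\PP^1\times\PP^1$ with the Segre polarization (item 4).

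The concrete steps, in order, are: (1) Recall from Picard–Lefschetz theory and the SGA7 proposition that the monodromy group is $\Z/2\Z$ iff $X^*$ is a hypersurface and the vanishing cohomology of a smooth $Y$ is one-dimensional, spanned by a vanishing cycle $\delta$ with $\langle\delta,\delta\rangle = \pm 2$ (self-intersection $-2$ on the surface $Y$). (2) Deduce that $h^2(Y) = b_2(X) + 1$ and, analyzing the Hodge structure, that $\delta$ is algebraic, so $\rho(Y) \le \rho(X) + 1$ type statements hold and $K_Y$-positivity is constrained; more importantly, deduce numerical constraints: $Y$ is a surface whose only vanishing cycle is a single $(-2)$-curve class, which forces $(X,h)$ to be ``almost'' a variety with vanishing dual defect, i.e., close to a scroll. (3) Apply Sommese's result (the adjunction-theoretic ingredient advertised in the abstract) to the pair $(X, h)$: this will say that either $(X,h)$ is one of a short explicit list of varieties with $K_X + 2h$ not nef/not big, or $K_X+2h$ defines a morphism contracting exactly the structure responsible for the single vanishing cycle. (4) For each case on Sommese's list, compute the vanishing cohomology dimension $e$ explicitly (e.g.\ via $\deg X^*$ or via $\chi(Y)$ and $\chi(X)$, or via an explicit Lefschetz pencil), keep only those with $e = 1$, and verify the monodromy is genuinely $\Z/2\Z$ and not trivial. (5) Note that isomorphic projections do not change $X^*$'s being a hypersurface nor the monodromy, accounting for the ``or its isomorphic projection'' clauses. (6) Conversely, for each of the four families, exhibit a Lefschetz pencil with a single (up to the orbit) vanishing cycle to confirm membership.

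The main obstacle I anticipate is step (3)–(4): extracting from adjunction theory precisely the right finiteness statement and then, case by case, computing the dimension of the vanishing cohomology to cut the list down to exactly four items. In particular, distinguishing ``$e=1$'' (monodromy $\Z/2\Z$) from ``$e=0$'' (monodromy trivial, the $\PP^2$-scroll case already handled by Ein's theorem) and from ``$e\ge 2$'' requires either a clean closed formula for $\deg X^*$ in terms of $(c_i(X), h)$ that behaves well under the blowup in item 3, or a careful Euler-characteristic bookkeeping $e = (-1)^{\dim Y}\bigl(\chi_{\mathrm{top}}(Y) - 2\,\chi_{\mathrm{top}}(Y\cap H')\bigr) + \cdots$ along a pencil; keeping track of the corrections from the base locus and from the non-vanishing cohomology of $X$ is where the bulk of the genuine work lies. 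A secondary subtlety is ensuring the list is exhaustive: Sommese's classification must be quoted in a form strong enough to cover all polarized smooth threefolds with the relevant adjunction behavior, so that no sporadic example is missed.
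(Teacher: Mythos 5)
Your overall strategy (reduce $\Z/2\Z$ monodromy to ``$X^*$ is a hypersurface and $\dim\Ev(Y)=1$'', then invoke an adjunction-theoretic classification and check cases) points in the right direction, but there is a genuine gap at the exact place where the real content lies: the bridge from the monodromy condition to Sommese's classification. The paper's key lemma is that the monodromy group is \emph{finite} if and only if $H^0(X,\omega_X(1))=0$ (Proposition~6.1 of~\cite{Lvovski}, resting on Deligne's criterion that finiteness is equivalent to $\Ev(Y)\subset H^{1,1}(Y)$); it is precisely the class of threefolds with $h^0(\omega_X(1))=0$ that Sommese classifies, and $\Z/2\Z$ monodromy trivially implies finite monodromy. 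Your proposal never identifies this equivalence. Instead you try to translate ``$e=1$'' directly into a numerical constraint, and here you make a false identification: $\dim\ev(Y)$ is \emph{not} the number of singular members of a Lefschetz pencil (i.e., not $\deg X^*$); the vanishing cycles span $\ev(Y)$ but are in general far from linearly independent (already for the quadric in $\PP^4$ one has $\deg X^*=2$ but $\dim\ev(Y)=1$, and for $v_2(\PP^3)$ the discrepancy is larger). So the route ``set $\deg X^*=1$ or bound it via Chern classes'' does not work, and the vague alternative (``$(X,h)$ is almost a scroll'', ``$K_X+2h$ fails to be nef'') is not a statement that Sommese's theorem actually delivers.

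The second half of the argument is also substantially underestimated. Once one has Sommese's list (scrolls over surfaces, pencils of quadrics, Veronese pencils, Del Pezzo threefolds, the quadric, $v_2(Q)$ and its blowups), computing $\dim\ev(Y)$ for each family and extracting the $A_1$ cases is the bulk of the paper: it uses Deligne's proposition that the vanishing cycles form an irreducible simply laced root system, and then identifies the root system in each case by computing the index $[L^\vee:L]$ of the root lattice in its dual (e.g., distinguishing $D_m$ from $A_{m-1}$ for ordinary versus extraordinary quadric pencils, handling Veronese pencils via a Leray spectral sequence computation of $b_2$, and classifying the extraordinary pencils with exactly two degenerate hyperplane-section fibers via Swinnerton-Dyer's list of degree-$4$ varieties). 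Your Euler-characteristic bookkeeping would at best give $\dim\ev(Y)$ in some cases, but without the finiteness bridge you cannot even restrict attention to a finite list in the first place, and without the lattice-theoretic identification you would still need a separate argument that the surviving cases genuinely have nontrivial monodromy.
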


In his paper~\cite{Zak91}, F.~L.~Zak announced without proof that,
for a smooth variety $X\subset\PP^N$ of odd dimension $n=2k+1$
the following three assertions are equivalent.
\begin{enumerate}
\item The monodromy group of~$X$ is $\Z/2\Z$;\label{item:Z/2Z}
\item $b_{2k}(Y)=b_{2k}(X)+1$, where $Y$ is a smooth hyperplane
  section.\label{item:dim=1} 
\item The dual variety $X^*\subset(\PP^N)^*$ is a normal
  hypersurface.\label{item:normal}   
\end{enumerate}
Thus, Theorem~\ref{3folds} yields a complete description of, say, smooth
threefolds the dual of which is a normal hypersurface, too.

The equivalence $(\ref{item:Z/2Z})\Leftrightarrow (\ref{item:dim=1})$
is easy (see the proof of Proposition~\ref{c_2=2} below); the proof of
the equivalence $(\ref{item:dim=1})\Leftrightarrow(\ref{item:normal})$
will be published in a separate paper, joint with Zak.

In our proof of Theorem~\ref{3folds} we heavily use the following two
results: a proposition (due to Deligne) from SGA7 (see
Proposition~\ref{Deligne} below) and the main result of Sommese's
paper~\cite{Sommese}. To wit, it turns out that the classification of
smooth threefolds $X\subset\PP^N$ for which $h^0(\omega_X(1))=0$,
which is contained in~\cite{Sommese}, is the same as the
classification of smooth threefolds with finite monodromy group. Using
Deligne's result mentioned above, one can extract from Sommese's list
of such varieties those with~$\Z/2\Z$ monodromy group. Actually, we
obtain a complete description of monodromy groups of varieties on
Sommese's list, with one exception.

In the above mentioned paper~\cite{Zak91} it was announced that a
complete classification of smooth threefolds $X\subset\PP^N$ with
monodromy group $\Z/2\Z$ could be obtained basing on a detailed study
of the second fundamental form of $X$. Be it for better or for worse,
the results of the paper~\cite{Sommese} (i.e., the adjunction
theoretic methods) have lead to success sooner. Yet it would be 
interesting to find a more conceptual proof of Theorem~\ref{3folds},
especially a proof that could be extended to higher (odd)
dimensions.

Besides Theorem~\ref{3folds} we prove the following two results.

First, if $\E$ is a very ample vector bundle of rank~$2$ over a smooth
surface~$S$, $c_2(\E)=r$, and if we vary a section of~$\E$ in the space of
sections with precisely~$r$ zeroes, then this variation induces a
group of permutations of the zero locus of one such section.
It turns out that this group is always the entire symmetric
group~$S_r$ (Proposition~\ref{prop:zeroes-of-sections}).

Second, we show that if $X\subset\PP^N$ is a three-dimensional scroll
over a smooth projective surface, then the curve that is its general
one-dimensional linear section cannot be ``too special''
(Proposition~\ref{linear-section}). 

After $\Z/2\Z$, the next smallest monodromy group of an
odd-dimensional variety is the symmetric group~$S_3$ (it is the Weyl
group of the $A_2$ root system; see
Section~\ref{survey:finite.monodromy}). It seems possible to extract a
list of threefolds with such monodromy from Sommese's list as well; to
that end, the main result of~\cite{Noma2} may be of
help. Conjecturally, the only smooth threefolds with monodromy
group~$S_3$ (``with the $A_2$ monodromy'') are the scroll
$\PP(\Oo_{\PP^2}(1)\oplus \Oo_{\PP^2}(3))$ and the smooth hyperplane
section of the Segre variety $\PP^2\times\PP^2\subset\PP^8$. The
latter threefold is also a scroll over $\PP^2$, namely, the
projectivisation of $\T_{\PP^2}$ (or of $\Omega^1_{\PP^2}$, depending
on the conventions).

\medskip

The paper is organized as follows. In Section~\ref{sec:PLsurvey} we
briefly recall what we need from Picard--Lefschetz theory.  In
Section~\ref{sec:case-by-case} we extract a characterization of
threefolds with finite monodromy group from Sommese's main result
in~\cite{Sommese}. In
Sections~\ref{ntl:subseq:scrolls}--\ref{sec:odds&ends} we describe
vanishing root systems (see the definition in
Section~\ref{survey:finite.monodromy}) for the varieties on Sommese's
list and extract from the list those with $\Z/2\Z$ monodromy.

\subsection*{Acknowledgements}
I am grateful to Fyodor Zak for numerous fruitful discussions of the
subject of this paper. I would like to thank the anonymous referees for really valuable suggestions and corrections.

\section{Notation and conventions}\label{notconv}

The base field will always be the field of complex numbers. 

If $X\subset\PP^N$ is a Zariski closed subset and $x\in X$, then
$T_xX\subset\PP^N$ is the embedded Zariski tangent space to $X$
at~$x$. 

If \E is a vector bundle aka locally free sheaf on $X$, then (unlike
the EGA notation) closed points of the projectivisation $\PP(\E)$ are
lines in the fibers of \E and closed points of the projectivization
$\PP^*(\E)$ are hyperplanes in the fibers of~\E. In particular,
$p_*\Oo_{\PP^*(\E)|X}(1)=\E$, where $p\colon\PP^*(\E)\to X$ is the
canonical projection.

If $(\PP^N)^*$ is the dual projective space to $\PP^N$ and
$\alpha\in(\PP^N)^*$, then $H_\alpha\subset\PP^N$ is the corresponding
hyperplane. If $X\subset\PP^N$ is a smooth projective variety, then
its \emph{dual variety} is
\begin{equation*}
X^*=\{\alpha\in(\PP^N)^*\colon \text{$H_\alpha$ is not transversal to~$X$}\}.  
\end{equation*}
If $\dim X=n$, $\alpha\in(\PP^N)^*\setminus X^*$, $\dim X=n$, and
$H_\alpha \cap X=Y$, then the variation of $\alpha$ in
$(\PP^N)^*\setminus X^*$ induces an action of the fundamental group
$\pi_1((\PP^N)^*\setminus X^*,\alpha)$ on $H^{n-1}(Y,\Q)$ which is
called \emph{the monodromy action}; slightly abusing the language, we
will say that the image of $\pi_1((\PP^N)^*\setminus X^*,\alpha)$ in
$\GL(H^{n-1}(Y,\Q))$ is \emph{the monodromy group} of~$X$; as a
subgroup of $\GL_{b_{n-1}(Y)}(\Q)$, the monodromy group is defined
up to conjugation.

\section{A survey of some results from Picard--Lefschetz
  theory}\label{sec:PLsurvey} 

None of the assertions in this section claims to novelty: we just
briefly recall what we need from Picard--Lefschetz theory. Almost all
the proofs will be omitted. For non-trivial details we refer the
reader to SGA7.2, especially Expos\'es XVII and XIX (\cite{SGA7.2} in
the bibliography) and to Lamotke~\cite{Lamotke}.

Suppose that $X\subset\PP^N$ is a smooth projective variety of
\emph{odd} dimension~$n=2k+1$ and $H=H_\alpha\subset\PP^N$, where
$\alpha\in (\PP^N)^*\setminus X^*$, is a hyperplane transversal to
$X$; put $Y=H\cap X$.

\subsection{Pairings.}\label{subseq:pairings}
We will identify $H_0(Y,\Q)$ with \Q, mapping a singular $0$-chain
$\sum c_ia_i$, $a_i\in Y$, to $\sum c_i\in\Q$.  If $\xi\in H^m(Y,\Q)$,
$\sigma\in H_m(Y,\Q)$, we put, keeping this identification in mind,
\begin{equation}\label{eq:<>}
\langle\xi,\sigma\rangle=\xi\cap\sigma\in H_0(Y,\Q)=\Q.  
\end{equation}
The \Q-valued pairing $\langle\cdot,\cdot\rangle$ is non-degenerate.

Define, on $H^{2k}(Y,\Q)$, the
non-degenerate bilinear form
\begin{equation}\label{eq:PL:bil.form}
  (\xi,\eta)=(-1)^k\langle\xi\cup\eta,[Y]\rangle,
\end{equation}
where $[Y]\in H_{4k}(Y,\Q)$ is the fundamental class. If $\dim X=3$,
then the pairing~\eqref{eq:PL:bil.form} is the negated intersection index.

Let $P\colon H^{2k}(Y,\Q)\to H_{2k}(Y,\Q)$ be the Poincar\'e duality
isomorphism, defined by the formula $P(\xi)=\xi\cap[Y]$ (cap-product
with the fundamental class). The isomorphism $P$ transplants the
pairing~\eqref{eq:PL:bil.form} to $H_{2k}(Y,\Q)$, by the formula
\begin{equation}\label{eq:()}
(x,y)=(P^{-1}(x),P^{-1}(y)).  
\end{equation}
The notation coincides with that in~\eqref{eq:PL:bil.form}; this
will not lead to a confusion. For any $x,y\in H_{2k}(Y,\Q)$, one has
\begin{equation*}
(x,y)=\langle P^{-1}(x),y\rangle.  
\end{equation*}

\subsection{Spaces of vanishing cycles.}\label{Spaces-of-vanishing-cycles}
The mapping $i^*\colon H^{2k}(X,\Q)\to H^{2k}(Y,\Q)$, where $i\colon
Y\hookrightarrow X$ is the embedding, is injective, and the
pairing~\eqref{eq:PL:bil.form} is non-degenerate on
$i^*H^{2k}(X,\Q)$. Let $\Ev(Y)$ be the orthogonal complement to
$i^*H^{2k}(X,\Q)$ in $H^{2k}(Y,\Q)$ with respect to the bilinear
form~\eqref{eq:PL:bil.form}. One has
$\dim\Ev(Y)=b_{2k}(Y)-b_{2k}(X)$. If $X^*$ is a hypersurface,
$\Ev(Y)\ne0$ and the bilinear form~\eqref{eq:PL:bil.form} is
non-degenerate on~$\Ev(Y)$.  The variation of $H_\alpha$ in the family
of all transversal to $X$ hyperplanes induces an action of
$\pi_1(\PP^N\setminus X^*)$ on $H^{2k}(Y,\Q)$ preserving the bilinear
form and the decomposition $H^{2k}(Y,\Q)=i^*H^{2k}(X,\Q)\oplus
\Ev(Y)$; this action is trivial on $i^*H^{2k}(X,\Q)$ and irreducible
on $\Ev(Y)$.

Put
\begin{equation*}
\ev(Y)=\Ker(i_*\colon H_{2k}(Y,\Q)\to H_{2k}(X,\Q)).  
\end{equation*}
If $P\colon H^{2k}(Y,\Q)\to H_{2k}(Y,\Q)$ is the Poincar\'e duality
isomorphism, then $P(\Ev(Y))=\ev(Y)$.  The isomorphism
$P\colon\Ev(Y)\to\ev(Y)$ transplants the action of the
monodromy group on $\Ev(Y)$ to~$\ev(Y)$.

\subsection{Vanishing cycles and the monodromy
  group.}\label{subseq:vanishing} 
Suppose now that $\ell\subset (\PP^N)^*$ is a Lefschetz pencil with
respect to $X$ (see \cite[Expos\'e~XVII, 2.2]{SGA7.2}) and put
$\ell\cap X^*=\{\lst\alpha r\}$. If $\lst\delta r\in H_{n-1}(Y,\Z)$
are vanishing cycles corresponding to \lst\alpha r, there exists an
exact sequence
\begin{equation*}
\Z^r\xrightarrow A H_{n-1}(Y,\Z)\to H_{n-1}(X,\Z)\to 0,  
\end{equation*}
where $A(\lst nr)=\sum_{i=1}^r n_i\delta_i$ (see \cite[Expos\'e~XIX,
  Section~4.3]{SGA7.2}). For any $i$, $1\le i\le r$, put
\begin{equation*}
\tau_i=A(\delta_i)\otimes 1\in H_{2k}(Y,\Z)\otimes\Q=H_{2k}(Y,\Q).  
\end{equation*}
The elements $\tau_i$ are also called vanishing cycles.

The vanishing cycles $\tau_i$ have the following properties.

\begin{enumerate}
\item The subspace $\ev(Y)\subset H_{2k}(Y,\Q)$ is spanned by \lst\tau
  r.
\item For any $i,j$, $1\le i\le j\le r$, there exists an element $g$
  of the monodromy group such that $g(\tau_i)=\tau_j$.  
\item\label{(/tau,/tau)} $(\tau_i,\tau_i)=2$ for all $i$ (recall
  definitions~\eqref{eq:PL:bil.form} and~\eqref{eq:()}).
\item\label{gen.by.refl}
  The monodromy group acting on $\ev(Y)$ is generated by ``reflections
  in vanishing cycles'', that is, by the linear mappings of the
  form
  \begin{equation}\label{eq:refl}
s_i\colon x\mapsto x-\langle x,\tau_i^\vee\rangle \tau_i,\qquad 1\le i\le r,    
  \end{equation}
where $\lambda^\vee=P^{-1}(\lambda)\in\Ev(Y)$ for any
$\lambda\in\ev(Y)$.
\end{enumerate}

\subsection{Finite monodromy groups.}\label{survey:finite.monodromy}
Suppose, in the above setting, that $X^*$ is a hypersurface, so the
monodromy group $G\subset\GL(\Ev(Y))$ is nontrivial. According to
Proposition~3.4 in Expos\'e~XIX of~\cite{SGA7.2}, the group $G$ is
finite if and only if $\Ev(Y)\subset H^{2k}(Y,\Q)\cap H^{k,k}(Y)$.

Suppose now that this is the case and put
\begin{equation*}
R=\{g(\tau_i)\quad \text{for all $g\in G$ and all $i$, $1\le i\le r$}\}.  
\end{equation*}

\begin{proposition}[P.~Deligne]\label{Deligne}
The subset $R$ of the space $\ev(Y)$ endowed with the bilinear
form~\eqref{eq:()} is an irreducible root system of the
type~$A$, $D$, or~$E$, the root lattice of this root system is spanned
by \lst\tau r, and its Weyl group coincides with the monodromy
group~$G$.
\end{proposition}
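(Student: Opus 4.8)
The plan is to package the properties of the vanishing cycles $\tau_i$ listed in Section~\ref{subseq:vanishing} into the abstract notion of a root system and then invoke the classification. First I would recall that, since $G$ is finite, by a standard averaging argument there is a $G$-invariant positive (or negative) definite form on $\Ev(Y)$; combined with the fact that the Hodge-theoretic intersection form~\eqref{eq:()} is already $G$-invariant and nondegenerate on $\ev(Y)$, one checks that \eqref{eq:()} is itself (up to sign) definite on $\ev(Y)$. This uses the hypothesis $\Ev(Y)\subset H^{k,k}(Y)$ in an essential way: on the $(k,k)$-part the cup-product pairing becomes definite after the sign twist built into~\eqref{eq:PL:bil.form}, so $(\tau_i,\tau_i)=2>0$ forces the whole form to be positive definite on $\ev(Y)$.

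Next I would verify the root-system axioms for $R$ directly from items \eqref{(/tau,/tau)} and \eqref{gen.by.refl} of Section~\ref{subseq:vanishing}. Each element of $R$ has square length~$2$ because $R$ is a single $G$-orbit of the $\tau_i$ and $G$ preserves the form. The reflection $s_i$ of~\eqref{eq:refl}, rewritten via $\langle x,\tau_i^\vee\rangle=(x,\tau_i)$, is exactly the orthogonal reflection $x\mapsto x-(x,\tau_i)\tau_i$ in the hyperplane $\tau_i^\perp$; since $G$ is generated by these and $G$ permutes $R$, the set $R$ is stable under $s_\alpha$ for every $\alpha\in R$. Integrality, $(\alpha,\beta)\in\Z$ for $\alpha,\beta\in R$, follows because all $\tau_i$ lie in the lattice $H_{2k}(Y,\Z)/(\text{torsion})$ on which the intersection form is integral, and $R$ lies in the sublattice they generate; finiteness of $R$ is clear since $G$ is finite and there are finitely many $\tau_i$. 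Reducedness ($\R\alpha\cap R=\{\pm\alpha\}$) follows from all roots having the same length~$\sqrt2$ together with integrality. Irreducibility: the monodromy action on $\Ev(Y)$, hence on $\ev(Y)$, is irreducible (Section~\ref{Spaces-of-vanishing-cycles}), so $R$ cannot decompose into mutually orthogonal pieces spanning complementary invariant subspaces. That $R$ spans $\ev(Y)$ and that the $\tau_i$ span $\ev(Y)$ are again item~(1) of Section~\ref{subseq:vanishing}.

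It then remains to pin down the type and the Weyl group. Since $R$ is an irreducible, reduced, integral root system all of whose roots have the same squared length~$2$, it is simply laced, i.e.\ of type $A_m$, $D_m$, or $E_m$; this is the classification of root systems restricted to the simply-laced case. By construction the Weyl group $W(R)$ is generated by the reflections $s_\alpha$, $\alpha\in R$; each such $s_\alpha$ lies in $G$ (it is conjugate in $G$ to some $s_i$, because $\alpha=g(\tau_i)$ gives $s_\alpha=g s_i g^{-1}$), so $W(R)\subseteq G$; conversely $G$ is generated by the $s_i\in W(R)$ by item~\eqref{gen.by.refl}, so $G\subseteq W(R)$. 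Hence $G=W(R)$. Finally, the lattice $Q$ spanned by $R$ equals the lattice spanned by $\lst\tau r$ — one inclusion is trivial, and the other holds because every $g(\tau_i)$, being obtained from the $\tau_j$ by applying a product of the reflections $s_j$ which themselves preserve $\sum\Z\tau_j$, already lies in $\sum_j\Z\tau_j$ — so this common lattice is the root lattice $Q(R)$.

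The main obstacle I anticipate is the first step: showing that the intersection form~\eqref{eq:()} is definite on $\ev(Y)$, which is where the Hodge-theoretic hypothesis $\Ev(Y)\subset H^{k,k}$ must be converted into positivity. Everything downstream — length~$2$, the reflection description, stability, integrality, irreducibility, and the identification $G=W(R)$ with root lattice $\sum\Z\tau_i$ — is then a fairly mechanical unwinding of Section~\ref{subseq:vanishing} together with the classification of simply-laced root systems.
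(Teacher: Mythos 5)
Your proposal is correct and follows essentially the same path as the paper: both verify the root-system axioms for $R$ directly from the four properties of the $\tau_i$ listed in Section~\ref{subseq:vanishing} (length $2$, stability of $R$ under the conjugated reflections $g s_i g^{-1}$, integrality coming from the fact that everything lives in the image of $H_{2k}(Y,\Z)$), deduce irreducibility from the irreducibility of the monodromy action on $\ev(Y)$, get the simply-laced type from the equal root lengths, identify $G$ with the Weyl group by the two inclusions you give, and obtain the root-lattice statement from integrality of $(\tau_i,\tau_j)$. The one real divergence is your first step. The paper works with Bourbaki's purely combinatorial axioms $(\mathrm{SR_I})$--$(\mathrm{SR_{III}})$, for which no Euclidean structure on $\ev(Y)$ is presupposed (positive definiteness of the canonical form is a consequence of the axioms plus finiteness of $R$, not a hypothesis), so what you single out as the ``main obstacle'' simply does not arise there. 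Your route --- establishing definiteness of the form~\eqref{eq:()} on $\ev(Y)$ first, so as to use the Euclidean definition of a root system --- is also legitimate, but as stated it needs a small correction: the cup-product form twisted as in~\eqref{eq:PL:bil.form} is definite only on the \emph{primitive} part of $H^{k,k}(Y)$ (Hodge--Riemann), not on all of it; this suffices because $\Ev(Y)$, being orthogonal to $i^*H^{2k}(X,\Q)\ni h^k$, is primitive. Likewise your Schur-type averaging argument should be run over $\R$ together with irreducibility, or replaced outright by the Hodge--Riemann relations. These are repairable details rather than gaps, and everything downstream of them matches the paper's proof.
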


This assertion, or rather an equivalent statement about $\Ev(Y)$ and
$l$-adic cohomology, is contained in~\cite[Expos\'e XIX,
  Proposition~3.3]{SGA7.2}. For the reader's convenience we provide
some details.

\begin{proof}[Proof of Proposition~\ref{Deligne}]
To show that $R$ is a root system we are to check the conditions
($\mathrm{SR_I}$)--($\mathrm{SR_{III}}$) from Bourbaki~\cite[Chapitre
  VI, \S\,1]{LIE}.  Observe that $R$ is finite since $G$ is finite and
that none of the elements of $R$ is zero since none of the~$\tau_i$ is
zero by virtue of assertion~(\ref{(/tau,/tau)}) from
Section~\ref{subseq:vanishing}. This checks
Condition~($\mathrm{SR_I}$). If $\tau=g(\tau_i)\in R$, where $g\in G$
and $1\le i\le r$, and if $s_i$ is the reflection
from~\eqref{eq:refl}, then the linear automorphism $g^{-1}s_ig\in G$
is the reflection in $\tau=g(\tau_i)\in R$, of the form $x\mapsto
x-(x,\tau^\vee)x$, so this reflection maps $R$ into itself and
Bourbaki's Condition~($\mathrm{SR_{II}}$) is also satisfied. Finally,
to check Condition~($\mathrm{SR_{III}}$) observe that all the $\tau_i$
lie in the image of the natural mapping $H_{2k}(Y,\Z)\to
H_{2k}(Y,\Q)$. Since the action of $\pi_1((\PP^N)^*\setminus X^*)$ on
$H_{2k}(Y,\Q)$ lifts to $H_{2k}(Y,\Z)$, this is true for all the
reflections in elements of $R$ as well. It is clear that if $x',y'\in
H_{2k}(Y,\Z)$ and $x$, $y$ are their images in $H_{2k}(Y,\Q)$, then
$\langle x,y^\vee\rangle=(-1)^kx'\cap y'\cap[Y]_\Z\in\Z$ (here,
$[Y]_\Z\in H_{4k}(Y,\Z)$ is the fundamental class and we naturally
identified $H_{4k}(Y,\Z)$ with~\Z). Thus,
Condition~($\mathrm{SR_{III}}$) is satisfied and $R$ is a root system.

The monodromy group $G$ coincides with the Weyl group of~$R$ by virtue
of assertion~\ref{gen.by.refl} from Section~\ref{subseq:vanishing},
the root system~$R$ is irreducible since the action of $G$ on $\ev(Y)$ is
irreducible, and $R$ is simply laced since the lengths of all roots are
equal by virtue of assertion~\ref{(/tau,/tau)} from
Section~\ref{subseq:vanishing}. It remains to show that the root
lattice is spanned by \lst\tau r, i.\,e., that any $\tau\in R$ is an
integer linear combination of \lst\tau r. To that end, it suffices to
show that each $s_i(\tau_j)$ is an integer linear combination of
\lst\tau r, and this assertion follows from
Condition~($\mathrm{SR_{III}}$), which we already checked.
\end{proof}

\begin{note}
Observe that \lst\tau r \emph{need not} be a base of the root
system~$R$.
\end{note}

Further on, the words, say, ``The variety $X$ has $A_2$ monodromy''
will mean that the monodromy group of $X$ is finite and the root
system~$R$ constructed above is of the indicated type; in particular,
it implies that dimension of the space of vanishing cycles equals the
rank of this root system and the monodromy group is isomorphic to its
Weyl group. We will refer to the root system~$R$ above as
the \emph{vanishing root system} of the variety~$X$.

\section{Sommese's list}\label{sec:case-by-case}

From now on, we assume that the odd-dimensional variety
$X\subset\PP^N$ with monodromy group $\Z/2\Z$ has dimension~$3$; we
aim at the classification of such varieties. We begin with varieties
having \emph{finite} monodromy group.

We will need the following \emph{ad hoc}
terminology, which is a variant of the notion of ``minimal
reduction'' from~\cite{Sommese}.

\begin{definition}\label{def:subordinate}
Let us say that a projective variety $Y\subset \PP^n$ is
\emph{$k$-subordinate} to the variety $X\subset\PP^{N+k}$ if there exists
a sequence of smooth projective varieties $X_i\subset \PP^{N+k-i}$,
$X_0=X$, $X_k=Y$, and birational morphisms
\begin{equation}\label{eq:subordinate}
  \xymatrix{
    {X=X_0}\ar@{-->}[r]^-{\ph_0}&{X_1}\ar@{-->}[r]^-{\ph_1}&{\ldots}
    \ar@{-->}[r]^-{\ph_{k-1}}&{X_k=Y}
  }   
\end{equation}
such that each $\ph_i\colon X_i\dasharrow X_{i+1}$ is the projection
from a point $a_i\in X_i$ and each $\ph_i$ induces an isomorphism
between $X_{i+1}$ and the blowup of $X_i$ at~$a_i$. If $Y$ is
$k$-subordinate to $X$ for some $k>0$, we will just say that $Y$ is
\emph{subordinate} to~$X$.
\end{definition}

\begin{proposition}\label{sommeses_list}
Suppose that $X\subset\PP^N$ is a smooth threefold such that $X^*$ is
a hypersurface \textup(equivalently, the
monodromy group of $X$ is non-trivial \textup). Then the monodromy
group of~$X$ is  finite if and only if $X$ is one of the varieties
listed in Table~\ref{table:Sommese's.list}.
\end{proposition}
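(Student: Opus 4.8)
The plan is to translate the question of finiteness of the monodromy group into an adjunction-theoretic condition that Sommese has already analyzed.  By the result of SGA7 quoted in Section~\ref{survey:finite.monodromy}, the monodromy group of the threefold $X$ (with $X^*$ a hypersurface) is finite if and only if $\Ev(Y)\subset H^2(Y,\Q)\cap H^{1,1}(Y)$, i.e.\ if and only if every vanishing cycle on a smooth surface section $Y$ is of Hodge type $(1,1)$.  So the first step is to show that this Hodge-theoretic condition is equivalent to the numerical condition $h^0(\omega_X(1))=0$.  One direction is the easy one: by the Lefschetz hyperplane theorem and the Hodge decomposition, $\Ev(Y)$ contains a class of type $(2,0)$ (equivalently $(0,2)$) precisely when $h^{2,0}(Y)>h^{2,0}(X)$, and since $H^{2,0}(Y)=H^0(\Omega^2_Y)=H^0(\omega_Y)$ while $H^{2,0}(X)=H^0(\Omega^2_X)$, one rewrites the inequality $h^0(\omega_Y)>h^0(\Omega^2_X)$ using the adjunction formula $\omega_Y=(\omega_X\otimes\Oo_X(1))|_Y$ and the exact sequence $0\to\omega_X\to\omega_X(1)\to\omega_Y\to0$.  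Chasing this sequence and using $h^0(\omega_X)\le h^0(\Omega^2_X)$ (with equality on a threefold, since $\omega_X=\Omega^3_X$ and $h^{3,0}$ has no a priori relation — here one should instead argue directly that $h^0(\Omega^2_X)$ is irrelevant because $\Ev(Y)$ sits inside the \emph{primitive} cohomology and the $(2,0)$-part of primitive $H^2(Y)$ has dimension $h^0(\omega_Y)-h^0(\omega_X\otimes\Oo_X(1))$'s contribution from $X$), one sees that $\Ev(Y)$ is all of type $(1,1)$ exactly when $H^0(\omega_X(1))=0$.  I will spell this Hodge bookkeeping out carefully, since it is the conceptual heart of the reduction.

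With that equivalence in hand, the second step is purely a matter of invoking Sommese.  The main theorem of~\cite{Sommese} classifies smooth projective threefolds $X\subset\PP^N$ with $h^0(\omega_X(1))=h^0(\K_X\otimes\Oo_X(1))=0$; equivalently, in the language of adjunction theory, it describes those $X$ for which the adjoint bundle $\K_X+H$ fails to be effective, and more precisely it produces, via repeated contraction of the adjunction mapping and of exceptional lines, a short list of such $X$ together with their ``reductions.''  These are exactly the varieties appearing in Table~\ref{table:Sommese's.list}.  The bulk of the work here is expository: I would restate Sommese's classification in the notation of Definition~\ref{def:subordinate}, observing that the operation ``project from a point lying on a line of $(-1)$-type'' is precisely the inverse of the blowdown steps in Sommese's reduction, so that every threefold with $h^0(\omega_X(1))=0$ is either a member of a short list of ``minimal'' examples (scrolls over surfaces, the Veronese $v_2(\PP^3)$, quadric bundles, Del Pezzo fibrations, and a few sporadic Fano-type threefolds, etc.) or is subordinate, in the sense of the definition, to one of these.

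The main obstacle — and the step where care is genuinely needed — is the precise matching of Sommese's classification statement with the table, including the bookkeeping of linear projections.  Sommese's theorem is stated in terms of the \emph{minimal} model under adjunction, but the varieties with $\Z/2\Z$ (or more generally finite) monodromy form a projectively-closed-under-isomorphic-projection class: if $X\subset\PP^N$ has $h^0(\omega_X(1))=0$ then so does every isomorphic projection of $X$, and conversely the birational-morphism-by-point-projection steps in~\eqref{eq:subordinate} preserve the vanishing of $H^0(\omega(1))$ because each such step replaces $(X_i,\Oo(1))$ by $(X_{i+1},\Oo(1))=(\Bl_{a_i}X_i,\sigma^*\Oo(1)-E)$ and one checks $h^0(\K_{X_{i+1}}+H_{i+1})=h^0(\sigma^*(\K_{X_i}+H_i))=h^0(\K_{X_i}+H_i)$.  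So the correct formulation of the proposition is that $X$ has finite monodromy iff $X$ appears on Sommese's list \emph{up to isomorphic projection and the subordination operations of Definition~\ref{def:subordinate}}, and assembling Table~\ref{table:Sommese's.list} amounts to listing Sommese's minimal reductions and noting in each row whether projections/blowups are allowed.  Once the table is set up this way, the proposition follows formally from the two steps above; the remaining sections of the paper then go through the table case by case to read off the vanishing root systems.
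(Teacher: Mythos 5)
Your overall route is the same as the paper's: reduce finiteness of the monodromy group to the vanishing $h^0(\omega_X(1))=0$ and then read off Sommese's classification. As written, however, there are two genuine gaps.

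First, the reduction. The paper simply cites Proposition~6.1 of Lvovski's paper for the equivalence ``monodromy finite $\Leftrightarrow$ $h^0(\omega_X(1))=0$''; you instead try to rederive it from Deligne's criterion $\Ev(Y)\subset H^{1,1}(Y)$, and your Hodge bookkeeping does not close. The clean chain is: $\Ev(Y)\otimes\C$ is a sub-Hodge structure complementary to $i^*H^2(X,\C)$, so it is purely of type $(1,1)$ iff $h^{2,0}(Y)=h^{2,0}(X)$; then $h^{2,0}(Y)=h^0(\omega_Y)$, and the sequence $0\to\omega_X\to\omega_X(1)\to\omega_Y\to0$ together with Kodaira vanishing ($h^1(\omega_X(1))=0$) and Serre duality plus Hodge symmetry ($h^1(\omega_X)=h^2(\Oo_X)=h^{2,0}(X)$) gives $h^{2,0}(Y)-h^{2,0}(X)=h^0(\omega_X(1))-h^0(\omega_X)$. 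Your parenthetical about $h^0(\Omega^2_X)$ conflates $h^{2,0}(X)$ with $h^0(\omega_X)=h^{3,0}(X)$, and you never supply the final step: one must show that $h^0(\omega_X(1))=h^0(\omega_X)$ forces $h^0(\omega_X)=0$ (for instance because $h^0(A+B)\ge h^0(A)+h^0(B)-1$ for two nonempty linear systems on an integral variety, and $h^0(\Oo_X(1))\ge N+1$). Without that step you only obtain ``finite monodromy iff $h^0(\omega_X(1))=h^0(\omega_X)$,'' which is not the condition Sommese classifies.

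Second, the case analysis. The substantive content of the paper's proof is the translation of Sommese's sections (0.2)--(0.4) into the seven rows of the table: discarding the $\PP^2$-scrolls over curves (which do occur in Sommese's classification but have $X^*$ not a hypersurface, hence fall outside the hypothesis); identifying the general fiber $F$ of the fibration in case (0.4) as $(\PP^2,\Oo_{\PP^2}(2))$ from $\omega_F^{\otimes2}\cong\Oo_F(-3)$; and controlling the subordination chains (the centers $a_i$ cannot lie on exceptional divisors of earlier blowups, the chain starting from $(\PP^3,\Oo_{\PP^3}(2))$ stops after one step because the blowup is swept by lines, and a variety subordinate to a Veronese pencil is again a Veronese pencil). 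Dismissing all of this as expository bookkeeping leaves the proof unfinished; these verifications are exactly where the table could come out wrong. Your observation that $h^0(\omega(1))$ is preserved under the projection steps of Definition~\ref{def:subordinate} is correct (note $K_{X_{i+1}}+H_{i+1}=\sigma^*(K_{X_i}+H_i)+E$, which has the same $h^0$), but it is a supplement to, not a substitute for, the case-by-case matching.
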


\begin{table}
  \caption{Smooth threefolds $X\subset\PP^N$ with finite and
    nontrivial monodromy group}\label{table:Sommese's.list}

  \begin{tabular}{|c|p{.9\textwidth}|}
    \hline
No & Description of $X$\\
\hline
1 & $X$ is a scroll over a surface, that is, there exists a locally
  free sheaf~\E of rank~$2$ over a smooth surface~$S$ such that
  $(X,\Oo_X(1))\cong(\PP^*(\E),\Oo_{\PP^*(\E)|S}(1))$.\\
\hline
2 & $X$ is a pencil of quadrics, that is, there exists a morphism
$p\colon X\to C$, where $C$ is a smooth curve, such that the fiber of~$p$
  over a general point of $C$ is a smooth quadric (i.\,e., a smooth
  surface of degree~$2$ in~$\PP^N$).\\
\hline
3 & $X$ is a \emph{Veronese pencil}, that is, there exists a
  morphism $p\colon X\to C$, where $C$ is a smooth curve, such that,
for a general point $a\in C$, the fiber $X_a=p^{-1}(a)$ is a smooth
surface and $(X_a,\Oo_{X_a}(1))\cong (\PP^2,\Oo_{\PP^2}(2))$.\\
\hline
4 & $X$ is a Del Pezzo threefold, i.\,e., a Fano variety embedded by
  one half of the anticanonical class, that is,
  $\omega_X\cong\Oo_X(-2)$.\\
\hline
5 & $X$ is a smooth quadric in $\PP^4$.\\
\hline
6 & $X$ is the Veronese image $v_2(Q)\subset\PP^{13}$ or its isomorphic
  projection.\\
\hline
7 & $X\subset\PP^N$ is subordinate to $v_2(Q)$, that is, $X$ is the
blowup of the smooth three-dimensional 
  quadric $Q$ at $k\ge1$ points, and $\Oo_X(1)\cong
  \Oo_X(2\sigma^*H-E_1-\dots-E_k)$, where 
  $\sigma\colon X\to Q$ is the blowdown morphism, $H$ is a hyperplane
  section of~$Q$, and $\lst Ek\subset X$ are exceptional divisors.\\
\hline
  \end{tabular}
  
\end{table}

\begin{note}
The categories in Table~\ref{table:Sommese's.list} are not disjoint.
\end{note}

We will refer to the list in Table~\ref{table:Sommese's.list} as
\emph{Sommese's list}.

\begin{proof}
According to Proposition~6.1 of the paper~\cite{Lvovski}, the
monodromy group of $X$ is finite if and only if
$H^0(X,\omega_X(1))=0$. Now Main Theorem of Sommese's
paper~\cite{Sommese} contains a complete classification of pairs
$(X,\Ll)$, where $X$ is a smooth projective variety of dimension
$n\ge3$ and \Ll is an invertible sheaf on $X$ that is ample and
spanned by global sections and $H^0(X,\omega_X\otimes \Ll^{\otimes
  n-2}=0)$ (Sommese allows the variety~$X$ to have some mild
singularities as well). The list of such pairs is contained in
sections~(0.2), (0.3), and~(0.4) of~\cite{Sommese}.

Extracting smooth embedded threefolds with very ample~\Ll from the
list in Sommese's Section~(0.2) and excluding threefolds with trivial
monodromy (i.e., those whose dual is not a hypersurface, i.e.,
$\PP^2$-scrolls over curves), one obtains
smooth quadrics (item~(5) in Table~\ref{table:Sommese's.list}).

Applying the same procedure to the list in~\cite[Section
  (0.3)]{Sommese}, one obtains varieties from items~(1),
(2), and~(4).

Now, translating Sommese's result into the language of embedded
projective varieties, it is easy to see that the pairs $(X,\Oo_X(1))$ from
the list in~\cite[Section (0.4)]{Sommese}, where $\dim X=3$, $X$ is
smooth and embedded in $\PP^N$, are as follows. Either
$(X,\Oo_X(1))\cong (\PP^3,\Oo_{\PP^3}(2))$, or $(X,\Oo_X(1))\cong
(Q,\Oo_Q(2))$, where $Q\subset\PP^4$ is a smooth quadric, or there
exists a morphism $p\colon X\to C$, where $C$ is a curve, such that
$\omega_X^{\otimes 2}(3)\cong p^*\Ll$, where \Ll is a line bundle
on~$C$, or, finally, $X$ is subordinate to one of the above. 

If $(X,\Oo_X(1))\cong (\PP^3,\Oo_{\PP^3}(2))$, then $X$ is a variety
in item (4) of Sommese's list (Table~\ref{table:Sommese's.list}). If
one projects such an~$X$ from a point $a\in X$ to obtain a smooth
variety $X'\subset\PP^{N-1}$ that is isomorphic to the blowup of $X$
at $a$, then $X'$ is also a variety in item (4), and it is impossible
to further extend the chain from Definition~\eqref{def:subordinate}
since the variety~$X'$ is swept by lines, which will be blown down by
the next projection.

If $(X,\Oo_X(1))\cong
(Q,\Oo_Q(2))$, where $Q$ is the three-dimensional quadric, then $X$ is
$v_2(Q)\subset\PP^{13}$ or its isomorphic projection; this is
item~(6) in Sommese's list.

If $X$ is subordinate to a variety from item~(6), then it is a variety
from item~(7) (observe that, in the chain of
projections~\eqref{eq:subordinate}, each point $a_i$ does not lie on
exceptional divisors of the previous blowups since otherwise the
projection would blow down the lines lying on this exceptional divisor
and passing through~$a_i$).

Finally, if there exists a morphism $p\colon X\to C$ such that
$\omega_X^{\otimes 2}(3)\cong p^*\Ll$, where $\Ll$ is an invertible sheaf
on $C$, then, denoting by $F\subset X$ a general fiber of $p$ and
restricting to $F$, one obtains the isomorphism
$\omega_F^{\otimes2}\cong \Oo_F(-3)$, whence $F\cong \PP^2$ (since $F$
is a Del Pezzo surface and the canonical class of~$F$ is divisible
by~$3$) and $\Oo_F(1)\cong\Oo_{\PP^2}(2)$; thus, such an $X$ belongs
to item~(3) in Table~\ref{table:Sommese's.list}, so $X$ is a
Veronese pencil. Since a variety subordinate to a
Veronese pencil is also a Veronese pencil, this completes the proof.
\end{proof}

\section{Inspection of Sommese's list}

In this section we will find vanishing root systems for varieties from
Sommese's list. The method for identification of root systems was used
by Yu.~I.~Manin~\cite[Chapter 4]{Manin}, who attributes it to Deligne.

We will use the following  
\begin{Not}
Suppose that $L$ is a lattice in a finite dimensional linear space $V$
over~\Q, and suppose that $V$ is endowed with a symmetric and
non-degenerate definite \Q-valued bilinear form $(\cdot,\cdot)$ such
that $(x,y)\in\Z$ for any $x,y\in L$. We will put
\begin{equation*}
L^\vee=\{\lambda\in L\otimes\Q\colon(x,\lambda)\in\Z\quad \text{for all
  $x\in L$}\}.
\end{equation*}
\end{Not}

\begin{note}\label{newnote}
If $\langle \lst er\rangle$ is a \Z-basis of~$L$, then index of $L$ in
$L^\vee$ equals $\bigl|\det\|(e_i,e_j)\|\bigr|$. Hence, if $L\subset
L_1$ are two lattices such that $(x,y)\in\Z$ for any $x,y\in L_1$,
then
\begin{equation*}
[L^\vee:L]=[L_1^\vee:L_1]\cdot [L_1:L]^2.
\end{equation*}
\end{note}

\subsection{Scrolls over a surface}\label{ntl:subseq:scrolls}
We begin with item~(1) of Sommese's list
(Table~\ref{table:Sommese's.list}).

Throughout this section, $X$ is a three-dimensional scroll over a
surface, that is, $X=\PP^*(\E)$, where $\E$ is a very ample
bundle of rank~$2$ over a smooth projective surface $S$ and 
$\Oo_X(1)=\Oo_{X|S}(1)$. 

The answer to the question which scrolls over surfaces have $A_1$
monodromy can be immediately read off from the book~\cite{BelSom}.

\begin{proposition}\label{c_2=2}
In the above setting, $X$ has $A_1$ monodromy if and only if
$(X,\E)\cong (\PP^2,\Oo_{\PP^2}(1)\oplus \Oo_{\PP^2}(2))$ or
$(X,\E)\cong (Q,\Oo_Q(1)\oplus \Oo_Q(1))$, where $Q\subset\PP^3$ is
the smooth quadric.  
\end{proposition}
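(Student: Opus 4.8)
The plan is to reduce ``$A_1$ monodromy'' to the single numerical condition $c_2(\E)=2$ and then to quote the classification of very ample rank-$2$ bundles on surfaces with that value of the second Chern class.

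First I would record that, for a smooth threefold $X\subset\PP^N$ with general hyperplane section $Y$, having $A_1$ monodromy is equivalent to $\dim\Ev(Y)=1$. Indeed, by Section~\ref{Spaces-of-vanishing-cycles} one has $\dim\Ev(Y)=b_2(Y)-b_2(X)$. If this number vanishes, then $X^*$ is not a hypersurface and $X$ has trivial monodromy; if it is positive, then $X^*$ is a hypersurface, so by Section~\ref{subseq:vanishing} the monodromy group acts irreducibly on $\Ev(Y)$ and is generated by reflections of the form~\eqref{eq:refl}. When $\dim\Ev(Y)=1$ the only such reflection is $x\mapsto-x$, so the monodromy group is $\Z/2\Z$ and, by Proposition~\ref{Deligne}, its vanishing root system is an irreducible root system of rank one, i.e.\ of type $A_1$; conversely an $A_1$ root system forces $\dim\Ev(Y)=1$.

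Next I would compute $\dim\Ev(Y)$ for $X=\PP^*(\E)$. Since $X\to S$ is a $\PP^1$-bundle, the projective bundle formula gives $b_2(X)=b_2(S)+1$. A general hyperplane section $Y=\{s=0\}$ is cut out by a section $s$ of $\Oo_X(1)$, hence by a section of $\E=p_*\Oo_X(1)$; over a point of $S$ at which $s$ does not vanish $Y$ meets the fibre $\PP^1$ in a single point, while over a zero of $s$ it contains the whole fibre. As $\E$ is very ample, hence globally generated, for general $s$ the zero set $Z\subset S$ of $s$ consists of $c_2(\E)$ reduced points and, by Bertini, $Y$ is smooth; hence $Y$ is the blow-up of $S$ at $Z$, and $b_2(Y)=b_2(S)+c_2(\E)$. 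Combining the two computations, $\dim\Ev(Y)=c_2(\E)-1$, so $X$ has $A_1$ monodromy if and only if $c_2(\E)=2$.

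It then remains to list the smooth projective surfaces $S$ carrying a very ample rank-$2$ bundle $\E$ with $c_2(\E)=2$, and this is the only step that draws on input from outside the present paper. As indicated before the statement, it is immediate from the classification of ample and spanned rank-$2$ bundles on surfaces with small second Chern class given in~\cite{BelSom}: the only such pairs are $(\PP^2,\Oo_{\PP^2}(1)\oplus\Oo_{\PP^2}(2))$ and $(Q,\Oo_Q(1)\oplus\Oo_Q(1))$ with $Q\subset\PP^3$ the smooth quadric, which is exactly the asserted list. This last classification (equivalently, the exclusion of non-split bundles as well as of non-rational or higher-degree bases $S$) is the only genuinely non-formal ingredient; everything preceding it is routine once the Picard--Lefschetz material of Section~\ref{sec:PLsurvey} is available.
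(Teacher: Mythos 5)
Your proposal is correct and follows essentially the same route as the paper: both reduce ``$A_1$ monodromy'' to $\dim\Ev(Y)=b_2(Y)-b_2(X)=1$ via the reflection-generated, irreducible action on $\Ev(Y)$, compute $b_2(X)=b_2(S)+1$ and $b_2(Y)=b_2(S)+c_2(\E)$ from the blow-up description of $Y$, and then invoke the Beltrametti--Sommese classification (Theorem~11.4.5 of~\cite{BelSom}) of very ample rank-$2$ bundles with $c_2(\E)=2$. The extra detail you supply on why $Y$ is the blow-up of $S$ at the zero locus of a general section is a welcome elaboration but not a different argument.
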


\begin{proof}
Observe that for any smooth threefold $X$ with a smooth hyperplane
section~$Y$, the assertions ``$X$ has $A_1$ monodromy'' and
``$b_2(Y)=b_2(X)+1$'' are equivalent. Indeed, the only reflection on a
one-dimensional linear space is $x\mapsto-x$, so if $b_2(Y)=b_2(X)+1$,
then the monodromy group that acts on the one-dimensional $\Ev(Y)$,
being generated by reflections, is isomorphic to $\Z/2\Z$,
whence~$A_1$; the opposite implication is trivial.
  
Now if $Y\subset X$ is a general smooth hyperplane section, where $X$
is a $\PP^1$-scroll over a surface~$S$, then the hyperplane
section~$Y$ is isomorphic to $S$ with $c_2(\E)$ points blown up, so
$b_2(Y)=b_2(S)+c_2(\E)$. Since $b_2(X)=b_2(S)+1$, the variety $X$ has
$A_1$ monodromy (equivalently, $b_2(Y)=b_2(X)+1$) if and only if
$c_2(\E)=2$, and Theorem~11.4.5 from~\cite{BelSom} does the job.
\end{proof}

\begin{note}
One observes that if $(X,\E)\cong (Q,\Oo_Q(1)\oplus \Oo_Q(1))$ then $X$
is the Segre variety $\PP^1\times\PP^1\times\PP^1\subset\PP^7$
(variety~$V_6$ in Table~\ref{table:Fano2} below), and if $(X,\E)\cong
(\PP^2,\Oo_{\PP^2}(1)\oplus \Oo_{\PP^2}(2))$ (variety $V_7$ in
Table~\ref{table:Fano2}) then $X$ is isomorphic to the projection of
the Veronese variety $v_2(\PP^3)\subset\PP^9$ from a point lying on
$v_2(\PP^3)$, or to an isomorphic projection of this projection.
\end{note}

For the sake of completeness we find the vanishing root system for
other scrolls over surfaces as well.

\begin{proposition}\label{NTL:P^1-bundles}
In the above setting,
if $c_2(\E)=1$, then $X^*$ is not a hypersurface in $(\PP^N)^*$, and if
$c_2(\E)=r>1$, then the vanishing root system of~$X$ is~$A_{r-1}$.
\end{proposition}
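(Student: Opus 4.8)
The plan is to compute the lattice $\ev(Y)$ together with its intersection form explicitly, then invoke Proposition~\ref{Deligne}: since we already know from Proposition~\ref{sommeses_list} (item~1) that the monodromy is finite, the vanishing root system $R$ must be one of $A$, $D$, $E$, and it suffices to pin it down by a discriminant/rank computation. Concretely, let $Y\subset X$ be a general smooth hyperplane section. As recalled in the proof of Proposition~\ref{c_2=2}, $Y$ is isomorphic to $S$ with $c_2(\E)=r$ points blown up, say $\sigma\colon Y\to S$ the blowdown with exceptional curves $\lst Er$. First I would observe that, by the Lefschetz hyperplane theorem applied to $Y\subset X$ (both of dimension $\ge 2$), the restriction $H^2(X,\Q)\to H^2(Y,\Q)$ is injective, and $\ev(Y)$ is the kernel of the Gysin map $i_*\colon H_2(Y,\Q)\to H_2(X,\Q)$, which under Poincaré duality is orthogonal to $i^*H^2(X,\Q)$.

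Next I would identify $\ev(Y)$ concretely. Writing $p\colon X=\PP^*(\E)\to S$ for the projection and noting $Y\to S$ is the blowup at $r$ points, one has $H^2(Y,\Q)\cong p^*H^2(S,\Q)\oplus\bigoplus_{j=1}^r\Q[E_j]$, while $H^2(X,\Q)\cong p^*H^2(S,\Q)\oplus\Q\,\xi$ where $\xi=c_1(\Oo_X(1))$. The classes in $H^2(Y,\Q)$ coming from $H^2(X,\Q)$ are $p^*H^2(S,\Q)$ together with $\Oo_Y(1)=\sigma^*A-\sum E_j$ for an appropriate divisor class $A$ on $S$ (one can read off $A$ and the relation $\Oo_X(1)|_Y$ from the scroll structure). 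Therefore $\ev(Y)$, which has dimension $b_2(Y)-b_2(X)=r-1$, is the orthogonal complement inside $\bigoplus\Q[E_j]\oplus\Q\,\Oo_Y(1)$ of the vector $\Oo_Y(1)$; equivalently it is spanned by the differences $[E_j]-[E_{j+1}]$, $1\le j\le r-1$, using that the $[E_j]$ are mutually orthogonal with self-intersection $-1$ and each orthogonal to $p^*H^2(S,\Q)$. With the form~\eqref{eq:()} (the negated intersection index on $H_2(Y,\Q)$ for a threefold), the vectors $e_j-e_{j+1}$ have square $2$ and consecutive ones have inner product $-1$, which is precisely the standard Cartan matrix of $A_{r-1}$; hence $R=A_{r-1}$.

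For the case $r=1$, the computation above gives $\dim\ev(Y)=0$, so $\Ev(Y)=0$; by the discussion in Section~\ref{Spaces-of-vanishing-cycles}, $\Ev(Y)\ne0$ precisely when $X^*$ is a hypersurface, so $c_2(\E)=1$ forces $X^*$ not to be a hypersurface. (Alternatively, one may note that a scroll with $c_2(\E)=1$ has a general one-dimensional linear section which is a rational curve meeting the discriminant trivially, but the cohomological argument is cleanest.) The one point requiring care — the main obstacle — is verifying that $\Oo_Y(1)$ is genuinely not orthogonal to the span of the $[E_j]$, i.e.\ that $\Oo_Y(1)\cdot E_j=1$ for each $j$, so that the orthogonal complement of $\Oo_Y(1)$ inside $\bigoplus\Q[E_j]\oplus\Q\,\Oo_Y(1)$ is spanned by the $e_j-e_{j+1}$ and not larger. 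This follows because each $E_j$ is a line in $\PP^N$ (the fiber of the blowdown $Y\to S$ is contracted to a point and has degree $1$ with respect to $\Oo_Y(1)$, as the hyperplane section separates points infinitely near), so $\Oo_Y(1)\cdot E_j=\deg E_j=1$. Once this is in place, the identification of the Cartan matrix with that of $A_{r-1}$ is immediate and Proposition~\ref{Deligne} finishes the argument; in particular the monodromy group is the Weyl group $S_r$ of $A_{r-1}$.
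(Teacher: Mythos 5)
Your computation of the quadratic space is correct and is essentially the one in the paper: $Y$ is $S$ blown up at the $r$ zeroes of the defining section, the exceptional curves $E_j$ are fibers of $p$ (hence lines, so $\Oo_Y(1)\cdot E_j=1$, as you verify), and $\ev(Y)$ is cut out of $\bigoplus\Q[E_j]$ by the single relation $\sum c_j=0$; the case $r=1$ is also handled correctly. The gap is in the final inference. You exhibit a $\Q$-basis $e_j-e_{j+1}$ of $\ev(Y)$ whose Gram matrix is the $A_{r-1}$ Cartan matrix and conclude ``hence $R=A_{r-1}$.'' That does not follow from the rational quadratic space alone: by Proposition~\ref{Deligne} the type of $R$ is governed by the \emph{integral} root lattice spanned by the vanishing cycles, and distinct simply laced root systems of the same rank can have isometric rational spans. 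For instance the $A_8$ root lattice sits inside the $E_8$ root lattice with index~$3$, so the $\Q$-span of an $A_8$ configuration also carries a copy of $E_8$; this is precisely the ambiguity the paper cannot resolve in the $m=8$ case of Proposition~\ref{ntl:ver.pencils}, so the step you are skipping is exactly the one that can fail.

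To close the gap you must pin down the root lattice over $\Z$: by Section~\ref{subseq:vanishing} it is the image of $\Ker\bigl(i_*\colon H_2(Y,\Z)\to H_2(X,\Z)\bigr)$. Since $p\circ i=\sigma$, any class killed by $i_*$ is killed by $\sigma_*$ and hence lies in $\bigoplus\Z l_j$, where $l_j=[E_j]$; and since all the $l_j$ have the same image in $H_2(X,\Z)$ (the fiber class of $p$), the kernel is exactly $\bigl\{\sum c_jl_j\colon c_j\in\Z,\ \sum c_j=0\bigr\}$, i.e.\ the $\Z$-span of your $e_j-e_{j+1}$. Once that is said you can finish either your way (the root lattice is then literally the $A_{r-1}$ lattice, whose norm-$2$ vectors form the $A_{r-1}$ system, and $R$ is an irreducible full-rank subsystem of these, hence all of them) or as the paper does, by computing the discriminant $[L^\vee:L]=r$ and noting that among irreducible simply laced root systems of rank $r-1$ only $A_{r-1}$ has this discriminant. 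The repair is short, but as written the proof omits the one step where the identification could genuinely go wrong.
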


\begin{proof}
Let $p\colon X\to S$ be the natural projection and let $Y\subset X$ be a
general smooth hyperplane section; $Y$ is isomorphic to $S$ with
$r$ points blown up. Let $\lst \ell r\subset Y\subset X$ be the
corresponding exceptional curves, which are lines on~$Y$ and fibers of
the projection $p$ on~$X$. If $l_j\in H_2(Y,\Z)$ is the class of
$\ell_j$, then the images of all the $l_j$ in $H_2(X,\Z)$ are equal,
whence the image of $\Ker(H_2(Y,\Z)\to H_2(X,\Z))$ in $H_2(Y,\Q)$,
with the pairing $(\cdot,\cdot)$ from Section~\ref{subseq:pairings}
(i.\,e., with the negated intersection index), is isomorphic to the
lattice
\begin{equation*}
  L=\left\{\sum_{j=1}^rc_jl_j\colon c_j\in\Z,\
  \sum_{j=1}^r c_j=0\right\},
\end{equation*}
where $(l_i,l_j)=\delta_{ij}$. If $r=1$, this implies that $\ev(Y)=0$,
so $X^*$ is not a hypersurface. If $r>1$, then $\rank L=r-1$, and a
well-known computation (see for example \cite[Planche~I, (VIII)]{LIE})
shows that $[L^\vee:L]=r$. For a simply laced irreducible root system
of rank~$r-1$, this is possible only if it is~$A_{r-1}$.
\end{proof}

We end Section~\ref{ntl:subseq:scrolls} with two by-products of
Proposition~\ref{NTL:P^1-bundles}.  The first one concerns the
monodromy action on zero loci of sections of vector bundles on
surfaces.
 
Suppose that $\E$ is a very ample bundle of rank~$2$ on a smooth
surface~$S$, $c_2(\E)=r$. Put $V=H^0(S,\E)$ and let $D\subset V$ be the
set of sections that are not transversal to the zero section. Any
section in $V\setminus D$ has precisely $r$ zeroes. Fix a section
$s_0\in V\setminus D$, and let $Z(s_0)=\{\lst pr\}$ be its zero
locus. As sections of \E vary in $V\setminus D$, the fundamental group
$\pi_1(V\setminus D,s_0)$ acts on $Z(s_0)$ by permutations. Let us say
that the image of $\pi_1(V\setminus D,s_0)$ in the group of
permutations of \lst pr is \emph{the monodromy group of sections
  of~$\E$}. 

\begin{proposition}\label{prop:zeroes-of-sections}
In the above setting, the monodromy group of sections of~$\E$ is the
entire group of permutations of the set $Z(s_0)$.
\end{proposition}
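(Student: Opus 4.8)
The plan is to identify the monodromy group of sections of $\E$ with the monodromy group of the scroll $X=\PP^*(\E)$ acting on the vanishing cycles, and then to invoke Proposition~\ref{NTL:P^1-bundles} together with Deligne's Proposition~\ref{Deligne}. First I would set up the dictionary between sections of $\E$ and hyperplane sections of $X$: a nonzero section $s\in V=H^0(S,\E)$ corresponds, via $p_*\Oo_{\PP^*(\E)|S}(1)=\E$, to a hyperplane $H_s\subset\PP^N$, and the zero locus $Z(s)\subset S$ is exactly the locus over which the fiber of $p\colon X\to S$ is contained in $H_s$; thus $s$ is transversal to the zero section (i.e. $s\in V\setminus D$) precisely when $Y_s=H_s\cap X$ is the blowup of $S$ at the $r=c_2(\E)$ points of $Z(s)$, with the exceptional curves $\ell_1,\dots,\ell_r$ being the fibers of $p$ over those points. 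So $V\setminus D$ is (an open subset closely related to) the complement of the dual variety $X^*$, and under the correspondence $p_i\leftrightarrow\ell_i\leftrightarrow[\ell_i]=l_i\in H_2(Y_s,\Z)$ the monodromy action on $Z(s_0)$ is intertwined with the monodromy action on the classes $l_1,\dots,l_r$.

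Next I would observe that this gives a surjection from the monodromy group of sections onto the group of permutations that the geometric monodromy induces on the set $\{l_1,\dots,l_r\}$, and that the latter permutation group is exactly the group of permutations of $\{l_1,\dots,l_r\}$ realized by the Weyl group $W(A_{r-1})\cong S_r$ of the vanishing root system. Here is the point: by Proposition~\ref{NTL:P^1-bundles} the vanishing root system of $X$ is $A_{r-1}$, realized concretely inside the lattice $L=\{\sum c_jl_j:\sum c_j=0\}$ with $(l_i,l_j)=\delta_{ij}$; the vanishing cycles $\tau_i$ are, up to sign, the differences $l_i-l_j$ (the roots of $A_{r-1}$ in this standard model), and by assertion~(2) of Section~\ref{subseq:vanishing} the monodromy group is transitive on them and equals $W(A_{r-1})=S_r$ acting in the usual way on $l_1,\dots,l_r$. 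Since the symmetric group $S_r$ acting on an $r$-element set by permutations is exactly the group of permutations of that set, the permutation group induced on $\{l_1,\dots,l_r\}$ is the full symmetric group. Pulling this back through the intertwining of the two actions, the monodromy group of sections of $\E$ surjects onto $S_r=\mathrm{Sym}(Z(s_0))$; as it is a priori a subgroup of $\mathrm{Sym}(Z(s_0))$, it must be all of it. The case $r\le1$ is trivial.

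The main obstacle I expect is the bookkeeping needed to make the intertwining precise: one must check that the loop in $V\setminus D$ that permutes the points $p_i$ induces, on $H_2(Y_{s_0},\Z)$, a homeomorphism carrying $l_i$ to $l_{\sigma(i)}$ for the same permutation $\sigma$, and that this is compatible with passing to a Lefschetz pencil $\ell\subset(\PP^N)^*$ (which a priori only gives monodromy in $\pi_1((\PP^N)^*\setminus X^*)$, while sections live in $V\setminus D$). The cleanest way around this is to note that $V\setminus D$ maps to $(\PP^N)^*\setminus X^*$ with connected fibers (the fiber over a transversal hyperplane is, up to scalars, a single point, since a section is determined by its associated hyperplane together with a scalar), so the two fundamental groups have the same image in the permutation group, and then to use that the exceptional curves $\ell_i$ are canonically attached to the geometry of the blowup $Y_{s}\to S$, hence are permuted exactly as the points $p_i$ are. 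One also needs the elementary fact that the $l_i$ are pairwise distinct classes generating, together with $H_2(S)$, the group $H_2(Y_s)$, so that a monodromy element fixing all $l_i$ fixes the corresponding points; this is immediate from $(l_i,l_j)=\delta_{ij}$. Everything else is the standard Picard--Lefschetz package already recalled in Section~\ref{sec:PLsurvey}.
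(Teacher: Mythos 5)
Your proposal is correct and follows essentially the same route as the paper: identify the monodromy of sections with the monodromy of the scroll $X=\PP^*(\E)$ acting on the exceptional classes $l_1,\dots,l_r$ of the blown-up hyperplane section, invoke Proposition~\ref{NTL:P^1-bundles} to see that this monodromy group is $W(A_{r-1})\cong S_r$, and conclude that a subgroup of $S_r$ surjecting onto $S_r$ is all of $S_r$. The extra care you take with the comparison of $\pi_1(V\setminus D)$ and $\pi_1((\PP^N)^*\setminus X^*)$ is a detail the paper leaves implicit, but it is handled correctly.
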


\begin{proof}
Denoting the group in question by~$\Gamma$, put $X=\PP^*(\E)$ and
embed $X$ into $\PP^N$ via the complete linear system
$|\Oo_{X|S}(1)|$. Let $Y\subset X$ be the smooth hyperplane section
of~$X$ corresponding to the section $s_0\in H^0(S,\E)\cong
H^0(X,\Oo_{X|S}(1))$; the surface $Y$ is isomorphic to the blowup of
$S$ at the points of $Z(s_0)$, so the monodromy transformations acting
on $H_2(Y,\Q)$ are induced by the permutations from $\Gamma$. Thus,
$\Gamma$ surjects onto the monodromy group
of~$X$. Proposition~\ref{NTL:P^1-bundles} implies that the monodromy
group of $X$ is the symmetric group~$S_r$, so $\Gamma$ surjects
onto~$S_r$. Since, on the other hand, $\Gamma$ is a subgroup of the
permutation group $\Aut(Z(s_0))\cong S_r$, we conclude that
$\Gamma=\Aut(Z(s_0))$.
\end{proof}

Another by-product of Proposition~\ref{NTL:P^1-bundles} is an
assertion to the effect that if
$X\subset\PP^n$ is a scroll over a surface then the curve that is its
general linear section of codimension~$2$ cannot be ``too special''.

If $\pi\colon C\to C_1$ is a finite morphism of (smooth, projective,
and connected) algebraic curves and if $x_0\in C_1\setminus B$, where
$B\subset C_1$ is the branch locus of $\pi$, then $\pi_1(C_1\setminus
B)$ acts on $\pi^{-1}(x_0)$; the induced subgroup in the group of
permutations of $\pi^{-1}(x_0)$ will be called \emph{monodromy group
  of the fibers of~$\pi$}.

\begin{proposition}\label{linear-section}
Suppose that $X=\PP^*(\E)\subset\PP^N$, where $\E$ is a rank $2$ very
ample bundle on a smooth surface~$S$, is embedded with
$|\Oo_{X|S}(1)|$. If $c_1(\E)=r$ and $C$ is a general one-dimensional
linear section of $X$, then there exists a morphism $\pi\colon
C\to\PP^1$ such that $\deg\pi=r$ and the monodromy group of the fibers
of~$\pi$ is the entire symmetric group~$S_r$.  
\end{proposition}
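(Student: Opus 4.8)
The statement concerns a three-dimensional scroll $X = \PP^*(\E)$ over a surface $S$, and asks for a degree-$r$ map $\pi\colon C\to\PP^1$ on its general curve section $C$ whose fiber monodromy is the full symmetric group, where $r = c_1(\E)$. (Note the degree is governed by $c_1(\E)$ here, \emph{not} $c_2(\E)$ as in Proposition~\ref{NTL:P^1-bundles}; this is because cutting $X = \PP^*(\E)$ by one hyperplane produces another scroll, now over a \emph{curve}, whose fiber structure is what we exploit.) The plan is to reduce to the one-dimensional base situation and then invoke Proposition~\ref{NTL:P^1-bundles} on that scroll.

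\textbf{Step 1: produce the map $\pi$.} Let $H_1 \subset \PP^N$ be a general hyperplane and set $S_1 = p^{-1}(H_1 \cap \ldots)$—more precisely, let $X_1 = X \cap H_1$, which is a general smooth hyperplane section of the scroll. Since $p\colon X \to S$ restricts on $X_1$ to a morphism whose general fibre is a line $\PP^1$ mapped isomorphically (a general hyperplane meets each ruling line in one point), $X_1$ is again a $\PP^1$-scroll, this time over a curve $B = $ the image, which is a general hyperplane section of $S$ (a smooth projective curve). Thus $X_1 = \PP^*(\E_1)$ for a rank-$2$ bundle $\E_1 = \E|_B$ on the curve $B$, with $\Oo_{X_1}(1) = \Oo_{X_1|B}(1)$. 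Now take a second general hyperplane $H_2$ and put $C = X_1 \cap H_2$; this is the general one-dimensional linear section. Restricting the scroll projection $X_1 \to B$ to $C$ gives a finite morphism $q\colon C \to B$ whose degree is the number of points in which a general hyperplane of the embedding meets a general ruling $\PP^1$ of $X_1$; since $\Oo_{X_1}(1)$ restricts to $\Oo_{\PP^1}(\text{something})$ on each ruling line, a short computation with $c_1(\E)$ shows this degree is exactly $r = c_1(\E)$ (this is where the Chern-class bookkeeping enters: the degree of $\Oo_X(1)$ on a line section of the ruling of $X$ equals $c_1$ of the restricted bundle). Finally, compose $q$ with the degree-one, or rather a suitable, projection; but actually $B$ need not be $\PP^1$. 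The fix is: instead of projecting to $B$, realise that $C$ sits in $X_1 = \PP^*(\E_1)$, and project $X_1$ from a linear space to a $\PP^1$-bundle over $\PP^1$... This is the delicate point—see below.

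\textbf{Step 2: identify the fiber monodromy with a Picard--Lefschetz monodromy group.} Once we have $\pi\colon C\to\PP^1$ of degree $r$ realised as the restriction of the ruling projection of a $\PP^1$-scroll $X_1' = \PP^*(\E_1')$ over $\PP^1$, with $C$ a general hyperplane section of $X_1'$, we argue exactly as in the proof of Proposition~\ref{prop:zeroes-of-sections}: the general hyperplane section $C$ of the scroll $X_1'$ is isomorphic to $\PP^1$ with $c_2(\E_1')$ points blown up, the exceptional points being the fibres of $\pi$ over the branch points; monodromy of hyperplane sections of $X_1'$ permutes these, and induces the full Picard--Lefschetz monodromy group on $H_0$ of the fibre (equivalently $H_1$ of $C$ via the blown-up structure), which by Proposition~\ref{NTL:P^1-bundles} is $S_{r}$ provided $c_2(\E_1') = r$. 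So one needs the reduction in Step~1 to arrange $c_2(\E_1') = c_1(\E)$, i.e. to pass from the rank-$2$ bundle $\E$ on the surface $S$ to a rank-$2$ bundle on $\PP^1$ whose $c_2$ equals $c_1(\E)$. Cutting $S$ by a line (hyperplane of a possibly re-embedded $S$) and restricting $\E$ gives $\E|_{\text{line}}$ on $\PP^1$, and $c_2$ of a rank-$2$ bundle on a curve doesn't make sense—so what we really want is: realise $C$ as a hyperplane section of the scroll $\PP^*(\E|_{B})$ over the \emph{curve} $B$, then further cut that scroll itself down, and use that the fibre monodromy we want is precisely the monodromy group of $\PP^*(\E|_B) \subset \PP^{N-1}$, which by the argument in Proposition~\ref{NTL:P^1-bundles} is $S_{c_2(\E|_B \text{ thought of over } \PP^1)}$...

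\textbf{The main obstacle.} The genuine difficulty is the bookkeeping in Step~1: correctly identifying which rank-$2$ bundle on $\PP^1$ (or which $\PP^1$-scroll over $\PP^1$) has $C$ as a general hyperplane section, and verifying that its relevant second Chern number equals $r = c_1(\E)$ rather than some other combination of the Chern classes of $\E$ and the hyperplane class of $S$. The cleanest route is: $X_1 = X\cap H_1 = \PP^*(\E_1)$ with $\E_1$ of rank $2$ on the curve $B = S \cap H_1$; then $C = X_1 \cap H_2$ is, as a hyperplane section of this $\PP^1$-scroll over a \emph{curve}, isomorphic to $B$ itself union nothing—no, rather $C \to B$ is an $r$-sheeted cover (not a blowup, since the base is a curve and $C$ is a curve). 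Here the right statement of Proposition~\ref{NTL:P^1-bundles} to invoke is its content at the level of the scroll $X_1$ over $B$: I would instead take $\pi$ to be the composite $C \hookrightarrow X_1 \to B \to \PP^1$ using a general pencil on $B$—but that inflates the degree. So the correct construction is: let $X_1 = \PP^*(\E_1)$ over the curve $B$, embedded in $\PP^{N-1}$ by $|\Oo_{X_1|B}(1)|$; the \emph{monodromy group} of $X_1$ (a threefold-minus-one-dimension situation, i.e.\ the surface scroll $X_1$ with curve sections) permutes the $r = \deg(C/B)$ points of a general ruling-fibre intersection, and this monodromy group is computed, exactly as in Proposition~\ref{NTL:P^1-bundles}, to be $S_r$ with $r$ the degree of the tautological bundle on the ruling fibres, which is $c_1(\E|_B) = c_1(\E)\cdot H|_S = c_1(\E)$ under the normalisation in the hypothesis. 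Verifying this last numerical identity $r = c_1(\E)$, and that $\pi$ so constructed indeed has degree $r$ and has the asserted $\PP^1$ as target (one checks $B$ maps to a line under a suitable further projection, or simply renames: the problem only requires \emph{some} morphism $C \to \PP^1$, and the composite $C \to B \hookrightarrow \PP^1$ works when $B$ itself is rational, which it need not be)—this is the crux, and I expect it to require a careful, though routine, chase through the scroll geometry and intersection theory, together with the observation that after the first hyperplane cut one is in precisely the $\PP^1$-bundle-over-a-curve setting where Proposition~\ref{NTL:P^1-bundles}'s proof applies verbatim with $\PP^1$ replaced by the rational base and the blown-up-points description replaced by the branched-cover description.
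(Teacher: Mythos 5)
Your Step~1 fails at its first move, and nothing after it recovers. You claim that $X_1=X\cap H_1$ is a $\PP^1$-scroll over the curve $B=S\cap H_1$, with $X_1=\PP^*(\E|_B)$. This is false: as you yourself note in the parenthetical, a general hyperplane meets each line of the ruling in exactly one point, so $p|_{X_1}\colon X_1\to S$ is generically one-to-one, i.e.\ \emph{birational onto $S$} --- indeed $X_1$ is the blowup of $S$ at the $c_2(\E)$ zeroes of the section of $\E$ defining $H_1$ (this is exactly how the paper uses hyperplane sections in Propositions~\ref{c_2=2} and~\ref{NTL:P^1-bundles}). The surface $p^{-1}(B)=\PP^*(\E|_B)$ is a different surface and is not a hyperplane section of $X\subset\PP^N$. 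Consequently there is no finite map $q\colon C\to B$ of the kind you describe (and if there were, its degree would be $1$, since the ruling lines have degree $1$ in $\PP^N$); the subsequent attempts to salvage the target $\PP^1$ and the degree count are, as you concede, left unresolved. A second source of confusion is the Chern class: the degree of the map one actually constructs is the number of zeroes of a general section of $\E$, i.e.\ $c_2(\E)$ (consistent with Propositions~\ref{NTL:P^1-bundles} and~\ref{prop:zeroes-of-sections}); the ``$c_1(\E)=r$'' in the statement is best read as $c_2$, and your attempt to engineer the answer $c_1(\E)$ is chasing the wrong invariant.

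The construction the proposition wants is dual to yours: the target $\PP^1$ is not a curve inside $\PP^N$ but a general Lefschetz pencil $\ell\subset(\PP^N)^*$ whose axis $L$ ($\dim L=N-2$) cuts out $C=X\cap L$. For $x\in C$ one sets $\pi(x)=\alpha$ where $H_\alpha=\langle L,\,f_x\rangle$ and $f_x$ is the ruling line through $x$. A transversal member $H_\alpha$ contains exactly $r=c_2(\E)$ ruling lines (the fibers over the zeroes of the corresponding section of $\E$), and each meets the hyperplane $L\subset H_\alpha$ in one point of $C$; hence $\deg\pi=r$. The fiber $\pi^{-1}(\alpha)$ is thereby identified with the set of exceptional lines on the blown-up surface $X\cap H_\alpha$, so the fiber monodromy of $\pi$ is the monodromy permutation of those lines, which induces the full monodromy group on $\ev(X\cap H_\alpha)$; since that group is $S_r$ acting in the standard way by Proposition~\ref{NTL:P^1-bundles}, the permutation group must be all of $S_r$. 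Your Step~2 has the right instinct (reduce to Proposition~\ref{NTL:P^1-bundles}), but without the pencil construction there is no degree-$r$ map to attach it to.
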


\begin{note}
The assertion that $C$ possesses a morphism of degree $r$ onto
$\PP^1$ is a particular case of Theorem~11.1.2(2)
from~\cite{BelSom}. It is only the second part of this proposition
that claims to novelty.  
\end{note}

\begin{proof}
By lines of the ruling we will mean the lines on $X$
that are fibers of the natural projection~$X\to S$.

Suppose that $\ell\subset(\PP^N)^*$ is a general Lefschetz pencil and
$L\subset \PP^N$, where $\dim L=N-2$, is its axis. For a general
$\ell$, the linear subspace~$L$ does not contain any line of the
ruling of~$X$, so we may and will assume that this is the
case. Putting $C=X\cap L$, define the mapping $\pi\colon C\to \ell$ as
follows. For any $x\in C$, put $\pi(x)=\alpha\in \ell$, where the
hyperplane $H_\alpha\subset \PP^N$ is the linear span of~$L$ and the
line of the ruling passing through~$x$. (This is just a geometric
description of the morphism~$C'\to\PP^1$ from the proof
of~\cite[Theorem~11.1.2(2)]{BelSom}.) If $\alpha\in\ell$ is such that
$H_\alpha$ is transversal to $X$, then $H_\alpha\cap X$ is isomorphic to the
blowup of $S$ at the points \lst sr that are the zeroes of the section
$\sigma\in H^0(S,\E)=H^0(X,\Oo_X(1))$ defining the
hyperplane~$H_\alpha$, and $H_\alpha$ contains precisely $r$ lines of
the ruling that are fibers over \lst sr.
If $\alpha\in \ell$ is such that $H_\alpha\cap
X$ is singular, then $H_\alpha$ contains precisely $r-1$ lines of the
ruling, on one of which the unique singular point of $H_\alpha\cap X$
sits. 

If we fix an $\alpha_0\in\ell$ for which $H_\alpha\cap X$ is smooth,
then it is clear from the proof of Proposition~\ref{NTL:P^1-bundles}
that the action of the monodromy group on $\ev(X\cap H_\alpha)$ is
induced by the monodromy permutations of lines of the ruling lying on
$X\cap H_\alpha$, i.\,e., of the points of fiber $\pi^{-1}(\alpha)$. Thus, the
latter group of permutations must be the entire~$S_r$.
\end{proof}

\subsection{Pencils of quadrics}\label{NTL:generalities}

Throughout this section $X\subset\PP^N$ will be a pencil of quadrics
in the sense of Proposition~\ref{sommeses_list} (item~(2) in
Table~\ref{table:Sommese's.list}).  That is, $X\subset\PP^N$ is a
smooth projective threefold and $p\colon X\to C$ is a morphism onto a
smooth curve~$C$ such that its fibers are isomorphic to quadrics
in~$\PP^3$. For each $t\in C$, put $X_t=p^{-1}(t)$.

It is well known that the subset $S\subset C$ of points such that
fibers over them are not smooth is finite and fibers over the points of~$S$
are quadratic cones of corank one.

We begin with a simple observation.

\begin{lemma}\label{POQ=>ev>0}
If $X\subset\PP^N$ is a pencil of quadrics and $Y\subset X$ is a
smooth hyperplane section, then $\dim\ev(Y)>0$.   
\end{lemma}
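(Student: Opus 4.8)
The plan is to avoid computing $\ev(Y)$ directly and instead to combine two facts recalled earlier. First, $\dim\ev(Y)=\dim\Ev(Y)$ (because $P\colon\Ev(Y)\to\ev(Y)$ is an isomorphism), and $\Ev(Y)\ne0$ as soon as $X^{*}$ is a hypersurface, as stated in Section~\ref{Spaces-of-vanishing-cycles}. Second, Ein's theorem quoted in the introduction (\cite[Theorem~3.2]{Ein1}): the dual of a smooth threefold $X\subset\PP^{N}$ fails to be a hypersurface only when $X$ is a $\PP^{2}$-scroll over a curve. Granting these, it suffices to show that a pencil of quadrics is not a $\PP^{2}$-scroll over a curve; then $X^{*}$ is a hypersurface, hence $\Ev(Y)\ne0$, hence $\dim\ev(Y)>0$.

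The remaining geometric step is elementary. Suppose, for contradiction, that $X$ also carried a $\PP^{2}$-scroll structure $q\colon X\to C'$. Each fibre $F=q^{-1}(c')$ is a linearly embedded $\PP^{2}$; since there is no non-constant morphism from $\PP^{2}$ to a smooth curve, the restriction $p|_{F}$ is constant, so $F$ lies inside a single fibre $X_{t}$ of $p$. But, by the description of the fibres recalled above, $X_{t}$ is either a smooth quadric surface or a quadric cone of corank one, and neither contains a $2$-plane of $\PP^{3}$: the linear form cutting out such a plane cannot divide the (irreducible) defining quadratic equation of $X_{t}$. This contradiction shows that $X$ is not a $\PP^{2}$-scroll over a curve, and the lemma follows. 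One can say the same thing even more crudely: the general fibre of $p$ is $\PP^{1}\times\PP^{1}$, which is not isomorphic to the general fibre $\PP^{2}$ of a $\PP^{2}$-scroll.

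Along this route there is no serious difficulty; the only point deserving attention is to keep the Picard--Lefschetz bookkeeping of Section~\ref{sec:PLsurvey} straight, so that ``$X^{*}$ is a hypersurface'' really does force $\ev(Y)\ne0$. A more self-contained but bumpier alternative would work with the conic bundle $p|_{Y}\colon Y\to C$ and try to produce an explicit non-zero class in $\ev(Y)$ --- for instance the difference of the two components of a reducible fibre, whose image in $H_{2}(X,\Q)$ one would then analyse according to whether the monodromy of the family $\{X_{t}\}$ interchanges the two rulings of a smooth fibre. On that path I would expect the genuine obstacle to be showing that $p|_{Y}$ has a reducible fibre at all --- equivalently, that the discriminant of this conic bundle has positive degree --- which is precisely the input that Ein's theorem supplies for free.
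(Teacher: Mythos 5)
Your proposal is correct and follows essentially the same route as the paper: the paper likewise reduces to Ein's characterization of threefolds with small dual variety (it quotes the ``swept by planes'' form, \cite[Theorem 5.1]{Ein2}, rather than the $\PP^2$-scroll form) and then observes that any such plane would have to lie in a fibre of $p$, which is impossible since the fibres are quadric surfaces. The only cosmetic difference is that you argue by contrapositive ($X$ is not a $\PP^2$-scroll, hence $X^*$ is a hypersurface, hence $\Ev(Y)\neq0$) while the paper starts from $\ev(Y)=0$ and derives the absurdity directly.
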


\begin{proof}
If $\ev(Y)=0$, then the dual variety $X^*$ is not a hypersurface, so
$X$ is swept by planes (see \cite[Theorem 5.1]{Ein2}). Since any
morphism from $\PP^2$ to a curve is constant, these planes should be
contained in the fibers of the morphism~$p\colon X\to C$, which is
absurd.
\end{proof}

If $t_0\in C\setminus S$, then $\pi_1(C\setminus S,t_0)$ acts by
monodromy on $H_2(X_{t_0},\Q)$. Since this action preserves the
intersection index and the class of the hyperplane section of the
smooth quadric~$X_{t_0}$, the monodromy transformation is either
identity, or it swaps the classes of the lines $\ell,m\subset X_{t_0}$
of different rulings.

We will distinguish between two types of pencils of quadrics.

\begin{definition}
Let us say that a pencil of quadrics $p\colon X\to C$ is
\emph{ordinary} if this monodromy action on $H_2(X_t,\Q)$ is
nontrivial, and that it is \emph{extraordinary} if this action is
trivial.
\end{definition}

\begin{note}\label{NTL:sing=>swap}
If $S\ne\varnothing$, then the pencil is ordinary. Indeed, if $t$
travels along a small circle around a point in~$S$, then the classes
of the lines of two rulings $\ell,m\subset X_t$ are interchanged.
\end{note}

Let $Y\subset X$ be a smooth hyperplane section, and let $\pi\colon
Y\to C$ be the restriction of $p$ to~$Y$. It is clear that singular
fibers of $\pi$ are pairs of different intersecting lines.  Let
$T=\{\lst tm\}\subset C$ be the set of points such that fibers
of~$\pi$ over them are singular. Observe that $T\ne\varnothing$: a
simple dimension count shows that no hyperplane in $\PP^N$ can be
transversal to all the fibers of $p\colon X\to C$.

For each $t_j\in T$, choose once and for all a line $\ell_j\subset
\pi^{-1}(t_j)$. Each $\ell_j$ is a $(-1)$-curve on $Y$; blowing down
all the $\ell_j$, one obtains a smooth surface $Y'$ which is a
$\PP^1$-bundle over $C$. Let $\sigma\colon Y\to Y'$ be the blowdown
morphism. A standard argument using the fact that the field of
rational functions on a curve is a $C_1$ field shows that the
projection $\pi'\colon Y'\to C$ has a section, so there exists a
divisor $D\subset Y'$ having intersection index~$1$ with all the
fibers of the projection $\pi'$. Let $e\in H_2(Y,\Z)$ be the class of
the divisor $\sigma^*D$, and let $f\in H_2(Y,\Z)$ be the class of the
divisor that is $\sigma^*$ of a fiber of the projection $\pi'$. It is
clear that $H_2(Y,\Z)$ is a free abelian group with basis $\langle
e,f, \lst lm\rangle$, where $l_j$, $1\le j\le m$, are the classes of
$\ell_j$. The class $e$ has intersection index one with~$f$ and zero
with each~$l_j$.

\begin{proposition}\label{NTL:ord-quadr-pencils}
Suppose that $X\subset \PP^N$ is an ordinary pencil of quadrics over a
curve~$C$ and $Y\subset X$ is a smooth hyperplane section; let $m$ be
the number of degenerate fibers of the induced morphism $\pi\colon
Y\to C$.  Then $\dim\ev(Y)=m$; the vanishing root system of~$X$ is
$D_m$ if $m\ge4$, it is $A_3$ if $m=3$, and the cases $m=1,2$ are
impossible.
\end{proposition}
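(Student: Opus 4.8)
The plan is to compute the lattice $\ev(Y)\subset H_2(Y,\Q)$ explicitly using the basis $\langle e,f,\lst lm\rangle$ of $H_2(Y,\Z)$ described just before the statement, together with the intersection form, and then to invoke Deligne's Proposition~\ref{Deligne} to pin down the root system from its rank and discriminant (the index $[L^\vee:L]$), exactly as in the method of Manin attributed to Deligne. First I would identify $\ev(Y)=\Ker(i_*\colon H_2(Y,\Q)\to H_2(X,\Q))$ concretely: the $m$ lines $\ell_j$ of the degenerate fibres all map, under the blowdown picture on $X$, to a line of one of the two rulings of a smooth quadric fibre, and since the pencil is ordinary these two ruling classes are identified in $H_2(X,\Q)$; meanwhile $e$ and $f$ restrict to independent classes that survive in $H_2(X,\Q)$. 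One should check that $\dim H_2(X,\Q)=\dim H_2(Y,\Q)-m$, i.e. $b_2(X)=3$ (this uses that a pencil of quadrics over $C$ has $b_2$ determined by $C$ together with the Lefschetz hyperplane theorem, or a direct Leray/deformation argument on $X\to C$), from which $\dim\ev(Y)=m$ follows.

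Next I would exhibit an explicit spanning set of $\ev(Y)$. The natural candidates are the classes of the $m$ lines $m_j$ of the fibre $\pi^{-1}(t_j)$ \emph{complementary} to the chosen $\ell_j$, namely $m_j = f - l_j$ (since $\ell_j$ and $m_j$ together form the fibre class $f$, up to the exceptional corrections), so that $l_j - m_j = 2l_j - f$ lies in $\ev(Y)$; more usefully the differences $l_j - l_k$ and the elements $f-2l_j$ span a sublattice of $\ev(Y)$. I would then compute the Gram matrix of $\ev(Y)$ with respect to the form $(\cdot,\cdot)$ from \S\ref{subseq:pairings} (the negated intersection index): since $(l_i,l_j)=\delta_{ij}$, $(f,f)=0$, $(e,f)=1$, $(e,l_j)=0$, and $(f,l_j)=0$, one finds that $\ev(Y)$ is spanned by vectors $v_j$ with $(v_j,v_j)=2$, so by Proposition~\ref{Deligne} the vanishing root system is simply laced of rank $m$; computing $[\ev(Y)^\vee:\ev(Y)]$ via Remark~\ref{newnote} then distinguishes $D_m$ (discriminant $4$) from $A_m$ (discriminant $m+1$) and from $D_4$ versus $A_3\oplus A_1$ type degeneracies at small $m$. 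For $m=3$ the lattice of rank $3$ with the relevant Gram matrix turns out to be $A_3$ (consistent with the exceptional isomorphism $D_3\cong A_3$), and for $m=1,2$ one shows no irreducible simply laced root system has the required rank and invariants given that the monodromy acts irreducibly on $\ev(Y)$ — the rank-$1$ case would force $A_1$ but the geometry (irreducibility plus the presence of the ruling swap) rules it out, and rank $2$ would force $A_2$, whose discriminant $3$ is incompatible with the computed discriminant $4$.

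The main obstacle I expect is not the lattice bookkeeping but justifying that $\ev(Y)$ is \emph{exactly} the span of the classes $f-2l_j$ (equivalently $l_j-m_j$) rather than a finite-index overlattice: one must rule out extra vanishing cycles coming from the base curve $C$ or from sections of the quadric bundle. For this I would argue that a Lefschetz pencil for $X$ has its critical points exactly over the points where the hyperplane becomes tangent along a ruling line, i.e. the vanishing cycles are precisely (a $\Z$-basis related to) the $m$ pairs $(\ell_j,m_j)$, and invoke the part of Proposition~\ref{Deligne} asserting that the root lattice is spanned by $\tau_1,\dots,\tau_r$; combined with $\dim\ev(Y)=m$ this forces the root lattice to be generated by $m$ norm-$2$ vectors, and one then checks by hand which simply laced root lattice of rank $m$ has a generating set of $m$ roots with the computed pairwise inner products (the Gram matrix obtained has the shape of the $D_m$ Cartan-type matrix after a suitable choice, giving $D_m$ for $m\ge4$, $A_3$ for $m=3$, and an immediate contradiction for $m\le2$). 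A secondary, smaller obstacle is handling the ordinariness hypothesis correctly: it is exactly what guarantees the two ruling classes are identified in $H_2(X,\Q)$, hence what makes all $m$ of the $f-2l_j$ nonzero and independent in $\ev(Y)$; in the extraordinary case this fails and is treated separately (outside the scope of this proposition).
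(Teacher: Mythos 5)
Your overall route is the same as the paper's: write a class in $\Ker i_*$ as $ae+bf+\sum c_jl_j$, use ordinariness to get $i_*(l_1)=\dots=i_*(l_m)$ and $i_*(f)=2i_*(l_j)$, deduce $a=0$ and $\sum c_j=-2b$, recognize the resulting lattice (after the substitution $\tilde l_j=l_j-\tfrac12 f$) as the $D_m$ root lattice with discriminant $4$, and conclude by the classification of simply laced irreducible root systems (with Lemma~\ref{POQ=>ev>0} excluding $m=0$). The core computation is right, but three local points need repair. First, the arithmetic: $b_2(Y)=m+2$, so the expected value is $b_2(X)=2$, not $3$; in any case no separate computation of $b_2(X)$ is needed, since the two linear conditions $a=0$, $\sum c_j+2b=0$ on $H_2(Y,\Q)$ already give $\dim\ev(Y)=m$. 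Second, your candidate vanishing cycles $f-2l_j=\ell_j-m_j$ have $(v,v)=4$, not $2$, so by property~(3) of Section~\ref{subseq:vanishing} they cannot be vanishing cycles (the actual roots of this $D_m$ lattice are $l_i-l_j$ and $l_i+l_j-f=\ell_i-m_j$ for $i\ne j$); simply-lacedness should be taken from Proposition~\ref{Deligne} rather than from a norm computation on your spanning set. Third, and most importantly, the step you single out as the main obstacle --- that the root lattice is \emph{exactly} $\Ker(i_*\colon H_2(Y,\Z)\to H_2(X,\Z))$ and not a finite-index sublattice --- should not be settled by locating the critical points of the Lefschetz pencil (your identification of the resulting cycles is wrong, as above); it follows at once from the exact sequence $\Z^r\xrightarrow{A}H_2(Y,\Z)\to H_2(X,\Z)\to 0$ of Section~\ref{subseq:vanishing}, which says the $\tau_i$ span the integral kernel, combined with the last clause of Proposition~\ref{Deligne}. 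With these corrections your argument coincides with the one in the paper.
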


\begin{proof}
Let $i\colon Y\hookrightarrow X$ be the embedding and let $i_*\colon
H_2(Y,\Z)\to H_2(X,\Z)$ be the induced homomorphism. Suppose that
\begin{equation}\label{ntl:quadrics:relation}
ae +bf+\sum_{j=1}^m c_jl_j\in\Ker i_*.   
\end{equation}
Intersecting $i_*$ of the left-hand side
of~\eqref{ntl:quadrics:relation} with the fiber of~$p$, one obtains
$a=0$. Since the pencil $X$ is ordinary, all the $i_*(l_j)$ are
equal to the same element $l\in H_2(X,\Z)$, and $i_*(f)=2l$. Thus,
\eqref{ntl:quadrics:relation} is equivalent to
\begin{equation}\label{ntl:quadrics:relation2}
  c_1+\dots+c_m+2b=0,
\end{equation}
so the root lattice of the vanishing root system of $X$ is
\begin{equation*}
L=\{bf+\sum_{j=1}^m c_jl_j\colon c_1+\ldots+c_m=-2b\}. 
\end{equation*}
If $(\cdot,\cdot)$ is the pairing on $H_2(Y,\Q)$ defined as
in~\eqref{eq:()}, Section~\ref{subseq:pairings}, then in $H_2(Y,\Q)$
one has $(f,l_j)=(f,f)=0$ for all $j$ and
$(l_i,l_j)=\delta_{ij}$. Putting $\tilde l_i=l_i-\frac12f$, one has
$(\tilde l_i,\tilde l_j)=\delta_{ij}$ and
\begin{equation}\label{eq:D_m}
  L=\left\{\sum_{j=1}^m c_j\tilde l_j\colon c_j\in\Z,\ \sum_{j=1}^m c_j\equiv
  0\pmod 2\right\},\quad\text{where $(\tilde l_i,\tilde l_j)=\delta_{ij}$.}
\end{equation}
It follows from Lemma~\ref{POQ=>ev>0} that $m>0$.

A well-known computation (see for example
\cite[Planche IV, (VIII)]{LIE}) shows that the order of the group
$L^\vee/L$ equals~$4$. So, the
vanishing root system is a simply laced irreducible root system of
rank~$m$ and the index of the root lattice in the weight lattice
is~$4$. For $m\ge 4$, such a system is $D_m$; for $m=3$, such a system
is~$A_3$; for $m=1$ or~$2$, such a system does not exist.
\end{proof}

\begin{proposition}\label{NTL:extraord.quadr}
Suppose that $X\subset\PP^N$ is an extraordinary pencil of quadrics
over a curve~$C$ and $Y\subset X$ is a smooth hyperplane section; let
$m$ be the number of degenerate fibers of the induced morphism $Y\to
C$. Then $\dim\ev(Y)=m-1$ and the vanishing root system of $X$ is
$A_{m-1}$.
\end{proposition}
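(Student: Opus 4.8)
The plan is to run essentially the same computation as in the proof of Proposition~\ref{NTL:ord-quadr-pencils}, but now exploiting the fact that the pencil is \emph{extraordinary}, so the monodromy on the fibers $H_2(X_t,\Q)$ is trivial. The crucial difference is that in the extraordinary case the two rulings are globally distinguished: the classes of the lines $\ell,m\subset X_t$ do not get swapped, so the $i_*(l_j)\in H_2(X,\Q)$ need no longer all be equal. First I would choose, for each degenerate fiber $\pi^{-1}(t_j)$, the distinguished line $\ell_j$ consistently, i.e.\ so that all $i_*(l_j)$ equal a single class $l\in H_2(X,\Q)$ belonging to one of the two rulings; this is possible precisely because the pencil is extraordinary. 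Then $i_*(f)=i_*(l_j)+i_*(m_j)$, where $m_j$ is the complementary line in the same fiber, and since the other ruling also gives a well-defined class $l'$ we get $i_*(f)=l+l'$ with $l,l'$ linearly independent in $H_2(X,\Q)$ (as $f$ has self-intersection $0$ but meets a general fiber transversally).

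\textbf{Key steps.} The computation of $\Ker i_*$ is then as follows. Writing a general element of $H_2(Y,\Z)$ as $ae+bf+\sum_{j=1}^m c_jl_j$, intersecting $i_*$ of it with a fiber of $p$ forces $a=0$ as before. Next, $i_*(bf+\sum c_jl_j)=b(l+l')+(\sum c_j)l$. Since $l$ and $l'$ are linearly independent in $H_2(X,\Q)$, this vanishes if and only if $b=0$ and $\sum_{j=1}^m c_j=0$. Hence the root lattice of the vanishing root system is
\begin{equation*}
L=\Bigl\{\sum_{j=1}^m c_jl_j\colon c_j\in\Z,\ \sum_{j=1}^m c_j=0\Bigr\},\qquad (l_i,l_j)=\delta_{ij}.
\end{equation*}
This is exactly the $A_{m-1}$ root lattice already encountered in the proof of Proposition~\ref{NTL:P^1-bundles}: it has rank $m-1$, and the well-known computation $[L^\vee:L]=m$ (cf.\ \cite[Planche~I, (VIII)]{LIE}) identifies it, among simply laced irreducible root systems of rank $m-1$, uniquely as $A_{m-1}$. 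In particular $\dim\ev(Y)=m-1$, and Lemma~\ref{POQ=>ev>0} gives $\dim\ev(Y)>0$, so $m\ge2$ and the root system is nonempty; the case $m=1$ would force $\ev(Y)=0$, contradicting Lemma~\ref{POQ=>ev>0}, so it is automatically excluded.

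\textbf{Main obstacle.} The step requiring the most care is the claim that in the extraordinary case one can choose the lines $\ell_j$ so that all their classes $i_*(l_j)$ coincide, and that the resulting classes $l$ and $l'$ of the two rulings are linearly independent in $H_2(X,\Q)$. The first part is where extraordinariness is used: triviality of the monodromy on $H_2(X_t,\Q)$ means the local systems of the two ruling classes over $C\setminus S$ are constant, hence each degenerate fiber $\pi^{-1}(t_j)$, being a pair of lines that are limits of lines in the two rulings of the nearby smooth fibers, has its two components canonically labelled by ruling; choosing $\ell_j$ to be the component in the first ruling makes all $i_*(l_j)$ equal (this also uses $S=\varnothing$, which holds by Remark~\ref{NTL:sing=>swap} since the pencil is extraordinary). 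The linear independence of $l$ and $l'$ then follows because their sum is $i_*(f)$, which pairs nontrivially with, say, $i_*(e)$ (as $e\cdot f=1$ on $Y$), while each of $l,l'$ is the class of a line lying in a single fiber of $p$, so the two classes are genuinely distinct and, together with the fact that $l\cdot l'=1$ but $l\cdot l=l'\cdot l'=0$ inside a smooth fiber, cannot be proportional.
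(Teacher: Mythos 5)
Your overall strategy coincides with the paper's: everything reduces to showing that the two ruling classes $l$ and $l'$ are linearly independent in $H_2(X,\Q)$, after which $b(l+l')+(\sum c_j)l=0$ forces $b=0$ and $\sum_j c_j=0$, the kernel lattice is the standard $A_{m-1}$ lattice from the proof of Proposition~\ref{NTL:P^1-bundles}, and Lemma~\ref{POQ=>ev>0} rules out $m\le 1$. The identification of the degenerate fibers of $\pi$ as pairs of lines, one from each globally defined ruling, and the consistent choice of the $\ell_j$ in a single ruling, are also as in the paper.

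The gap is exactly at the step you single out as the main obstacle: your justification of the linear independence of $l$ and $l'$ does not work. First, ``$i_*(f)$ pairs nontrivially with $i_*(e)$'' has no meaning: $e\cdot f=1$ is an intersection number on the surface $Y$, and there is no intersection pairing on $H_2$ of the threefold $X$ to which it could be transported. Second, the relations $l\cdot l'=1$ and $l\cdot l=l'\cdot l'=0$ hold in $H_2(X_t,\Z)$, but the map $H_2(X_t,\Q)\to H_2(X,\Q)$ is not a priori injective, so intersection numbers computed inside one fiber say nothing about proportionality of the images in $H_2(X,\Q)$; separating the two ruling classes in the homology of the total space is precisely the point that requires proof, and it is where extraordinariness must be used beyond merely labelling components. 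The paper closes this gap geometrically: since both rulings are defined over $\C(C)$, the same $C_1$-field argument that produced the section of $\pi'$ yields a surface $F\subset X$ meeting every fiber $X_t$ in a single line of the family \U; then $(\ell,F)=0$ for $\ell\in\U$ and $(\ell,F)=1$ for $\ell\in\V$, which (together with the hyperplane class) shows $u$ and $v$ are linearly independent. Some such global input --- a surface or a cohomology class on all of $X$ distinguishing the rulings, or an appeal to the surjectivity of $H^2(X,\Q)\to H^2(X_t,\Q)$ coming from the triviality of the monodromy on $R^2p_*\Q$ --- is indispensable; without it the central equation $b=0$, $\sum c_j=0$ is not established.
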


\begin{proof}
Since the pencil $X$ is extraordinary, all its fibers are smooth (see
Remark~\ref{NTL:sing=>swap}), and if $Q$ is the generic fiber (in the
scheme-theoretic sense) of the morphism~$p$, then both families of
lines on the quadric $Q$ are defined over $\C(C)$. 

Hence, there exist two $2$-dimensional families of lines \U and~\V on~$X$
such that, for each~$t$, the family of lines from~\U (resp.~\V) lying
on $X_t$ is the family of all lines of one of the rulings of the
quadric $X_t$.

Since the base of the family of the lines of each ruling of a smooth
two-dimensional quadric is isomorphic to $\PP^1$, the same argument by
which we showed that the morphism $\pi'\colon Y'\to C$ has a section
shows that there exists a
surface $F\subset X$ such that, for each $t\in C$, $F\cap p^{-1}(t)$
is a line from the family \U. Now it is clear that if a line
$\ell\subset X$ belongs to the family \U, then the intersection index
$(\ell,F)$ is equal to zero, and if $\ell$ belongs to the family \V,
then $(\ell,F)=1$.  Hence, if $u\in H_2(X,\Z)$ (resp.~$v\in
H_2(X,\Z)$) stands for the class
of any line from the family~\U (resp.~\V), then the elements $u$ and
$v$ are linearly independent in $H_2(X,\Z)$.

Using notation from the beginning of this section
we may, without loss of generality, assume that all the lines $\ell_j$
we have chosen on the surface $Y$ belong to the family \U.  Suppose
now that \eqref{ntl:quadrics:relation} holds, where $i\colon Y
\hookrightarrow X$ is the embedding. Take $i_*$ of the left-hand side;
intersecting the resulting homology class with the class of a fiber
of~$X_t$ for some $t\in C$, one obtains $a=0$. Since, in $H_2(X,\Z)$,
$i_*(f)=u+v$, one infers that
\begin{equation*}
  b(u+v)+(c_1+\dots+c_m)u=0;
\end{equation*}
since $u$ and~$v$ are linearly independent in $H_2(X,\Z)$, this
implies that $b=0$ and $c_1+\dots+c_m=0$. Thus, the root lattice of
the vanishing root system is isomorphic to the lattice
\begin{equation*}
  L=\left\{\sum_{j=1}^mc_jl_j\colon c_j\in\Z,\ c_1+\dots+c_m=0\right\},
  \quad\text{where $(l_i,l_j)=\delta_{ij}$}
\end{equation*}
(Lemma~\ref{POQ=>ev>0} implies that $m>1$).  In the proof of
Proposition~\ref{NTL:P^1-bundles} we have seen that this implies that
the root system in question is~$A_{m-1}$.
\end{proof}

Now we can find out which pencils of quadrics have vanishing root
system~$A_1$.

\begin{proposition}\label{PoQ:A_1}
Suppose that $p\colon X\to C$, where $X\subset\PP^N$, is a pencil of
quadrics. Then its vanishing root system is $A_1$ if and only if
$X=\PP^1\times\PP^1\times\PP^1\subset\PP^7$ \textup(the Segre
embedding\textup) and $p$ is the projection onto the first factor.
\end{proposition}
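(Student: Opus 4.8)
The plan is to use the two structural results just proved (Propositions~\ref{NTL:ord-quadr-pencils} and~\ref{NTL:extraord.quadr}) to pin down the numerical invariant $m$, the number of degenerate fibers of $\pi\colon Y\to C$, and then to reconstruct $X$ geometrically. Since the vanishing root system of a pencil of quadrics is $D_m$ (for $m\ge4$), $A_3$ (for $m=3$), or $A_{m-1}$ in the extraordinary case, the only way to obtain $A_1$ is in the extraordinary case with $m=2$; by Remark~\ref{NTL:sing=>swap} an extraordinary pencil has $S=\varnothing$, i.e.\ all fibers of $p$ are smooth quadrics. So first I would record: $X$ has $A_1$ monodromy $\iff$ $X$ is an extraordinary pencil of quadrics with exactly two degenerate fibers of $\pi$ for a general hyperplane section.

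Next I would reconstruct such an $X$. Because the pencil is extraordinary, after passing to the generic fiber $Q$ over $\C(C)$ both rulings are rational over the base, so as in the proof of Proposition~\ref{NTL:extraord.quadr} there are two two-dimensional families $\U,\V$ of lines on $X$, each restricting to a ruling on every fiber, together with sections $F_1,F_2\subset X$ with $F_i\cap X_t$ a line of the appropriate ruling. The standard picture is that an extraordinary smooth pencil of quadrics is a fiber product: $X\cong \PP(\Oo_C\oplus\Oo_C)\times_C \PP(\Oo_C\oplus\Oo_C)$ twisted by line bundles — more precisely, there are rank-$2$ bundles $\F_1,\F_2$ on $C$ with $X=\PP^*(\F_1)\times_C\PP^*(\F_2)$ and $\Oo_X(1)$ the external tensor of the tautological bundles. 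I would then compute $c_1$-type invariants (the degrees $\deg\F_i$ and $\deg C$) in terms of $N$ and the degree of $X$, and read off when a general hyperplane section meets only two degenerate fibers; a Chern-class / Euler-characteristic count should force $C\cong\PP^1$ and $\deg\F_i$ minimal, leaving exactly $X=\PP^1\times\PP^1\times\PP^1\hookrightarrow\PP^7$ Segre-embedded. It is then immediate that projection onto the first factor realizes the pencil structure $p$.

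The main obstacle I expect is the bookkeeping needed to go from ``$m=2$ degenerate fibers of a general hyperplane section'' to ``$C=\PP^1$ and the bundles are trivial.'' One must relate $m$ to intrinsic data: $m$ equals the intersection number of the ``discriminant'' divisor (the locus in $\PP^N{}^*$, restricted to a general line $\ell$, parametrizing hyperplanes that become tangent along a ruling line) — equivalently, $m$ is the number of fibers of $\pi\colon Y\to C$ that degenerate, which by a Riemann–Hurwitz / adjunction computation on the $\PP^1$-bundle $Y'\to C$ can be expressed through $\deg C$ and the self-intersection of the section. Keeping the two families $\U,\V$ straight (the chosen lines $\ell_j$ all lying in $\U$, as in Proposition~\ref{NTL:extraord.quadr}) and translating everything into a clean numerical inequality is the delicate part; once $m=2$ is shown to force minimality of all the discrete invariants, the identification with the Segre cubefold and the verification that $p$ is the first projection are routine.

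Alternatively — and this may be cleaner — once one knows $X$ is a smooth pencil of quadrics with $\dim\ev(Y)=1$, hence $b_2(Y)=b_2(X)+1$, one can invoke the classification already available: $X$ is a scroll-like Fano threefold appearing on Sommese's list with very small invariants, and among threefolds that are simultaneously pencils of quadrics and have $A_1$ monodromy, Proposition~\ref{c_2=2} (via the scroll description $X\cong(Q,\Oo_Q(1)\oplus\Oo_Q(1))$, which \emph{is} $\PP^1\times\PP^1\times\PP^1$) already produces the Segre cubefold; one then checks conversely that $\PP^1\times\PP^1\times\PP^1\subset\PP^7$ with the first projection is indeed an extraordinary pencil of quadrics whose general hyperplane section has two degenerate fibers, and that the other two projections give the \emph{same} monodromy data, so nothing new is gained. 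I would present the forward direction via the root-system dichotomy ($m=2$, extraordinary) and the reverse direction by the explicit description of $\PP^1\times\PP^1\times\PP^1$, its $\Oo(1)$ being the Segre bundle, and a direct count of the two nodal fibers of a general hyperplane section.
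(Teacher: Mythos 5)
Your reduction of the problem is exactly the paper's: by Proposition~\ref{NTL:ord-quadr-pencils} an ordinary pencil can only give $A_3$ or $D_m$ ($m\ge 4$), so an $A_1$ pencil must be extraordinary with $m=2$ degenerate fibers of $\pi\colon Y\to C$, and by Remark~\ref{NTL:sing=>swap} all fibers of $p$ are then smooth. The gap is in the reconstruction step, which is the actual content of the proposition, and neither of your two routes closes it. In route one, the assertion that ``$m=2$ should force $C\cong\PP^1$ and the bundles minimal'' is precisely the hard part and is not carried out; the fibre-product structure $X\cong\PP^*(\F_1)\times_C\PP^*(\F_2)$ itself also needs an argument (it does follow from the two rulings being rational over $\C(C)$, but you must say why, and you must identify $\Oo_X(1)$ with the box tensor up to a twist from $C$ before any Chern-class count means anything). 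A single linear relation among $\deg\F_1$, $\deg\F_2$ and the twist, which is all that ``two degenerate fibers'' gives you directly, does not by itself rule out positive genus; you would need to bring in very ampleness quantitatively, and you have not indicated how. In route two, Proposition~\ref{c_2=2} applies to scrolls over surfaces, so you would first have to prove that an extraordinary pencil of quadrics \emph{is} an embedded scroll over a smooth surface (namely over the parameter surface of one family of rulings, which is a $\PP^1$-bundle over $C$) with $\Oo_X(1)$ the tautological class, and then exclude the other output of Theorem~11.4.5 of~\cite{BelSom}, i.e.\ show that $V_7=\PP^*(\Oo_{\PP^2}(1)\oplus\Oo_{\PP^2}(2))$ is not a pencil of quadrics over a curve. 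Neither point is addressed; as written, ``$X$ appears on Sommese's list with small invariants'' is not an argument, since the categories of the list overlap but a pencil of quadrics is not automatically a scroll.

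For comparison, the paper closes this step by an entirely different mechanism: it sets $\E=p_*\Oo_X(1)$, a rank-$4$ bundle on $C$, writes $X$ as the zero locus of a section of $\Sym^2(\E)\otimes\Ll$ in $\PP^*(\E)$, and extracts \emph{two} linear equations --- $2\deg\E+4\deg\Ll=0$ from the nonvanishing of the discriminant of $p$ (no singular fibers) and $2\deg\E+3\deg\Ll=2$ from the two degenerate fibers of $\pi$ --- giving $\deg\E=4$, $\deg\Ll=-2$. It then maps $\PP^*(\E)$ birationally onto the degree-$4$ variety $Z=\bigcup_t\langle X_t\rangle\subset\PP^N$ and invokes Swinnerton-Dyer's classification~\cite{SD_deg4} to force $C\cong\PP^1$ and $\E\cong\Oo_{\PP^1}(1)^{\oplus4}$, after which a small argument with the matrix of the quadratic form yields $X=\PP^1\times Q$. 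If you want to salvage your plan, the second route is the more promising one, but you must supply the scroll structure on the extraordinary pencil and the exclusion of $V_7$ explicitly.
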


\begin{proof}
Proposition~\ref{NTL:ord-quadr-pencils} shows that such
a pencil cannot be ordinary, so we can and will assume that $p\colon
X\to C$ is an extraordinary pencil.
Put $\E=R^0p_*\Oo_X(1)$. It is clear that $\E$ is a locally free sheaf
of rank~$4$ on~$C$ and that there exists an invertible sheaf \Ll on $C$
and a section $s\in H^0(\Sym^2(\E)\otimes\Ll)$ such that $X$ is the zero
locus of~$s$ in $\PP^*(\E)$.

Since the pencil is extraordinary, Remark~\ref{NTL:sing=>swap} implies
that it has no degenerate fibers. Thus, the discriminant
of $s$ as a family of quadratic forms, which is a section of
$(\det \E)^{\otimes2}\otimes\Ll^{\otimes4}$, has no zeroes
on~$C$. Hence,
\begin{equation}\label{NTL:eq:5,10}
  2\deg \E+4\deg\Ll=0.
\end{equation}
Now suppose that $Y\subset X$ is a transversal hyperplane section. If
$Y$ is the zero locus of a section $\sigma\in H^0(\Oo_X(1))\cong H^0(\E)$,
then, if $\E'$ is the quotient in the exact sequence
\begin{equation*}
  0\to\Oo_C\xrightarrow\sigma \E\to \E'\to 0,
\end{equation*}
one sees that $Y\subset\PP^*(\E')$ is the zero locus of a section
$\sigma'\in H^0(\Sym^2(\E')\otimes\Ll)$ and that the discriminant of
$\sigma'$ is a section of $(\det \E')^{\otimes2}\otimes\Ll^{\otimes3}$;
since the vanishing root system is $A_1$,
Proposition~\ref{NTL:extraord.quadr} implies that there are precisely
$2$ degenerate fibers of the induced pencil $Y\to C$, so the degree of
the invertible sheaf~$(\det \E')^{\otimes2}\otimes\Ll^{\otimes3}$
equals~$2$, whence
\begin{equation}\label{NTL:eq:4,6}
  2\deg \E'+3\deg\Ll=2.
\end{equation}
Taking into account that $\deg \E'=\deg \E$ and putting together
equations~\eqref{NTL:eq:4,6} and~\eqref{NTL:eq:5,10}, one sees that
$\deg \E=4$ and $\deg \Ll=-2$.

Put $Z=\bigcup_{t\in C}\langle X_t\rangle\subset\PP^N$. The natural
homomorphism $H^0(\Oo_{\PP^N}(1))\to H^0(\Oo_{X|C}(1))=H^0(\E)$ induces
a morphism $\ph\colon \PP^*(\E)\to \PP^N$ such that
$\ph^*\Oo_Z(1)=\Oo_{\PP^*(\E)|C}(1)$ and $\ph(\PP^*(\E))=Z$. Since $Z$
contains a one-dimensional family of $\PP^3$'s, it is not a quadric;
since $\deg(\ph|_{\PP^*(\E)})\cdot\deg Z=\deg\det \E=4$, the morphism
$\ph|_{\PP^*(\E)}$ is birational onto $Z$ and $\deg Z=4$. According to the
classification of varieties of degree~$4$ from the
paper~\cite{SD_deg4}, the only such varieties containing a
one-dimensional family of $\PP^3$'s are the Segre variety
$\PP^1\times\PP^3\subset\PP^7$ and its regular projections. Thus,
$C\cong\PP^1$ and $\E\cong\bigoplus_{i=1}^4\Oo(d_i)$, where $0\le
d_1\le d_2\le d_3\le d_4$ and $d_1+d_2+d_3+d_4=4$. Since $C\cong\PP^1$
and $\deg\Ll=-2$, one has $\Ll\cong\Oo_{\PP^1}(-2)$.

The quadratic form defining $X\subset\PP^*(\E)$, which is a section of
$\Sym^2(\E)\otimes\Ll$, can be represented as a
$4\times4$-matrix~$\|a_{ij}\|_{1\le i,j\le4}$, where $a_{ij}\in
H^0(\Oo_{\PP^1}(d_i+d_j-2))$. Let us show that the case $d_1=0$ is
impossible. Indeed, if this is the case, then $a_{11}$ is identically
zero, so in each fiber of the bundle $\PP^*(\E)$ the point with
homogeneous coordinates $(1:0:0:0)$ lies in~$X$ (we use homogeneous
coordinates that agree with the decomposition
$E=\bigoplus\Oo_{\PP^1}(d_i)$). On the other hand, since $d_1=0$, the
mapping $\ph\colon \PP(\E)\to\PP^N$ maps the points with coordinates
$(1:0:0:0)$ in all the fibers of $\PP^*(\E)$ to one and the same point
of $\PP^N$. Thus, there exists a point contained in all the fibers of
the pencil $p\colon X\to\PP^1$, which is absurd.

We have proved that $d_1\ne0$, whence $d_1=d_2=d_3=d_4=1$. Thus, $Z$
is the Segre variety $\PP^1\times\PP^3\subset\PP^7$, and the matrix
$\|a_{ij}\|$ consists of constants. This proves that $X=\PP^1\times
Q\subset\PP^1\times\PP^3$, where $Q\subset\PP^3$ is a smooth
quadric, and the proposition follows.
\end{proof}

\subsection{Veronese pencils}
In this section we account for item~(3) in Sommese's list
(Table~\ref{table:Sommese's.list}). We will see that none of such
varieties has monodromy group of the type~$A_1$.

Throughout this section $X\subset\PP^N$ will be a smooth projective
threefold such that there exists a morphism $p\colon X\to C$ onto a
smooth curve~$C$; for $t\in C$, we put $X_t=p^{-1}(t)$. We assume
that for a general $t\in C$ one has $(X_t,\Oo_{X_t}(1))\cong
(\PP^2,\Oo_{\PP^2}(2))$.

Observe that if, in this setting, $Y\subset X$ is a smooth hyperplane
section, then $\dim\ev(Y)\ne0$; the proof is the same as that of
Lemma~\ref{POQ=>ev>0}.

Let $S=\{\lst ur\}\subset C$ be the set of points such that fibers of
$p$ over them are not irreducible and reduced surfaces (of course, $S$
may be empty); each fiber~$X_{u_j}$ is the union of $n_j$ irreducible
components.  We begin with simple lemmas.

\begin{lemma}\label{v3:union}
Suppose that $\lst Fn\subset\PP^N$ are irreducible projective
surfaces satisfying the following conditions:

\textup{(1)} for any two different $F_i$, $F_j$, 
the set-theoretic intersection $F_i\cap F_j$ is
isomorphic to~$\PP^1$ or empty;

\textup{(2)} the graph of which the vertices are \lst Fn and such that
the vertices $F_i$ and $F_j$ are joined by an edge if and only if
$F_i\cap F_j\ne\varnothing$ \textup(i.e., the incidence graph of $\lst
Fn$\textup), is a tree.

\textup{(3)} $H^3(F_j,\Q)=0$ for each~$j$.

Then $H^3(F_1\cup\dots\cup F_n,\Q)=0$.
\end{lemma}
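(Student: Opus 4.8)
The plan is to induct on the number $n$ of surfaces, using a Mayer--Vietoris argument. For $n=1$ the statement is exactly hypothesis~(3). For the inductive step, since the incidence graph is a tree with $n\ge 2$ vertices, it has a leaf; after renumbering I may assume $F_n$ is a leaf, so that $F_n$ meets exactly one of the other $F_j$, say $F_n\cap F_j\cong\PP^1$ for a unique $j<n$, and $F_n\cap F_i=\varnothing$ for all other $i<n$. Put $G=F_1\cup\dots\cup F_{n-1}$; then $G\cap F_n=F_j\cap F_n$ is isomorphic to $\PP^1$, and $F_1\cup\dots\cup F_n=G\cup F_n$. The subgraph on $\lst F{n-1}$ is a subtree, hence connected and a tree, so by induction $H^3(G,\Q)=0$; also $H^3(F_n,\Q)=0$ by~(3), and $H^2(\PP^1,\Q)=\Q$ while $H^3(\PP^1,\Q)=0$.

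Next I would write down the Mayer--Vietoris sequence for the closed cover $G\cup F_n$ (valid with rational coefficients for this kind of CW/algebraic decomposition; one can replace $G$ and $F_n$ by small open neighbourhoods that deformation retract onto them, their intersection retracting onto $G\cap F_n$):
\begin{equation*}
H^2(G\cap F_n,\Q)\to H^3(G\cup F_n,\Q)\to H^3(G,\Q)\oplus H^3(F_n,\Q).
\end{equation*}
The right-hand term vanishes by the inductive hypothesis and~(3), so it remains to show that the connecting map out of $H^2(G\cap F_n,\Q)=H^2(\PP^1,\Q)=\Q$ is zero, equivalently that the preceding map $H^2(G,\Q)\oplus H^2(F_n,\Q)\to H^2(G\cap F_n,\Q)$ is surjective. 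This is the one point that needs a small argument: the restriction $H^2(F_n,\Q)\to H^2(\PP^1,\Q)$ already hits a generator, because the curve $F_j\cap F_n\cong\PP^1$ is an effective divisor on the projective surface $F_n$ of positive degree with respect to $\Oo(1)$, so its class in $H^2(F_n,\Q)$ pairs nontrivially with $[F_n]$ and its image in $H^2(\PP^1,\Q)$ is a nonzero multiple of the point class. Hence the restriction map is surjective onto $H^2(\PP^1,\Q)=\Q$, the connecting homomorphism vanishes, and the exact sequence forces $H^3(G\cup F_n,\Q)=0$, completing the induction.

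The main obstacle is the last step — controlling the connecting map, i.e.\ showing that $H^2$ of the ambient pieces surjects onto $H^2$ of the intersection $\PP^1$. Everything else is bookkeeping with the tree structure (choosing a leaf so that the new piece meets the rest in a single $\PP^1$) and the standard Mayer--Vietoris exactness for closed algebraic subsets with rational coefficients. The surjectivity onto $H^2(\PP^1,\Q)$ is what genuinely uses that the $F_i$ are projective surfaces embedded in $\PP^N$ and that the intersection curves are honest $\PP^1$'s rather than, say, points or empty; this is where hypotheses~(1) and the embedding get used, and it is worth stating carefully rather than waving at.
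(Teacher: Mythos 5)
Your argument is correct and is essentially the paper's proof: order the pieces so that each new surface meets the union of the previous ones in a single $\PP^1$, run Mayer--Vietoris with $\Q$ coefficients, and reduce everything to the surjectivity of $H^2(G,\Q)\oplus H^2(F_n,\Q)\to H^2(G\cap F_n,\Q)$, which is exactly the one point the paper's (very terse) proof also singles out. One correction to your justification of that surjectivity: the class you should restrict to $F_j\cap F_n\cong\PP^1$ is the hyperplane class $c_1(\Oo_{F_n}(1))$, whose restriction is $\deg(F_j\cap F_n)>0$ times the generator of $H^2(\PP^1,\Q)$ and hence nonzero; restricting the class of the divisor $F_j\cap F_n$ itself, as your wording suggests, computes its self-intersection in $F_n$, which need not be nonzero.
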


\begin{proof}
Condition (2) implies that the surfaces \lst Fn may be ordered so
that, for each~$m$, $F_m\cap F_{m-1}\ne\varnothing$ and $F_m\cap
F_j=\varnothing$ for $j<m$. Now we prove, by induction on~$m$, that
$H^3(F_1\cup\dots\cup F_m,\Q)=0$ for all~$m$.  In the induction step
one uses the Mayer--Vietoris sequence for $(F_1\cup\dots\cup
F_{m-1})\cup F_m$, taking into account that $C=(F_1\cup\dots\cup
F_{m-1})\cap F_m$ is isomorphic to $\PP^1$ and the mapping
\begin{equation*}
H^2(F_1\cup\dots\cup F_{m-1},\Q)\oplus H^2(F_m,\Q)  \to H^2(C,\Q)
\end{equation*}
is an epimorphism.
\end{proof}

\begin{lemma}\label{two-planes}
Suppose that $X\subset\PP^N$ is a smooth three-dimensional projective
variety which is not a linear subspace of $\PP^N$.  

\textup{(a)} If $X$ contains two planes $L_1$ and $L_2$, then $L_1\cap
L_2=\varnothing$.

\textup{(b)} If $X$ contains a plane $L$ and an irreducible
quadric~$Q$, then either $L\cap Q=\varnothing$ or $L\cap Q$ is
\textup(set-theoretically\textup) a line.
\end{lemma}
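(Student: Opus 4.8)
The plan is to argue by contradiction using the elementary geometry of lines on $X$ and the fact that $X$ is not a linear subspace. The key observation, valid in both parts, is that a smooth threefold $X\subset\PP^N$ which is not a linear space cannot contain a $3$-dimensional linear subspace, and, more to the point, cannot contain a plane together with another subvariety meeting it in a way that forces a cone structure. Throughout I would exploit the principle that if two linear subspaces of $\PP^N$ that are contained in $X$ meet along a positive-dimensional linear subspace $\Lambda$, then the join of the two spans a larger linear space $M$, and $X\cap M$ contains this join; one then shows $X\cap M$ is singular along $\Lambda$ unless the join degenerates, contradicting smoothness of $X$.

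For part (a), suppose $L_1$ and $L_2$ are two planes in $X$ with $L_1\cap L_2\ne\varnothing$. Since $X$ is not a linear subspace, $L_1\ne L_2$, so $\dim(L_1\cap L_2)\le 1$. If $\dim(L_1\cap L_2)=1$, then $L_1\cup L_2$ spans a $\PP^3$, call it $M$; I would show that $X\cap M$ is a surface (not all of $M$, since $X$ is not linear and $\dim X=3$) containing the two planes $L_1,L_2$ meeting along a line $\ell$. But then every point of $\ell$ is a singular point of $X\cap M$ (two $2$-planes through a common line in $\PP^3$ cross non-transversally), and since $M$ is a linear subspace this forces $X$ itself to be singular along $\ell$ — contradiction. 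If $\dim(L_1\cap L_2)=0$, say $L_1\cap L_2=\{x\}$, then $T_xX$ contains both $T_xL_1=L_1$ and $T_xL_2=L_2$, hence contains the $\PP^3$ spanned by $L_1$ and $L_2$; but $\dim T_xX=3$, so $T_xX$ equals this $\PP^3$. Now one uses the standard Zak/Fulton--Hansen type argument, or more simply the fact that any line in $X$ through $x$ lies in $T_xX$: the two planes already fill $T_xX$ set-theoretically near $x$ only on a $2$-dimensional locus, so there is a line $m\subset T_xX$ through $x$ with $m\not\subset L_1\cup L_2$; I would then derive a contradiction with the local smooth structure, or appeal directly to the classical fact that a smooth threefold through whose point pass two coplanar-spanning planes must be a linear space. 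I expect the $0$-dimensional case to be the most delicate point and would isolate it as the main obstacle; the cleanest route is probably to show that if $L_1\cap L_2=\{x\}$ the surface $X\cap \langle L_1,L_2\rangle$, which is a surface of some degree in $\PP^3$ containing $L_1\cup L_2$, is singular at $x$ (the two planes cross there), forcing $X$ singular at $x$.

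For part (b), suppose $L\subset X$ is a plane, $Q\subset X$ an irreducible quadric surface, and $L\cap Q\ne\varnothing$. Since $L$ and $Q$ are both contained in the $3$-fold $X$, the scheme $L\cap Q$ is contained in $X$ and has dimension $\le 1$ (it cannot be $2$-dimensional, as that would make $L=Q$, impossible since $Q$ is irreducible non-linear). If $L\cap Q$ is $0$-dimensional, pick $x\in L\cap Q$: then $T_xX\supset T_xL=L$ and $T_xX\supset T_xQ$ (a plane, since $Q$ is smooth at a general such point — if $x$ is the vertex of a quadric cone, handle separately), and again $\dim T_xX=3$ forces a non-transversal crossing of $L$ and $Q$ at $x$ inside the $\PP^3$ they span, so $X$ is singular at $x$, a contradiction. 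Hence $L\cap Q$ is purely $1$-dimensional; it remains to see it is set-theoretically a single line. Write $M=\langle L\cup Q\rangle$; if $Q\subset L$ then $Q$ is a plane quadric, i.e.\ a conic union or a double line, contradicting irreducibility of $Q$ as a surface, so $\dim M\ge 4$, in fact $M$ is a $\PP^3$ or $\PP^4$. Inside $Q$ the curve $L\cap Q$ is cut out by the linear forms vanishing on $L$, so it is a hyperplane section of the quadric surface $Q$, hence either a smooth conic, a pair of distinct lines, or a double line; and it is also contained in the plane $L$, so it is a plane curve of degree $\le 2$ lying on $Q$. The pair-of-lines and smooth-conic cases would give $L\cap Q$ a curve spanning the plane $L$; I would rule these out by the smoothness argument: at a point $x$ of $L\cap Q$ where both $L$ and $Q$ are smooth, $T_xX\supseteq L$ and $T_xX\supseteq T_xQ$, and if $L\cap Q$ is not a single line then $T_xQ\ne L$ for generic $x$, so $L+T_xQ$ is a $\PP^3$ equal to $T_xX$; tracking how this varies along $L\cap Q$ and using that $L\subset T_xX$ for every $x\in L\cap Q$ pins down $Q$ to be tangent to $L$ along $L\cap Q$, which for a smooth quadric surface forces $L\cap Q$ to be a line. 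So the only surviving possibility is that $L\cap Q$ is (set-theoretically) a line, as claimed. The main obstacle here, as in part (a), is the local computation showing non-transversal intersection forces singularity of $X$; once that lemma is in hand both parts follow formally, and I would state and prove it first as a short preliminary observation about smooth subvarieties containing two linear (or one linear and one quadric) pieces whose tangent spaces overspill the tangent space of $X$.
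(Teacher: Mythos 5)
Your overall strategy---examining embedded tangent spaces at points of the intersection---is the same one the paper uses, but the argument as written has a genuine gap at its central step. You repeatedly infer that a non-transversal crossing of two pieces of $X$ inside a linear space $M$ ``forces $X$ itself to be singular'' along the crossing locus. This inference is false: from $T_x(L_1\cup L_2)\supseteq L_1+L_2=M$ one only gets $M\subseteq T_xX$, and when $\dim M=3=\dim T_xX$ this says $T_xX=M$, which is perfectly compatible with smoothness of $X$ at $x$ (a singular \emph{linear section} of a smooth variety is commonplace). What one actually obtains is that $T_xX$ is \emph{constant} along the curve $\ell=L_1\cap L_2$ (and, in part (b), that $T_xX=L+T_xQ=\langle Q\rangle$ for general $x$ in the conic $L\cap Q$), i.e.\ that the Gauss map of $X$ contracts a curve. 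Turning this into a contradiction requires a nontrivial input---Zak's theorem on tangencies, equivalently the finiteness of the Gauss map of a smooth projective variety that is not a linear space---which is exactly how the paper concludes and which your proof never invokes nor replaces. Without it, the cases $\dim(L_1\cap L_2)=1$ in (a) and $L\cap Q$ an irreducible conic or two distinct lines in (b) remain open.

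There are two smaller but real slips. In the case $L_1\cap L_2=\{x\}$ of (a) you assert that $L_1$ and $L_2$ span a $\PP^3$; in fact two planes meeting in a single point span a $\PP^4$, so $T_xX\supseteq\langle L_1,L_2\rangle$ already contradicts $\dim T_xX=3$ and the case closes in one line---the ensuing discussion (including the appeal to an unproved ``classical fact'' that is essentially the statement being proved) is both unnecessary and inconclusive as written. In part (b) the zero-dimensional case is not disposed of by your tangent-space count either: if $L\cap T_xQ$ happens to be a line, then $\dim(L+T_xQ)=3$ and no contradiction arises. The clean way to exclude isolated intersection points (used implicitly by the paper when it asserts that a nonempty $L\cap Q$ not equal to a line must be a conic or a pair of lines) is the intersection-dimension theorem in the smooth ambient threefold: two surfaces inside a smooth threefold meet, wherever they meet, in dimension at least $2+2-3=1$.
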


\begin{proof}
The proofs of both assertions being similar, we will
prove~(b). If, by way of contradiction, $C=L\cap Q$ is not empty and
not a line, then $C$ is an irreducible conic or
the union of two distnct lines. Hence, if $H$ is the linear span of $Q\cup
L$, then $\dim H=3$ and $H= T_x(Q\cup L)$ for any $x\in Q\cap L$. Since $T_x(Q\cup L)\subset
T_xX$ and $\dim T_xX=3$, we conclude that $T_xX=H$ for all $x\in C$,
so the Gauss map of $X$ is not finite. This contradicts Zak's theorem on tangencies~\cite[Chapter~I,
  Corollary~2.8]{zakbook}.
\end{proof}

\begin{lemma}\label{v3:H^3=0}
For each $u\in S$ one has $H^3(X_u,\Q)=0$.  
\end{lemma}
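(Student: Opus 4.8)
The plan is to analyze the structure of a singular fiber $X_u$ of a Veronese pencil and show that its third rational cohomology vanishes, using Lemma~\ref{v3:union} as the main tool. First I would recall that $X$ is smooth, so by a standard argument (since $X_u$ is a Cartier divisor in a smooth threefold, cut out near $X_u$ by a single equation) each component of $X_u$ appears with some multiplicity; but for the topological statement $H^3(X_u,\Q)=0$ only the reduced scheme $(X_u)_{\mathrm{red}}$ matters, so I may work with the reduced fiber and its irreducible components $\lst Fn$. The key geometric input is that $\Oo_{X_u}(1)$ is the specialisation of $\Oo_{\PP^2}(2)$, so $X_u$ has degree $4$ and its linear span is a $\PP^5$; each component $F_j$ is a surface of small degree sitting inside a smooth threefold, and a degree count forces each $F_j$ to be a plane or an irreducible quadric surface. (One must also rule out, say, a single component of degree $3$ or $4$ that is non-normal or a cone — here one invokes that $X$ is smooth and not a linear space, together with Zak's theorem on tangencies as in Lemma~\ref{two-planes}, or simply the classification of the possible flat degenerations of $(\PP^2,\Oo(2))$.)

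Next I would verify the three hypotheses of Lemma~\ref{v3:union} for the collection $\lst Fn$. Condition (3), that $H^3(F_j,\Q)=0$, is immediate: a plane has no odd cohomology, and an irreducible quadric surface — whether smooth ($\cong\PP^1\times\PP^1$), a quadric cone, or the degenerate quadrics of rank $\le 2$ — is a rational surface (or a union of at most two planes meeting in a line), hence has $H^3=0$; in all cases $H^3$ vanishes because these surfaces are unirational of dimension $2$. Condition (1), that any two distinct components meet set-theoretically in a $\PP^1$ or not at all, is exactly what Lemma~\ref{two-planes} gives: two planes in $X$ are disjoint, a plane and an irreducible quadric meet in a line or not at all, and two irreducible quadrics in $X$ — the remaining case — must again meet in a line (the same Gauss-map/tangency argument applies, since if they met in a conic or a plane curve of higher degree their common linear span would be tangent along that curve). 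Condition (2), that the incidence graph is a tree, follows from connectedness of $X_u$ (it is a hyperplane-section-style fibre of a connected morphism, or one uses that $X$ is connected and $p$ has connected fibres) together with the fact that $X_u$ has arithmetic genus making a cycle impossible: a degree-$4$ surface that is a connected chain of planes and quadrics with a loop in its incidence graph would have the wrong Hilbert polynomial, or more directly, a loop would force two of the components to be tangent to $X$ along a whole curve, contradicting Zak again.

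Finally, having checked all hypotheses, I would apply Lemma~\ref{v3:union} to conclude $H^3((X_u)_{\mathrm{red}},\Q)=0$, and since $H^3(X_u,\Q)\cong H^3((X_u)_{\mathrm{red}},\Q)$ (the natural map $(X_u)_{\mathrm{red}}\hookrightarrow X_u$ is a homeomorphism on underlying topological spaces, so it induces an isomorphism on singular cohomology with $\Q$-coefficients), the lemma follows. The main obstacle I anticipate is not the topological bookkeeping but the geometric classification step: pinning down precisely which reducible and non-reduced degenerations of $(\PP^2,\Oo_{\PP^2}(2))$ can occur as fibres of $p$ inside a smooth threefold, and in particular ruling out an irreducible component that is a non-normal quartic or a cone over a rational quartic curve, and ruling out cycles in the incidence graph. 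I expect both are handled cleanly by Lemma~\ref{two-planes} (i.e., by Zak's theorem on tangencies, which forbids a smooth threefold from being tangent to a fixed hyperplane along a curve) together with a degree-$4$ count in the spanning $\PP^5$.
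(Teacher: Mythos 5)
Your overall strategy (reduce to the components of the fiber and apply Lemma~\ref{v3:union}) is the paper's strategy, but the way you propose to verify the hypotheses of that lemma has real gaps. First, your ``degree count'' does not force every component to be a plane or an irreducible quadric: for a reduced reducible fiber of degree $4$ the partition $1+3$ is a priori possible, so a cubic component (a scroll or a cone over a twisted cubic) must be dealt with, and your parenthetical appeal to Zak's theorem to exclude it is not an argument. The paper does not exclude such components; it shows instead that every component has general hyperplane section isomorphic to $\PP^1$, which already gives $H^3(F_j,\Q)=0$ whatever the degree. Second, condition (1) for two quadric components is not covered by Lemma~\ref{two-planes}, and the tangency argument does not transfer: if $Q_1,Q_2\subset X$ meet along a conic, the spans $\langle T_xQ_1,T_xQ_2\rangle$ at points $x$ of the conic are $3$-planes that may vary with $x$, so no fixed linear space is tangent to $X$ along a curve and Zak's theorem gives no contradiction. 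Third, your justification of the tree condition (2) is only a gesture (``wrong Hilbert polynomial'' / ``contradicting Zak again''); neither is carried out, and the second is again not a valid mechanism.

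The single idea you are missing is the one the paper uses to get all three hypotheses at once in the reduced case: by flatness the Hilbert polynomial of $X_u$ equals that of $v_2(\PP^2)$, so a general hyperplane section $H\cap X_u$ is a connected \emph{reduced} curve of degree $4$ and arithmetic genus $0$, hence a tree of smooth rational curves meeting pairwise in at most one point. This immediately yields that distinct components of $X_u$ meet in a line or not at all, that the incidence graph is a tree, and that each component is a surface whose general hyperplane sections are $\PP^1$'s (so $H^3=0$). Finally, you dismiss the non-reduced case too quickly: when $p^*u$ has a multiple component the reduced fiber has degree strictly less than $4$, the Hilbert-polynomial argument no longer applies to $(X_u)_{\mathrm{red}}$, and this is precisely where the paper genuinely invokes Lemma~\ref{two-planes} to list the possibilities (a plane, an irreducible quadric, or their union along a line). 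As written, your proof does not close either of these two cases.
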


\begin{proof}
Since $C$ is a smooth curve, both $X_u$ and its
hyperplane section are connected. Since $\dim X=3$, the intersection of
any two components of $X_u$ is purely one-dimensional. Now we
consider two cases.

(a) 
The divisor $p^*u$ has no
multiple components.

If $H$ is a general hyperplane (which does not contain $X_u$), then
the scheme $H\cap X_u$ is reduced and connected, whence
$h^0(\Oo_{X_u})=1$.  Since the Hilbert polynomial of $X_u$ is the same
as that of $v_2(\PP^2)$, the arithmetic genus 
of $H\cap X_u$ equals zero.

It follows from the above that all the components of $H\cap X_u$ are
isomorphic to $\PP^1$ and the incidence graph of the components of
$H\cap X_u$ is a tree; in particular, any two components of $H\cap
X_u$ are either disjoint or intersect at precisely one point. Hence,
if $F_1$ and $F_2$ are two components of $X_u$, then either $F_1\cap
F_2=\varnothing$ or $F_1\cap F_2$ is a line. Besides, the incidence
graph of the components of $X_u$ is a tree and any component of $X_u$
is a surface of degree~$\le 3$ whose general hyperplane sections are
isomorphic to $\PP^1$. It follows from the latter assertion that
$H^3(F,\Q)=0$ whenever $F$ is a component of $X_u$. Thus, $X_u$
satisfies the hypothesis of Lemma~\ref{v3:union}, whence
$H^3(p^{-1}(u),\Q)=0$.

(b) The divisor $p^*u$ has a multiple component. Since $\deg p^*u=4$
and the set~$X_u=p^{-1}(u)$ is connected, Lemma~\ref{two-planes} implies that
$X_u$ is either a plane, or an irreducible quadric, or the union
of an irreducible quadric and a plane intersecting in a line. In each of these
cases the vanishing $H^3(X_u,\Q)=0$ is obvious.
\end{proof}

\begin{lemma}\label{v3:b_2(X)}
\begin{equation*}  
    b_2(X)=2+\sum_{j=1}^r(n_j-1)  
\end{equation*}
\end{lemma}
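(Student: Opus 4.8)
The plan is to compute the second Betti number of $X$ by relating it to the second Betti number of a generic fiber together with a contribution coming from the reducible fibers, via the Leray spectral sequence (or, equivalently, a Mayer--Vietoris / Deligne weight argument). The base $C$ is a smooth curve, a generic fiber is $\PP^2$ with $b_2=1$, and the group $\pi_1(C\setminus S)$ acts trivially on $H^2$ of a generic fiber (a plane has no interesting automorphisms of cohomology), so the "naive" contribution to $b_2(X)$ from $R^0p_*\Q$ and $R^2p_*\Q$ is $b_0(C)+b_0(C)=2$; each reducible fiber $X_{u_j}$ then contributes an extra $n_j-1$ to $b_2$ because it has $n_j$ components instead of the single $\PP^2$ that a general fiber has. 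Summing gives the claimed formula $b_2(X)=2+\sum_{j=1}^r(n_j-1)$.

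First I would set up the Leray spectral sequence for $p\colon X\to C$. Since $X$ is smooth projective of dimension $3$ and all fibers are connected (as $C$ is a smooth curve), $R^0p_*\Q=\Q_C$ contributes $H^0(C,\Q)$ in degree $0$ and $H^1(C,\Q)$ in degree $1$. The sheaf $R^2p_*\Q$ is generically of rank one with trivial monodromy, hence the constant sheaf $\Q_C$ away from $S$; at the points $u_j\in S$ its stalk has rank $n_j$, the number of irreducible components of $X_{u_j}$ (using Lemma~\ref{v3:H^3=0}, which gives $H^3(X_{u_j},\Q)=0$ and lets one control the relevant specialization maps). Thus $R^2p_*\Q$ is an extension of $\Q_C$ by a skyscraper sheaf $\bigoplus_j \Q^{n_j-1}_{u_j}$, contributing $1+\sum_j(n_j-1)$ to $H^0(C,R^2p_*\Q)$ and nothing in higher degree (skyscrapers have no higher cohomology, and $H^1(C,\Q)$ contributes to $H^3(X)$, not $H^2(X)$). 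The sheaf $R^1p_*\Q$ vanishes because the generic fiber $\PP^2$ and also the special fibers (by Lemmas~\ref{v3:H^3=0} and the structure results on $X_u$) have $H^1=0$; one has to check this also at the $u_j$, but each component is a rational surface and the incidence graph is a tree, so a Mayer--Vietoris argument like in Lemma~\ref{v3:union} gives $H^1(X_u,\Q)=0$. Assembling the $E_2$-page, the only contributions to $H^2(X,\Q)$ are $H^2(C,R^0p_*\Q)=\Q$ (from $b_2(C)=1$ since $C$ is a curve — wait, $\dim C=1$ so $H^2(C)=0$; the term $H^0(C,R^2p_*\Q)$ and $H^1(C,R^1p_*\Q)=0$ remain), so in fact $b_2(X)=h^0(C,R^2p_*\Q)+h^2(C,\Q)=\bigl(1+\sum_j(n_j-1)\bigr)+0$; one additional unit comes from the hyperplane class which is not captured by $R^0p_*\Q$ in degree $2$ but rather — here I would be careful and instead argue directly that the class of a fiber $[X_t]$ and the restriction of the hyperplane class $h$ are independent in $H^2(X)$, accounting for the "$2$".

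The cleanest route to the $2$, which I would actually write up, is: $H^2(X,\Q)$ receives $h^*H^2(\text{point})=0$; it has a two-step filtration from Leray with graded pieces $H^0(C,R^2p_*\Q)$ and $H^2(C,R^0p_*\Q)=0$ and $H^1(C,R^1p_*\Q)=0$, so $b_2(X)=h^0(C,R^2p_*\Q)$, and I must show this equals $2+\sum_j(n_j-1)$, i.e., that the rank of $R^2p_*\Q$ generically is $\mathbf{2}$, not $1$ — but $H^2(\PP^2,\Q)$ has rank $1$! So the missing generator is the class of a general fiber $F=p^{-1}(t)$ itself, i.e., $p^*$ of the point class on $C$, which lands in $H^2(X)$ via the edge map $H^2(C,\Q)=0$... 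The resolution is that one should use the Leray sequence together with the class $p^*[\mathrm{pt}]$: more honestly, $b_2(X)=1+h^0(C,R^2p_*\Q)$ where the leading $1$ is $\dim p^*H^0(C,\omega_C)^\vee$-type contribution — I would instead cite the standard fact (e.g. via the projection formula and $p_*p^*=\mathrm{id}$) that for a morphism with connected fibers from a smooth projective $X$ to a smooth curve $C$ one has $b_2(X)=b_2(C)+b_0(C)+\sum_{t}(\#\{\text{components of }X_t\}-1)+\text{(rank of generic }R^2p_*\text{ minus }1)$ adjustments; for our $p$, $b_2(C)=0$ (a curve), the generic fiber has $b_2=1$ contributing one copy of $\Q$ in $H^0(C,R^2p_*\Q)$, plus the fiber class from the base giving another $1$, plus $\sum_j(n_j-1)$ from the extra components — total $1+1+\sum_j(n_j-1)=2+\sum_j(n_j-1)$.

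The main obstacle is handling $R^2p_*\Q$ correctly at the bad points $u_j$: one must verify that its stalk there has rank exactly $n_j$ and that the sheaf is the expected extension (no surprise monodromy, no extra invariant classes beyond the component classes), which is where Lemma~\ref{v3:H^3=0}, Lemma~\ref{two-planes}, and the tree structure of the incidence graph enter. Concretely I would argue: by proper base change $(R^2p_*\Q)_{u_j}=H^2(X_{u_j},\Q)$; for $X_{u_j}$ reduced with components $F_1,\dots,F_{n_j}$ meeting pairwise in $\PP^1$'s or emptily along a tree, the Mayer--Vietoris spectral sequence gives $H^2(X_{u_j},\Q)=\bigoplus_k H^2(F_k,\Q)\big/(\text{relations from the }\PP^1\text{ intersections})$, and since each $H^2(\PP^1,\Q)=\Q$ and the incidence graph is a tree, the generic-to-special specialization map has image of rank $1$ and cokernel of rank $n_j-1$; for the non-reduced $X_{u_j}$ (case (b) of Lemma~\ref{v3:H^3=0}, so $X_u$ a plane, a quadric, or plane$\cup$quadric), one has $n_j=1$ or $2$ and checks the formula by hand. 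Once the local structure of $R^2p_*\Q$ is pinned down, the Betti number count is immediate.
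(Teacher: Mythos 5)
Your overall strategy (Leray for $p$ applied directly to $H^2$) is genuinely different from the paper's, which computes $b_4(X)=b_2(X)$ via the exact sequence relating $H^\bullet(X)$, $H^\bullet_c(p^{-1}(C\setminus S))$, and $H^\bullet(p^{-1}(S))$; there the special fibres enter only through their \emph{top} cohomology $H^4(X_{u_j},\Q)\cong\Q^{n_j}$, which counts irreducible components with no further input. Your route could in principle work, but as written it has two problems. The first is a self-inflicted confusion rather than a fatal error: you assert ``$\dim C=1$ so $H^2(C)=0$''. This is false --- $C$ is a smooth projective curve, hence a compact oriented real surface, so $H^2(C,\Q)\cong\Q$. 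The term $E_2^{2,0}=H^2(C,R^0p_*\Q)\cong\Q$ is exactly the fibre class $p^*[\mathrm{pt}]$ you spend two paragraphs hunting for, and with it the count $b_2(X)=h^2(C,\Q)+h^1(C,R^1p_*\Q)+h^0(C,R^2p_*\Q)$ is clean; moreover the relevant differentials vanish once $R^1p_*\Q=0$ and $H^{\ge3}(C,-)=0$, so you do not even need full degeneration of the spectral sequence.

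The genuine gap is in the computation of $h^0(C,R^2p_*\Q)$, which rests on the unproved claim that the stalk $(R^2p_*\Q)_{u_j}=H^2(X_{u_j},\Q)$ has rank exactly $n_j$. Unlike $H^4$, the group $H^2$ of a reducible surface is \emph{not} simply the number of components: by your own Mayer--Vietoris argument it is the kernel of $\bigoplus_k H^2(F_k,\Q)\to\bigoplus H^2(F_k\cap F_{k'},\Q)$, and the components $F_k$ that can occur here (irreducible quadrics, cubic scrolls) may have $b_2(F_k)=2$, not $1$. For a hypothetical fibre consisting of two smooth quadric surfaces glued along a line, for example, that kernel has rank $3\ne n_j=2$. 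So you must either classify the special fibres finely enough to verify $\dim H^2(X_{u_j},\Q)=n_j$ in every case (i.e., check that each ``extra'' class on a component is killed by restriction to an intersection line, or that such components cannot occur), or switch to an invariant that counts components formally. The paper's use of Poincar\'e duality on the smooth total space together with $H^4(D,\Q)\cong\Q^{\sum n_j}$ is precisely the device that makes this issue evaporate, and I would recommend adopting it.
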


\begin{proof}
Since $b_2(X)=b_4(X)$ by Poincar\'e duality, it suffices to
find~$b_4(X)$.
  
Put $X_0=p^{-1}(C\setminus S)$, $D=p^{-1}(S)$; taking into account
Lemma~\ref{v3:H^3=0}, one has the exact
sequence
\begin{multline}\label{v3:exseq}
0=H^3(D,\Q)\to H^4_c(X_0,\Q)\to H^4(X,\Q)\to H^4(D,\Q)\\ \to H^5_c(X_0,\Q)  
\to H^5(X,\Q)\to H^5(D,\Q)=0.
\end{multline}
Observe that if $D$ is a projective variety having $n$ irreducible
components \lst Dn of equal dimension~$d$, then
$H^{2d}(D,\Q)\cong\Q^d$; hence, $b_4(D)=n_1+\dots+n_r$.

Moreover, if we denote the genus of the curve~$C$ by~$g$, then
$b_5(X)=2g$. Indeed, by Poincar\'e duality it suffices to check that
$b_1(X)=2g$; now observe that, since fibers of~$p$ are swept by
rational curves, any morphism from $X$ to an Abelian variety factors
through~$p$, so the Albanese variety of~$X$ is isomorphic to that
of~$C$.

Let us find $H^4_c(X_0,\Q)$ and $H^5_c(X_0,\Q)$. To that end, let
$p_0\colon X_0\to C$ be the restriction of $p$ to $X_0$. Pick a point
$u\in C\setminus S$ and put $F=X_u$. Since $F$ is the Veronese surface
or its isomorphic projection, the ring $H^*(F,\Q)$ is generated by the
class of hyperplane section, whence $\pi_1(C\setminus S,u)$ acts
trivially on $H^*(F,\Q)$. Hence, the Leray spectral sequence of $p_0$
has the form
\begin{equation*}
E_2^{pq}=H^p_c(C\setminus S,H^q(F,\Q)) \Rightarrow H_c^{p+q}(X_0,\Q). 
\end{equation*}
Since $F\cong\PP^2$ and this spectral sequence degenerates at~$E_2$ by
Griffiths' theorem~\cite[Proposition 3.1]{Griffiths}, one finds that
$H^4_c(X_0,\Q)\cong\Q$ and $H^5_c(X_0,\Q)\cong \Q^{2g+r-1}$.

Plugging these cohomology groups of $X_0$ and~$X$ in the exact
sequence~\eqref{v3:exseq}, one obtains the result.
\end{proof}

\begin{proposition}\label{ntl:ver.pencils}
Suppose that $X$ is a Veronese pencil, $Y\subset X$ is a smooth
hyperplane section, and $m=\dim\ev(Y)$. Then the vanishing root system
of~$X$ is $A_3$ for $m=3$, it is $D_m$ for $m\ge4$, $m\ne 8$, it is
$D_8$ or $E_8$ for $m=8$, and the case $m\le2$ is impossible.
\end{proposition}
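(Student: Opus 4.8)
The plan is to mimic the structure of the proofs for pencils of quadrics (Propositions~\ref{NTL:ord-quadr-pencils} and~\ref{NTL:extraord.quadr}): describe explicitly the root lattice $L$ of the vanishing root system as a sublattice of $H_2(Y,\Q)$, compute the index $[L^\vee:L]$, and then invoke the classification of simply laced irreducible root systems by rank and discriminant. Let $p\colon X\to C$ be the Veronese pencil, $Y\subset X$ a general smooth hyperplane section, and $\pi\colon Y\to C$ the induced morphism. A general fiber of $\pi$ is a smooth hyperplane section of $(\PP^2,\Oo_{\PP^2}(2))$, i.e.\ a smooth conic, hence a $\PP^1$; so $Y\to C$ is a conic bundle, degenerate over finitely many points, with degenerate fibers being pairs of distinct intersecting lines (this uses that $Y$ is smooth, so the hyperplane meets each fiber transversally away from finitely many points, and the count from Lemma~\ref{v3:b_2(X)} controls the singular fibers). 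As in the quadric-pencil case, I would pick one line $\ell_j$ in each degenerate fiber, blow them down to get a $\PP^1$-bundle $Y'\to C$ (which has a section by the $C_1$ property of $\C(C)$), and obtain a basis $\langle e,f,\ell_1,\dots,\ell_k\rangle$ of $H_2(Y,\Z)$ with $f$ the fiber class, $e\cdot f=1$, $e\cdot\ell_j=0$. The difference from the quadric case is the self-intersection pairing on the fiber: there $f^2=0$, but here a general fiber of $\pi$ is a conic on a Veronese surface, which is a plane conic having degree $2$, so in the $(\cdot,\cdot)$-pairing (negated intersection form on $Y$) one has $(f,f)=0$ still since $f^2_Y=0$ (a fiber moves), and $(f,\ell_j)=0$, $(\ell_i,\ell_j)=\delta_{ij}$ — so in fact the lattice computation is formally identical to the quadric case, \emph{except} that the pushforward relation in $H_2(X,\Z)$ is different: $i_*(f)=2\lambda$ in terms of the class $\lambda$ of a line of the ruling (since the fiber of $\pi$ has degree $2$ in the fiber of $p$, which is $v_2(\PP^2)$), mirroring the ordinary-pencil situation. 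So the root lattice is $L=\{bf+\sum c_j\ell_j:\ c_1+\dots+c_k+2b=0\}$, which after the substitution $\tilde\ell_j=\ell_j-\tfrac12 f$ becomes the $D_k$-type lattice $\{\sum c_j\tilde\ell_j:\sum c_j\equiv 0\ (2)\}$ with orthonormal $\tilde\ell_j$, so $[L^\vee:L]=4$ and $\rank L=k=m$.

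From here the classification of lattices goes exactly as before: a simply laced irreducible root system of rank $m$ with discriminant group of order $4$ is $A_3$ (for $m=3$) or $D_m$ (for $m\ge 4$), with the one genuine ambiguity being $m=8$, where $E_8$ also has discriminant $1$ — wait, $E_8$ is unimodular, so that is \emph{not} the ambiguity; rather $D_8$ and $E_8$ have the same rank but different discriminants ($4$ versus $1$), so discriminant alone would distinguish them. The subtlety the statement is flagging at $m=8$ must come from the fact that the \emph{root lattice spanned by the vanishing cycles $\tau_i$} need not be the full $D_8$ lattice $L$ — recall the Remark after Proposition~\ref{Deligne} that $\tau_1,\dots,\tau_r$ need not be a base. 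If the $\tau_i$ generate a sublattice $L_0\subsetneq L$, then by Remark~\ref{newnote} one has $[L_0^\vee:L_0]=[L^\vee:L]\cdot[L:L_0]^2$, and a rank-$8$ simply laced root lattice can have discriminant $1$ (namely $E_8$) sitting inside $L=D_8$ with index $2$ since $[D_8^\vee:D_8]\cdot 2^2=4\cdot 4=16\ne 1$ — so that does not work either; the genuine point is that $D_8$ and $E_8$ are the two rank-$8$ simply laced root systems whose root lattices can both be realized with the combinatorial data available, and without a finer argument (e.g.\ exhibiting the intersection pattern of the $\tau_i$ or producing a $D_8$-sublattice via the explicit $\ell_j$) one cannot a priori exclude $E_8$. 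I would therefore state the $m=8$ case as genuinely undetermined by this method (which is what the proposition does), and for all other $m$ conclude $D_m$ or $A_3$ directly.

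The remaining ingredient is the impossibility of $m\le 2$ and the identification of $m$ with the number of degenerate fibers. That $m=\dim\ev(Y)>0$ follows from the Lemma~\ref{POQ=>ev>0}-style argument already noted in the text for Veronese pencils (a plane in $X$ would have to lie in a fiber of $p$, but $\PP^2$ admits no nonconstant map to a curve — here one uses instead that $X^*$ not being a hypersurface would force $X$ to be swept by planes by \cite[Theorem 5.1]{Ein2}, and such planes lie in fibers, contradicting that the general fiber is a Veronese surface, which contains no plane). That $m=1,2$ is impossible is just the statement that no simply laced irreducible root system of rank $1$ or $2$ has discriminant $4$ ($A_1$ has discriminant $2$, $A_2$ has discriminant $3$, and those are the only rank-$\le 2$ simply laced systems). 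Finally, to see $m$ equals the number $k$ of degenerate fibers of $\pi$ I use that the relation~\eqref{ntl:quadrics:relation}-analogue forces $a=0$ and reduces the kernel to a single linear condition on $(b,c_1,\dots,c_k)$, so $\dim\ev(Y)=k$; combined with Lemma~\ref{v3:b_2(X)} (which gives $b_2(Y)=b_2(X)+\dim\ev(Y)$ via the usual exact sequence for $\ev$) this pins down $m$. The main obstacle is the $m=8$ dichotomy: deciding between $D_8$ and $E_8$ would require either a Hodge-theoretic input (the actual arithmetic of the vanishing cycles on a specific Veronese pencil) or a more careful geometric description of which linear combinations of the $\ell_j$ and $f$ actually arise as $\tau_i$'s, and I do not see how to resolve it at this level of generality — hence it is left open in the statement.
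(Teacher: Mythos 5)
Your computation of the explicit lattice $L=\{bf+\sum c_j\ell_j:\ \sum c_j+2b=0\}$ with $[L^\vee:L]=4$ matches the paper, but the step where you declare this to \emph{be} the root lattice is exactly where the argument breaks, and your own confusion about the $m=8$ case is the symptom: if the root lattice really were $L$ with discriminant $4$, then $m=8$ would force $D_8$ and the proposition's ``$D_8$ or $E_8$'' would be wrong. What the paper actually proves is only the containment $L\subseteq L_1:=\Ker(i_*\colon H_2(Y,\Z)\to H_2(X,\Z))$ together with $\rank L=\rank L_1=m$, so that $L$ has \emph{finite index} in the true root lattice $L_1$; Remark~\ref{newnote} then gives $[L_1^\vee:L_1]=4/[L_1:L]^2\in\{4,1\}$, and discriminant $1$ in rank $8$ is $E_8$. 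You looked for the ambiguity in the wrong direction (a sublattice $L_0\subsetneq L$ generated by the $\tau_i$, which makes the discriminant larger, as you correctly computed), whereas the genuine issue is an \emph{overlattice} $L_1\supseteq L$. The reason equality $L=L_1$ cannot be verified here, unlike in the quadric-pencil case, is that one does not control $H_2(X,\Z)$ and $H_2(Y,\Z)$ well enough: the relation $i_*(f)=2\,i_*(l_j)$ (coming from $[E_j]=[F_j]$ in $H_2(p^{-1}(t_j),\Z)$ and the algebraic equivalence of a smooth fibre with $E_j+F_j$) yields only one inclusion.

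A second, related omission: you treat $Y\to C$ as if it were a conic bundle degenerating only into ``pairs of lines'' over a full basis $\langle e,f,\ell_1,\dots,\ell_k\rangle$ of $H_2(Y,\Z)$. The fibres of $p$ itself may be reducible or non-reduced over a finite set $S\subset C$ (and the degenerate fibres of $\pi$ over $C\setminus S$ are pairs of \emph{conics}, i.e.\ images under $v_2$ of two lines, not lines). These bad fibres contribute extra classes to both $H_2(X,\Z)$ and $H_2(Y,\Z)$, which is why the paper needs Lemmas~\ref{v3:union}--\ref{v3:b_2(X)} to compute $b_2(X)=2+\sum(n_j-1)$ and $b_2(Y)=2+\sum(n_j-1)+m$ and thereby conclude $\rank\Ker(i_*)=m$. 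Your assertion that the kernel ``reduces to a single linear condition on $(b,c_1,\dots,c_k)$'' is justified only when $S=\varnothing$, and even then it gives the containment, not the equality, of lattices. So the proposal needs two repairs: the $b_2$ bookkeeping in the presence of bad fibres of $p$, and the replacement of ``root lattice $=L$'' by ``$L\subseteq L_1$ of finite index, hence $[L_1^\vee:L_1]\in\{1,4\}$,'' which is precisely what produces, rather than obscures, the $D_8/E_8$ dichotomy.
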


\begin{proof}
Let $i\colon Y\hookrightarrow X$ be the embedding of a general
hyperplane section~$Y$, and put $\pi=p\circ i$.

If $t\in C\setminus S$, then $\pi^{-1}(t)$ is either a smooth rational
curve of degree~$4$ (i.\,e., a transversal hyperplane section of a
smooth rational surface of degree~$4$) or the union of two different
irreducible conics that intersect transversally at one point. If
$T=\{\lst tm\}\subset C$ is the set of points with the latter
property, put $\pi^{-1}(t_j)=E_j\cup F_j$, where $E_j$ and $F_j$ are
the above-mentioned conics.

If $t=u_j\in S$, then $\pi^{-1}(t)$ is a curve with $n_j$ components;
each of these components is a smooth rational curve and their
intersection graph is a tree.

 It is clear that the self-intersection index of each $E_j$ and~$F_j$
 in~$Y$ equals~$-1$. Blowing down all the $E_j$'s one obtains a
 morphism $\sigma\colon Y\to Y'$, where $Y'$ is a smooth surface
 possessing a morphism $\pi'\colon Y'\to C$ such that ${\pi'}^{-1}(t)$
 is a smooth rational curve for $t\notin S$ and $\pi^{-1}(t)$ is a
 connected reducible curve of arithmetic genus zero with $n_j$
 rational components if $t=u_j\in S$. Arguing  as in the proof of
Proposition~\ref{v3:b_2(X)}, one sees that
\begin{equation}\label{eq:v2:X,Y}
    b_2(Y)=2+\sum_{j=1}^r(n_j-1)+m,
\end{equation}
where $m$ is the cardinality of $T\subset C$. 
Now it follows from~\eqref{eq:v2:X,Y}
and Lemma~\ref{v3:b_2(X)} that
\begin{equation}\label{v2:eq:rank(Ker)}
\rank\Ker (H_2(Y,\Z)\xrightarrow{i_*}H_2(X,\Z))=m.  
\end{equation}

Let $f\in H_2(Y,\Z)$ be the
class of $\sigma^*F'$, where $F'\subset Y'$ is a smooth fiber of
$\pi'$, let $l_j\in H_2(Y,\Z)$ be the class of $E_j$, $1\le j\le m$.
Put $L_1=\Ker (H_2(Y,\Z)\to H_2(X,\Z))$ and
\begin{equation*}
L=\{bf+\sum_{j=1}^m c_jl_j\colon c_1+\ldots+c_m=-2b\}.
\end{equation*}
Observe that, if for $t_j\in T$ one has $\pi^{-1}(t_j)=E_j\cup F_j$, then
$[E_j]=[F_j]$ in $H_2(p^{-1}(t_j),\Z)$.
Besides, any
smooth fiber of $\pi$ is algebraically equivalent to $E_j+F_j$. Hence,
$i_*(f)=2i_*(l_j)$ for each $j$, so $L\subset L_1$ and $\rank L=m$;
\eqref{v2:eq:rank(Ker)} implies that the lattice $L$ has finite index
in~$L_1$.

The same observation as in the proof of
Proposition~\ref{NTL:ord-quadr-pencils} shows that
$[L^\vee:L]=4$. Since $L\subset L_1$, it follows from Remark~\ref{newnote}
that $[L_1^\vee:L_1]$ equals~$4$ or~$1$. If, for a simply laced
irreducible root system~$R$ of rank~$m$, the index of the root lattice
in the weight lattice is~$4$, then either $m=3$ and $R=A_3$ or $m\ge 4$
and $R=D_m$; if this index is~$1$, then $m=8$ and $R=E_8$. This
completes the proof.
\end{proof}

\subsection{Del Pezzo threefolds}\label{ntl:sec:Fano2}
In this section we account for item~(4) in
Table~\ref{table:Sommese's.list}. The complete list of such varieties
is well known (see~\cite{IskIzv,Iskovskikh,Fujita,Fujita2}), and the
Betti numbers of these varieties are well known, too, which already
allows one to extract from the list those having $1$-dimensional space
of vanishing cycles. For the sake of completeness we compute the
vanishing root systems as well.

\begin{proposition}\label{ntl:DelPezzo3folds}
If $X\subset\PP^N$ is an embedded smooth Del Pezzo threefold, then it
vanishing root system is as indicated 
in Table~\ref{table:Fano2}. 
\end{proposition}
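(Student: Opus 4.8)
The plan is to go through the classification of smooth Del Pezzo threefolds $X\subset\PP^N$ (those with $\omega_X\cong\Oo_X(-2)$), which is well known and indexed by the degree $d=\Oo_X(1)^3$, and for each entry compute the rank of the space of vanishing cycles $\ev(Y)$ together with its vanishing root system. Since $\dim X=3$ is odd and $X^*$ is a hypersurface (these varieties are not $\PP^2$-scrolls over curves), the monodromy group is finite by Proposition~\ref{sommeses_list} (Del Pezzo threefolds are item~(4) of Sommese's list), hence by Proposition~\ref{Deligne} the vanishing root system is an irreducible simply laced root system $A$, $D$, or $E$ whose rank equals $b_2(Y)-b_2(X)$. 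So the computation splits into two parts for each variety on the list: first determine $b_2(Y)-b_2(X)$, and second pin down which root system of that rank actually occurs, using the lattice-index invariant $[L^\vee:L]$ as in the proofs of Propositions~\ref{NTL:P^1-bundles}, \ref{NTL:ord-quadr-pencils}, \ref{PoQ:A_1}.

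First I would recall the list: for degree $d\ge3$ the generic Del Pezzo threefold of degree $d$ is a codimension-$c$ linear section of an appropriate homogeneous or almost-homogeneous variety ($d=5$: a linear section of $\Gr(2,5)$; $d=4$: intersection of two quadrics in $\PP^5$; $d=3$: cubic threefold in $\PP^4$; $d=2$: double cover of $\PP^3$ branched in a quartic — but this is not embedded in the relevant sense, or embedded by $\Oo(1)$ with the right properties), together with the lower-degree "non-generic" ones: $\PP^3$ itself embedded by $\Oo(2)$ (degree $8$, with $\omega=\Oo(-4)=\Oo(-2)^{\otimes 2}$... one must check $\Oo(2)$ is the right bundle), the quadric $Q\subset\PP^4$ (degree... no, wait), $\PP^1\times\PP^1\times\PP^1$, $\PP^2\times\PP^2$, $\PP(T_{\PP^2})$, the flag variety, and the blowup of $\PP^3$ in a point. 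For each, $b_2(Y)$ is computed either from the Lefschetz hyperplane theorem (when $Y$ has a Lefschetz-type relation to a known surface), from the structure of $Y$ as a Del Pezzo surface of the appropriate degree, or by the same Leray/Mayer--Vietoris bookkeeping used earlier in the paper; typically $Y$ is a Del Pezzo surface and $b_2(Y) = 11-d$ while $b_2(X)$ is small (often $1$, $2$, or $3$ depending on $\Pic X$).

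Then, to identify the root system among those of the given rank, I would exhibit an explicit $\Z$-basis of the root lattice $L=\ev(Y)\cap H_2(Y,\Z)$ (or of a full-index sublattice $L$, invoking Remark~\ref{newnote} to control $[L^\vee:L]$ up to a square factor), compute $\det\|(\tau_i,\tau_j)\|$ with the form~\eqref{eq:()} (negated intersection form), and read off $[L^\vee:L]$; combined with "simply laced, irreducible, rank $=m$" this forces the type ($A_{m}$ has index $m+1$, $D_m$ has index $4$, $E_6$ index $3$, $E_7$ index $2$, $E_8$ index $1$). For the interesting low-rank cases — those with $b_2(Y)=b_2(X)+1$, giving $A_1$ monodromy — I expect the quadric $Q\subset\PP^4$ (item (1) of Theorem~\ref{3folds}), the Veronese $v_2(\PP^3)$ (item (2)), and the blowup of $\PP^3$ in a point embedded by $|2H-E|$ (item (3)) to appear here, so this proposition is exactly where three of the four varieties in the main theorem get produced.

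The main obstacle will be the genuinely ad hoc nature of several entries: for $X=\PP^3$ embedded by $\Oo(2)$, for $X=$ the one-point blowup of $\PP^3$, and for the homogeneous $X=\PP^2\times\PP^2$, $\PP(T_{\PP^2})$, and the full flag variety, one must carefully describe a general $Y$, compute its Picard lattice and the restriction map $H_2(Y)\to H_2(X)$, and correctly identify the intersection form on the kernel — there is no single uniform fibration argument covering all of them, unlike the scroll and pencil-of-quadrics cases. In particular one must be careful that $Y$ is taken general enough to be smooth and that the relevant maps behave as expected (e.g. the kernel of $i_*$ might a priori be larger than the obvious geometric sublattice one writes down; this is why the rank computation via Betti numbers is done first and independently). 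A secondary subtlety is ensuring, in the $E$-type cases if any occur, that the index argument actually closes — here one may need the refinement in Remark~\ref{newnote} to pass between a convenient but non-saturated lattice and the true root lattice. Everything else is bookkeeping of known Betti numbers and small determinant computations, which I would organize into the single summary Table~\ref{table:Fano2}.
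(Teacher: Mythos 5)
Your approach is essentially the paper's: for each entry of the table one computes the rank of $\ev(Y)$ as $b_2(Y)-b_2(X)$ and then identifies the simply laced irreducible type from the discriminant $[L^\vee:L]$ of $\Ker(i_*)$ with the negated intersection form --- the paper does exactly this for $V_3,V_4,V_5$ (obtaining indices $3,4,5$, hence $E_6,D_5,A_4$) and disposes of $V_6,V'_6,V_7$ by citing the earlier scroll and pencil-of-quadrics propositions, which your direct lattice computation would simply replace (rank $1$ or $2$ already forces $A_1$ or $A_2$ there). Three slips in your write-up are worth correcting, though none breaks the method: a degree-$d$ Del Pezzo surface has $b_2=10-d$, not $11-d$; $\PP^2\times\PP^2$ is a fourfold and does not belong on the list; and the quadric $Q\subset\PP^4$ is \emph{not} a Del Pezzo threefold (its index is $3$, $\omega_Q\cong\Oo_Q(-3)$) --- the three $A_1$ entries produced by this proposition are $v_2(\PP^3)$, the one-point blowup of $\PP^3$, and $\PP^1\times\PP^1\times\PP^1$, i.e.\ items (2)--(4) of Theorem~\ref{3folds}, while the quadric is item (5) of Sommese's list and is treated separately in Section~\ref{sec:odds&ends}.
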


\begin{note}
As Manin indicated in~\cite[Section 23.13]{Manin}, the monodromy group
for the smooth cubic $V_3\subset\PP^4$ was first computed by Todd
in~\cite{Todd}.
\end{note}

\begin{table}
  \caption{Smooth embedded Del Pezzo threefolds}\label{table:Fano2}
  \begin{tabular}{|l|p{.5\textwidth}|c|}
    \hline
    Variety & Description & Root system\\
    \hline
    $V_3\subset \PP^4$ & Cubic & $E_6$\\
    \hline
    $V_4\subset\PP^5$& Complete intersection of two quadrics & $D_5$\\
    \hline
   $V_5\subset\PP^6$ & Codimension~$3$ linear section of the
    Grassmannian $G(2,5)\subset\PP^9$ in the Pl\"ucker embedding &
    $A_4$\\
    \hline
    $V_6\subset\PP^7$ & Segre variety $\PP^1\times\PP^1\times\PP^1$ &
    $A_1$\\
    \hline
    $V'_6\subset\PP^7$ & $\PP^*(\T_{\PP^2})$, where $\T_{\PP^2}$ is
    the tangent bundle, embedded by the complete
    linear system $|\Oo_{V'_6|\PP^2}(1)|$  & $A_2$\\
    \hline
    $V_7\subset\PP^8$ & $\PP^*(\Oo_{\PP^2}(1)\oplus \Oo_{\PP^2}(2))$
    embedded by the complete 
    linear system $|\Oo_{V_7|\PP^2}(1)|$, or an isomorphic projection
    of this variety  & $A_1$\\
    \hline
    $V_8\subset\PP^9$& Veronese variety $v_2(\PP^3)$  or an isomorphic
    projection     of this variety& $A_1$\\
    \hline
  \end{tabular}
\end{table}

\begin{proof}[Proof of Proposition~\ref{ntl:DelPezzo3folds}]
It is obvious that the vanishing root system is $A_1$ for the
variety~$V_8$ in Table~\ref{table:Fano2}. The results about vanishing root
systems of $V'_6$ and~$V_7$ follow from
Proposition~\ref{NTL:P^1-bundles}, and the result about the vanishing
root system of~$V_6$ follows from Proposition~\ref{PoQ:A_1}. It
remains to find this root system for $V_3$, $V_4$, and~$V_5$. 

If $3\le m\le 5$, then it follows from the Lefschetz hyperplane
theorem that $H_2(V_m,\Z)\cong \Z$, that the class of any line
$\ell\subset V_m$ is a generator if $H_2(V_m,\Z)$, and that if
$C\subset V_m$ is a curve, then $[C]=\deg C\cdot[\ell]\in H_2(V_m,\Z)$
(brackets mean ``the homology class'').

If $Y\subset V_m$ is a smooth hyperplane section, then $Y$ is isomorphic
to $\PP^2$ blown up at $9-m$ points; let $\sigma\colon Y\to\PP^2$ be
the corresponding blowdown. If \lst\ell {9-m} are the
exceptional curves of~$\sigma$, which are lines on $Y$, let
$l_i\in H_2(Y,\Z)$ be the class of $\ell_i$. If $H\subset\PP^2$ is a line
and $h\in H_2(Y,\Z)$ is the
class of $\sigma^*H$, then $H_2(Y,\Z)$ is the free abelian
group with basis $(h,\lst l{9-m})$. If the line $H\subset\PP^2$ does not
pass through the $9-m$ points that are blown up, then
$\sigma^{-1}(H)\subset Y\subset\PP^N$ is a twisted cubic. Hence, if
$i\colon Y\to X$ is the embedding, one has $i_*[h]=3i_*[\ell_j]\in H_2(V_m,\Z)$
for $1\le j\le 9-m$. Therefore,
\begin{multline*}
  \Ker(i_*\colon H_2(Y,\Z)\to H_2(V_m,\Z))\\
    {}=\{ah+c_1l_1+\ldots+c_{r}l_{r}\colon
c_1+\ldots+c_{r}+3a=0\},  
\end{multline*}
where $r=9-m$. 
Put $\tilde l_i=l_i-\frac13h$.
Then the root lattice
$L\subset H_2(Y,Q)$ of the vanishing root system~$R$, with the pairing
$(\cdot,\cdot)$ on $H_2(Y,\Q)$ from Section~\ref{subseq:pairings}, is
isomorphic to
\begin{equation*}
L=\left\{c_1\tilde{l}_1+\ldots +c_{r}\tilde{l}_r\colon c_j\in\Z,\ \sum c_j\equiv
0\pmod 3\right\}. 
\end{equation*}
If we put $\lambda_1=3\tilde l_1$ and $\lambda_j=\tilde l_1-\tilde l_j$ for
$2\le j\le r$, then $(\lst\lambda r)$ is a \Z-basis
of the lattice~$L$. Since $(l_i,l_j)=\delta_{ij}$, $(h,l_j)=0$ and
$(h,h)=-1$ (recall that the pairing $(\cdot,\cdot)$ from
Section~\ref{subseq:pairings} is the negated intersection index), one
has
\begin{equation*}
(\lambda_j,\lambda_k)=
  \begin{cases}
    8,&j=k=1,\\
    3,&j=1,\ k>1,\\
    2,&j=k>1,\\
    1&j>1,\ k>1,\ j\ne k.
  \end{cases}
\end{equation*}
An easy induction shows that 
\begin{equation}\label{eq:det}
\det
  \begin{vmatrix}
a&b&b&\ldots&b\\
b&c&d&\ldots&d\\
b&d&c&\ldots&d\\
\hdotsfor5\\
b&d&\ldots&d&c    
  \end{vmatrix}=(c-d)^{r-2}(a(c+(r-2)d)-(r-1)b^2),
\end{equation}
where the matrix in the left-hand side is $(r\times r)$. 
Substituting
$(a,b,c,d)=(8,3,2,1)$ in~\eqref{eq:det}, one obtains
$[L^\vee:L]=\det\|(\lambda_j,\lambda_k)\|_{1\le j,k\le m}=9-r=m$.
(Actually, we need only the cases $r=9-m$, where $m=3,4,5$, and the
three corresponding determinants can be evaluated by hand.)  It
remains to observe that if $R$ is a simply laced irreducible root
system of rank $9-m$, $3\le m\le5$, for which the index of the root
lattice in the weight lattice equals $m$, then $R=E_6$ for $m=3$,
$R=D_5$ for $m=4$, and $R=A_4$ for $m=5$.
\end{proof}

\begin{note}
In Chapter~IV of his book~\cite{Manin}, Manin associated a root system
of rank~$r=9-m$ to any Del Pezzo surface of degree~$m\le6$. In this
remark we compare these results with
Proposition~\ref{ntl:DelPezzo3folds}.

If $X\subset\PP^N$ is a Del Pezzo threefold and $Y\subset X$ is its
smooth hyperplane section, then $Y$ is a Del Pezzo surface. Since
$h^1(\Oo_Y)=h^2(\Oo_Y)=0$, one can identify $\Pic(Y)$ with
$H^2(Y,\Z)$, and one can identify $H^2(Y,\Z)$ with $H_2(Y,\Z)$ via
Poincar\'e duality. Since $\Ev(Y)$ is the orthogonal complement to
$i^*H^2(X,\Q)$, where $i\colon Y\hookrightarrow X$ is the embedding,
and since $i^*\Oo_X(1)=\omega_Y^{-1}$, we see that, if one uses the
identification above, all the elements of the vanishing root system
$R\subset \ev(Y)$ are identified with elements $l\in\Pic(Y)\cong
H^2(Y,\Z)$ such that $(l,\omega_Y)=0$ and $(l,l)=-2$ (here, we mean by
$(\cdot,\cdot)$ the standard intersection index on $\Pic(Y)$, without
negation). Thus, the vanishing root system is contained in Manin's
root system $R_r$. Observe that if $3\le m\le 5$, then the vanishing
root system \emph{equals} Manin's $R_r$. Indeed, Theorem~25.4
of~\cite{Manin} asserts that $R_r$ is $E_6$ if $r=6$, $D_5$ if $r=5$,
and $A_4$ if $r=4$. As we observed above, vanishing root system $R$ is
contained in $R_r$, and Proposition~\ref{ntl:DelPezzo3folds} implies
that the number of roots in $R$ and~$R_r$ is the same. Thus, $R$
coincides with Manin's $R_r=R_{9-m}$ if $3\le m\le5$.

For both Del Pezzo threefolds~$X$ of degree~$6$, the vanishing root
system differs from Manin's root system $A_1\times A_2$. The reason is
that in this case $i^*H^2(X,\Q)$, to which all the roots of the
vanishing root system must be orthogonal, is not spanned by the class
of $\omega_Y$, so the space of vanishing cycles is smaller than the
orthogonal complement to the class of~$\omega_Y$.
\end{note}

\subsection{Odds and ends}\label{sec:odds&ends}
Going through Sommese's list, it remains to consider the varieties in
categories~(5), (6), and (7) (Table~\ref{table:Sommese's.list}).

For the smooth quadric in~$\PP^4$ (item~(5)) it is obvious
that the vanishing root system is~$A_1$. Now we account for
items~(6) and (7).

\begin{proposition}\label{prop:odds&ends}
If $X\subset\PP^N$ is a variety in the category~\textup{(6)}
or~\textup{(7)} of Sommese's list
\textup(Table~\ref{table:Sommese's.list}\textup), then the vanishing
root system of~$X$ is~$D_5$.
\end{proposition}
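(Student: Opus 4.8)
The plan is to reduce both cases (6) and (7) to a single computation of the kernel lattice $\Ker(i_*\colon H_2(Y,\Z)\to H_2(X,\Z))$ for a general smooth hyperplane section $Y\subset X$, exactly in the style of the previous propositions. For case (6), where $X=v_2(Q)$ with $Q\subset\PP^4$ a smooth threefold quadric (or an isomorphic projection), note first that the embedding is by $|\Oo_Q(2)|$, so a general hyperplane section $Y\subset X$ is isomorphic to a quadric section $Y_0=Q\cap Q'$ of $Q$, i.e.\ a $(2,2)$ complete intersection surface in $\PP^4$, which is a Del Pezzo surface of degree $4$, hence $\PP^2$ blown up at $5$ points. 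Since $H_2(Q,\Z)\cong\Z$ (by Lefschetz, as $\dim Q=3$), generated by the class of a line $\ell\subset Q$, and since $i^*\Oo_X(1)=\Oo_Y(2)=\omega_Y^{-1}$, the general-point computation is formally identical to the Del Pezzo case with $m=4$: writing $Y\cong\PP^2$ blown up at $r=9-m=5$ points with exceptional classes $l_1,\dots,l_5$ and $h=\sigma^*(\text{line})$, a line on $\PP^2$ through none of the blown-up points maps to a conic on $Q$ (because the curve has degree $2$ in the $\Oo_Q(2)$-embedding pushed to $\PP^4$... more precisely its image in $Q\subset\PP^4$ is a conic), so $i_*h=2i_*l_j$ and the kernel lattice is the index-considerations lattice
\begin{equation*}
L=\Bigl\{ah+\textstyle\sum_{j=1}^5 c_jl_j\colon 2a+\sum c_j=0\Bigr\}.
\end{equation*}
Passing to $\tilde l_j=l_j-\tfrac12 h$ with $(\tilde l_i,\tilde l_j)=\delta_{ij}$, this becomes the $D_5$ root lattice $\{\sum c_j\tilde l_j\colon \sum c_j\equiv 0\pmod 2\}$ of rank $5$; by the same discriminant argument as in Proposition~\ref{NTL:ord-quadr-pencils} (or directly from \cite[Planche IV]{LIE}) one gets $[L^\vee:L]=4$, and the only simply laced irreducible root system of rank $5$ with that index is $D_5$.

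For case (7), where $X$ is the blowup $\sigma\colon X\to Q$ of the threefold quadric at $k\ge 1$ points $p_1,\dots,p_k$, embedded by $|\,2\sigma^*H-E_1-\dots-E_k\,|$, I would first show that a general smooth hyperplane section $Y\subset X$ is still a Del Pezzo surface of degree $4$ and in fact isomorphic to the $Y$ of case (6). Indeed, the hyperplane class on $X$ is $2\sigma^*H-\sum E_i$, and $\omega_X=\sigma^*\omega_Q+\sum E_i=-3\sigma^*H+\sum E_i$, so $\omega_X(1)=\omega_X+2\sigma^*H-\sum E_i=-\sigma^*H$... wait, that gives $\omega_X\otimes\Oo_X(1)=\Oo_X(-\sigma^*H)$, consistent with $h^0(\omega_X(1))=0$; by adjunction $\omega_Y=(\omega_X(1))|_Y$ is the restriction of $\sigma^*\Oo_Q(-1)$, so $\omega_Y^{-1}$ is ample and $Y$ is again a degree-$4$ Del Pezzo surface, hence $\PP^2$ blown up at $5$ points. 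The key geometric point to pin down is the relation $i_*f = 2\,i_*(\text{exceptional line class})$ (or its analogue) in $H_2(X,\Z)$: here $H_2(X,\Z)\cong\Z^{k+1}$, generated by $\sigma^*\ell$ and the line classes $\lambda_1,\dots,\lambda_k$ inside the exceptional $\PP^2$'s $E_i\cong\PP^2$. A general hyperplane section $Y$ meets each $E_i$ in a conic (since the hyperplane class restricted to $E_i\cong\PP^2$ is $\Oo_{\PP^2}(1)$... actually $(2\sigma^*H-\sum E_j)|_{E_i}=-E_i|_{E_i}=\Oo_{\PP^2}(1)$, so $Y\cap E_i$ is a line in $E_i$) — so I must be careful: $Y$ meets each exceptional $\PP^2$ in a line, and $Y$ also contains the strict transforms of the lines of $Q$ through the $p_i$ that lie on the chosen quadric section. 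The cleanest route is: the twisted-cubic curves on $Y$ (strict transforms under $\sigma$ of hyperplane sections of $Q$ through one point... ) give the needed multiplicities. Rather than disentangle this by hand, I would argue more uniformly: since $X$ is subordinate to $v_2(Q)$, the blowdown chain of Definition~\ref{def:subordinate} identifies general hyperplane sections compatibly, so $\ev(Y)$ and the bilinear form on it — hence the vanishing root system — are unchanged along the chain. This is the same principle already used implicitly (a variety subordinate to a Veronese pencil is a Veronese pencil, with the same root system by Remark~\ref{newnote}). Concretely, projecting $X$ from a point $a\in X$ not on any exceptional divisor realizes $X'$ as $X$ with one more point blown up, replacing a general hyperplane section $Y$ of $X$ by an isomorphic $Y'$ (an additional $(-1)$-line is created but it lies in the new fiber, not affecting $\ker i_*$ after the standard bookkeeping, exactly as in the proof of Proposition~\ref{ntl:ver.pencils} where the extra $l_j$'s are absorbed); so $\ev$ and the root system coincide with those of $v_2(Q)$, namely $D_5$.

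The main obstacle I anticipate is the bookkeeping in case (7): one must verify that blowing up more points of $Q$ (and correspondingly adjusting the hyperplane class to $2\sigma^*H-\sum E_i$) does not enlarge the space of vanishing cycles, i.e.\ that the extra exceptional lines on $Y$ become homologically dependent (indeed equal, up to the $\sigma^*\ell$ class) in $H_2(X,\Z)$ after pushforward. Equivalently, one needs $b_2(Y)-b_2(X)$ to stay equal to $5$ as $k$ grows, which amounts to the numerical fact that each blowup adds exactly one to both $b_2(Y)$ and $b_2(X)$. I would establish this either by the Leray/Mayer–Vietoris computation of $b_2$ as in Lemma~\ref{v3:b_2(X)} and Proposition~\ref{ntl:ver.pencils}, or — more slickly — by the subordination argument above, which reduces everything to the single honest computation carried out for $v_2(Q)$ in case (6). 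Once $\dim\ev(Y)=5$ and $[L^\vee:L]=4$ are in hand, the identification $R=D_5$ is immediate from the classification of simply laced root systems, as in the preceding proofs.
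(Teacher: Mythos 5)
Your reduction of case (7) to case (6) is essentially the paper's argument: blowing up the points $a_j$ adds classes $[\ell_j]$ to $H_2(X,\Z)$ and $[\ell'_j]$ to $H_2(Y,\Z)$ with $i'_*[\ell'_j]=[\ell_j]$, so $\Ker i'_*\cong\Ker i_*$ compatibly with the bilinear form and the vanishing root system is unchanged; the paper records exactly this via the commutative square relating $\sigma$, $\tau$, $i$, $i'$. The problem is in case (6), where your key pushforward relation is wrong. Writing $Y\cong F$, a degree-$4$ Del Pezzo surface ($\PP^2$ blown up at $5$ points), the curve with class $h$ is the strict transform of a general line of $\PP^2$, hence has degree $h\cdot(3h-\sum l_i)=3$ on $Q\subset\PP^4$: it is a twisted cubic, not a conic. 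The exceptional curves are lines of $Q$. Since $H_2(Q,\Z)\cong\Z$ is generated by the class of a line, the correct relation is $i_*h=3\,i_*l_j$, and the kernel lattice is $\{ah+\sum c_jl_j\colon 3a+\sum c_j=0\}$, not $\{2a+\sum c_j=0\}$.

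This is not a harmless slip. Your lattice contains $h-l_1-l_2$, which has norm $(h,h)+2=1$ for the negated intersection form (recall $(h,h)=-1$ here), so it is not the $D_5$ root lattice and indeed not a root lattice at all; a direct Gram-determinant computation shows your lattice is unimodular of rank $5$, which matches no simply laced irreducible root system. Moreover the substitution $\tilde l_j=l_j-\tfrac12h$ does not orthonormalize anything in this setting: unlike the fiber class $f$ of Proposition~\ref{NTL:ord-quadr-pencils}, which is isotropic, here $(h,h)\ne0$ and one gets $(\tilde l_i,\tilde l_j)=\delta_{ij}-\tfrac14$. The paper avoids all of this by observing that the correct lattice $\{3a+\sum c_j=0\}$ is exactly the one already analyzed in the proof of Proposition~\ref{ntl:DelPezzo3folds}: formula~\eqref{eq:det} with $(a,b,c,d)=(8,3,2,1)$ and $r=5$ gives $[L^\vee:L]=9-5=4$, and the unique rank-$5$ simply laced irreducible root system with discriminant $4$ is $D_5$. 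So the stated conclusion is right, but your derivation of it needs the pushforward relation corrected and the discriminant recomputed along these lines.
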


\begin{proof}
Suppose first that $(X,\Oo_X(1))\cong (Q,\Oo_Q(2))$, where $Q$ is
the smooth three-dimensional quadric.  Let $Y\subset X$ be a smooth hyperplane
section. Then $(Y,\Oo_Y(1))\cong (F,\Oo_F(-2K_F))$, where $F$ is a Del
Pezzo surface of degree~$4$; we identify $Y$ with~$F$.

Let $\sigma\colon F\to\PP^2$ be a standard blowdown of five
exceptional curves, let $\lst C5\subset F$ be these curves (observe
that they are conics on $Y\subset X$), and let $l_i$, $1\le i\le 5$,
be the class of $C_i$ in $H_2(F,\Z)$. If $L\subset\PP^2$ is a line and
$h\in H_2(F,\Z)$ is the class of $\sigma^*L$, then
$H_2(Y,\Z)=H_2(F,\Z)$ is the free abelian group with the basis
$(h,\lst l5)$.

Since $X$ is isomorphic to the three-dimensional quadric,
$H_2(X,\Z)\cong\Z$ and this group is generated by the class of any
conic $C\subset X$; if $C'\subset X$ is an arbitrary curve, then
$[C']=(\deg C'/2)[C]\in H_2(X,\Z)$ (brackets mean ``the homology
class''). In particular, if $i_*\colon H_2(Y,\Z)\to H_2(X,\Z)$ is the
canonical surjection, then $i_*(h)=3[C]$. Hence, 
\begin{equation*}
  \Ker i_*=\{ah+c_1l_1+\ldots+c_5l_5\colon
c_1+\ldots+c_5+3a=0\},  
\end{equation*}
where $(l_i,l_j)=\delta_{ij}$, $(l_i,h)=0$, and $(h,h)=-1$. In the
proof of Proposition~\ref{ntl:DelPezzo3folds} we showed that this
implies that the vanishing root system is~$D_5$.

Now suppose that a variety~$X'$ belongs to the category~(7) in
Table~\ref{table:Sommese's.list}; then $X'$ is isomorphic to the
blowup of $Q$ at $k$ points \lst ak. 
If $Y'\subset X'$ is a smooth hyperplane
section, then $Y'$ is isomorphic to the blowup of $Y$ at~\lst ak, where
$Y\subset X$ is a smooth hyperplane section of~$X$ containing~\lst ak. Let
$\sigma\colon X'\to X$, $\tau\colon Y'\to Y$ be the corresponding
blowdown morphisms.

If $E_j=\sigma^{-1}(a_j)\subset X'$, $1\le j\le k$, are the
exceptional divisors and $\ell_j=\tau^{-1}(a_j)\subset Y'$, then
\begin{align*}
H_2(X',\Z)&\cong H_2(X,\Z)\oplus \Z[\ell_1]\oplus\dots\oplus\Z[\ell_k],\\
 H_2(Y',\Z)&\cong H_2(Y,\Z)\oplus \Z[\ell_1']\oplus\dots\oplus\Z[\ell_k'],
\end{align*}
where we denoted by $[\ell_j]$ and
$[\ell_j']$ the class of $\ell_j$ in $H_2(X',\Z)$ and in
$H_2(Y',\Z)$. 

Since $\sigma_*([\ell_j])=0$,
$\tau_*([\ell_j'])=0$ and $i'_*[\ell_j']=[\ell_j]$ for all~$j$, where
$i'\colon Y'\hookrightarrow X'$ is the embedding, it is clear from
the diagram
\begin{equation*}
  \xymatrix{
{Y'}\ar@{^{(}->}[r]^{i'}\ar[d]^\tau&{X'}\ar[d]^\sigma\\
Y\ar@{^{(}->}[r]^i&X
}
\end{equation*}
that
\begin{equation*}
\Ker(i'_*\colon H_2(Y',\Z)\to H_2(X',\Z))\cong  
\Ker(i_*\colon H_2(Y,\Z)\to H_2(X,\Z))
\end{equation*}
and that this isomorphism respects the from~$(\cdot,\cdot)$. Thus, the
vanishing root system of $X'$ is isomorphic to that of~$X$.
\end{proof}

Putting together Propositions~\ref{sommeses_list}, \ref{c_2=2},
\ref{PoQ:A_1}, \ref{ntl:ver.pencils}, \ref{ntl:DelPezzo3folds},
and~\ref{prop:odds&ends}, one obtains a proof of
Theorem~\ref{3folds}.

\end{document}